\numberwithin{equation}{section}
\numberwithin{figure}{section}
\newtheorem{theorem}{Theorem}[section]
\newtheorem{lemma}[theorem]{Lemma}
\theoremstyle{definition}
\newtheorem{definition}[theorem]{Definition}
\newtheorem{example}[theorem]{Example}
\theoremstyle{remark}
\newtheorem{remark}[theorem]{Remark}
\numberwithin{equation}{section}
\theoremstyle{plain}
\newtheorem{corollary}[theorem]{Corollary}
\newtheorem{observation}[theorem]{Observation}
\newtheorem{proposition}[theorem]{Proposition}
\newcommand{\R}{\mathbb{R}}
\begin{document}

\title[Conformal diffeomorphisms of GQE manifolds]{Conformal diffeomorphisms of gradient Ricci solitons and generalized quasi-Einstein manifolds}
\author{Jeffrey L. Jauregui}
\address{209 S. 33rd Street,
David Rittenhouse Lab,
Dept. of Math,
Philadelphia, PA 19104.}
\email{jauregui@math.upenn.edu}
\urladdr{http://www.math.upenn.edu/~jjau}
\author{William Wylie}
\address{215 Carnegie Building\\
Dept. of Math, Syracuse University\\
Syracuse, NY, 13244.}
\email{wwylie@syr.edu}
\urladdr{http://wwylie.mysite.syr.edu}
\thanks{The second author was supported in part by NSF-DMS grant 0905527. }

\begin{abstract}
In this paper we extend some well-known rigidity results for conformal changes of Einstein metrics to 
the class of generalized quasi-Einstein (GQE) metrics, which includes gradient Ricci solitons.  In order to do so, we introduce the 
notions of conformal diffeomorphisms and vector fields  that preserve a GQE structure.   We show that 
a complete GQE metric admits a structure-preserving, non-homothetic complete conformal vector field   
if and only if it is a round sphere.  We also classify the structure-preserving conformal diffeomorphisms.  
In the compact case, if a GQE metric admits a structure-preserving, non-homothetic conformal diffeomorphism,  then the metric is conformal to the sphere, and isometric to the sphere in the case of a 
gradient Ricci soliton.  In the complete case, the only  structure-preserving non-homothetic  conformal diffeomorphisms from
a shrinking or steady gradient Ricci  soliton to another soliton are the conformal transformations of spheres and stereographic projection. 
\end{abstract}

\subjclass{53C25}

\maketitle

\section{Introduction}
It is well-known that the Einstein condition on a Riemannian manifold  is not conformally invariant.  In  the 1920s 
Brinkmann \cite{Brinkmann} classified when two Einstein metrics are conformal to each other and  
Yano--Nagano \cite{YanoNagano}  later proved that if a complete  Einstein metric admits a complete conformal field then it is 
a round sphere.  For further results in this direction  see \cite{Nagano}, \cite{Lichnerowicz} and pp. 
309--311 of \cite{KN}.  For the pseudo-Riemannian case and many more references, see \cite{KR09}.   

In this paper we show that these results have natural extensions to the class of  \emph{generalized quasi-Einstein} (GQE) metrics, that is, Riemannian metrics $g$ on a manifold $M$ of dimension $n \geq 3$ satisfying
\begin{equation}
\label{eqn_GQE}
\mathrm{Ric} + \mathrm{Hess} f + \alpha df \otimes df = \lambda g
\end{equation}
for some smooth functions $f, \alpha, \lambda$ on $M$, where $\mathrm{Ric}$ and $\mathrm{Hess}$ are the Ricci
curvature and Hessian with respect to $g$.  GQE manifolds\footnote{We note that this class of metrics differs from 
the K\"{a}hler generalized quasi-Einstein metrics of Guan \cite{Guan} and the generalized quasi-Einstein 
metrics of Chaki \cite{Chaki}.} were recently introduced by Catino \cite{Catino}, 
who proved a local classification of GQE metrics
with divergence-free Weyl tensor.  GQE metrics generalize:
\begin{itemize}
 \item Einstein metrics: $\mathrm{Ric} = \lambda g$ where $\lambda \in \mathbb{R}$,
 \item gradient Ricci solitons: $\mathrm{Ric} + \mathrm{Hess} f = \lambda g $, where $\lambda \in \mathbb{R}$,
 \item gradient Ricci \emph{almost} solitons: $\mathrm{Ric} + \mathrm{Hess} f = \lambda g $, where $\lambda \in C^{\infty}(M)$, 
 introduced by Pigola--Rigoli--Rimoldi--Setti \cite{PRRS}, and 
 \item $m$-quasi-Einstein metrics: $\alpha = - \frac{1}{m}$ for a positive integer $m$ and $\lambda  \in \mathbb{R}$,
introduced by Case--Shu--Wei \cite{CSW}; these include the static metrics when $m=1$.
\end{itemize}

We will consider diffeomorphisms  between GQE manifolds  that preserve the structure in the following sense. 

\begin{definition} A diffeomorphism  $\phi$ from a GQE manifold $(M_1, g_1, f_1, \alpha_1, \lambda_1)$
to a GQE manifold $(M_2, g_2, f_2, \alpha_2, \lambda_2)$ is said to \emph{preserve the GQE structure} if
$\phi^* \alpha_2 = \alpha_1$ and $\phi^* df_2 = df_1$.   A vector field $V$  on a GQE manifold  \emph{preserves the GQE structure} if $D_V \alpha = 0$ and
$D_V f$ is constant, or equivalently, if the local flows of $V$  preserve the GQE structure.\end{definition}

A \emph{conformal diffeomorphism} $\phi$  between Riemannian manifolds $(M_1,g_1)$ and $(M_2,g_2)$   is a diffeomorphism such that  
\[ \phi^*g_2 = w^{-2} g_1\]
 for some function $w>0$ on $M_1$.  A \emph{conformal vector field} on a Riemannian manifold
$(M,g)$ is a vector field whose local flows are conformal diffeomorphisms; equivalently, $V$ satisfies 
 \[ L_V g = 2 \sigma g \]  for some function $\sigma$ on $M$, where $L$ is the Lie derivative.
 
 A trivial example of a conformal diffeomorphism  that preserves the GQE structure is any 
homothetic rescaling ($\phi$ = identity, $g_2=c^2 g_1$).
We will say a conformal diffeomorphism is \emph{non-homothetic} if  $w$ is not constant.  Similarly, a conformal vector field is non-homothetic if $\sigma$ is not constant.   

We show that  conformal diffeomorphisms and vector fields that preserve a 
GQE structure only exist in very rigid situations.  Our most general result is the following classification theorem for non-homothetic conformal transformations that preserve 
a generalized quasi-Einstein structure.   This result holds in both the local and global settings.
\begin{theorem}
\label{thm_local_classification}
Let $\phi$ be a non-homothetic, structure-preserving conformal diffeomorphism between GQE manifolds $(M_1,g_1,f_1,\alpha_1,\lambda_1)$
and $(M_2,g_2,f_2,\alpha_2,\lambda_2)$ of dimension $n \geq 3$.
Then, about points where $\alpha_i \neq \frac{1}{n-2}$, $g_1$ and $g_2$ are both of the form
\begin{equation}
\label{eqn_g_f(t)}
g_i = ds^2 + v_i(s)^{2} g_{N},
\end{equation}
where $(N,g_N)$ is an $(n-1)$-manifold  independent of $s$ and $f_i=f_i(s)$, or 
\begin{equation}
\label{eqn_g_f(x)}
g_i = e^{\frac{2f_i}{n-2}}\left( ds^2 + v_i(s)^{2} g_{N} \right) ,
\end{equation}
where $(N,g_{N})$ is an $(n-1)$-manifold independent of $s$ and 
$f_i$ is a function on $N$. 
If either $g_1$ or $g_2$ is complete and $\alpha_i\neq \frac{1}{n-2}$, then 
the metrics are globally either of the form (\ref{eqn_g_f(t)}) or (\ref{eqn_g_f(x)}). Moreover, 
if $n \geq 4$ or  $\alpha$ is constant in case (\ref{eqn_g_f(t)}), then $g_N$ is Einstein;
in case (\ref{eqn_g_f(x)}), $g_N$ is conformal to a GQE manifold with potential $f_i$. 
Finally, only (\ref{eqn_g_f(t)}) is possible if $n=3$.
\end{theorem}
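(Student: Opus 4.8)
The plan is to distill from the two GQE equations a single second--order identity relating the conformal factor $w$ and the common potential $f$, and then feed it into the classical structure theory of metrics carrying a concircular scalar field. First I pull $g_2,f_2,\alpha_2,\lambda_2$ back to $M_1$ via $\phi$; since $\phi^*df_2=df_1$ we may take $f:=f_1=f_2\circ\phi$, and set $\bar g:=\phi^*g_2=w^{-2}g_1$, $\alpha:=\alpha_1=\alpha_2\circ\phi$, $\bar\lambda:=\lambda_2\circ\phi$. Then $(g_1,f,\alpha,\lambda_1)$ and $(\bar g,f,\alpha,\bar\lambda)$ are two GQE structures on $M_1$ with the \emph{same} potential and the \emph{same} $\alpha$, so subtracting their defining equations cancels the $\alpha\,df\otimes df$ terms. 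Inserting the standard conformal transformation formulas for $\mathrm{Ric}$ and $\mathrm{Hess}$ under $\bar g=w^{-2}g_1$ and clearing the factor $w$, every pure--trace contribution is absorbed into a single function, leaving
\[
(n-2)\,\mathrm{Hess}_{g_1}w+dw\otimes df+df\otimes dw=\mu\,g_1
\]
for some $\mu\in C^{\infty}(M_1)$ (explicitly computable, but only its existence is used).

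Next I would pass to the rescaled metric $\hat g:=e^{-2f/(n-2)}g_1$. Applying the conformal transformation law for the Hessian with conformal factor $-f/(n-2)$ to $w$ and combining with the identity above, one finds that $\mathrm{Hess}_{\hat g}w$ is a pointwise multiple of $\hat g$; since $w$ is non--constant ($\phi$ is non--homothetic), $w$ is a nontrivial concircular scalar field for $\hat g$. This is precisely where $\alpha\neq\tfrac1{n-2}$ is needed: if $\alpha=\tfrac1{n-2}$, the GQE equation instead says $\hat g$ is Einstein and one is back in the classical Brinkmann setting. Granting $\alpha\neq\tfrac1{n-2}$, the structure theory for concircular fields (cf.\ \cite{Brinkmann}) gives, near any point with $dw\neq0$, a warped splitting $\hat g=ds^2+v(s)^2g_N$ with $w=w(s)$ and $g_N$ the induced metric on a level set of $w$, hence $s$--independent. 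Therefore $g_1=e^{2f/(n-2)}(ds^2+v(s)^2g_N)$, and since $\bar g=w(s)^{-2}g_1$, the same splitting (with the same $g_N$) holds for $\bar g$, hence for $g_2$ after reparametrizing $s$ and transporting by $\phi$.

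To separate the two cases I would translate the GQE equation of $g_1$ into the metric $\hat g$: a short conformal computation shows $\mathrm{Ric}_{\hat g}+(\alpha-\tfrac1{n-2})\,df\otimes df$ is proportional to $\hat g$. Its mixed $(\partial_s,X)$--component, for $X$ tangent to $N$, gives $(\alpha-\tfrac1{n-2})f_s\,\partial_Xf=0$ since warped--product Ricci tensors have no mixed part. As $\alpha\neq\tfrac1{n-2}$, at each point either $f_s=0$, so $f$ does not depend on $s$, which is the form (\ref{eqn_g_f(x)}); or $df$ is tangent to $\partial_s$, so $f=f(s)$ and the substitution $d\tilde s=e^{f(s)/(n-2)}\,ds$ absorbs the conformal factor, giving the form (\ref{eqn_g_f(t)}). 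A continuity argument shows exactly one alternative holds near each point where $dw\neq0$; if $g_1$ or $g_2$ is complete, a closer analysis of the level sets of $w$ and of the critical set of the concircular field upgrades this to a global statement.

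Finally, I would read off the tangential $(X,Y)$--components. In case (\ref{eqn_g_f(t)}) they force $\mathrm{Ric}_{g_N}=h\,g_N$; as $\mathrm{Ric}_{g_N}$ depends only on $N$, so does $h$, whence $h$ is constant by Schur's lemma when $\dim N=n-1\geq3$ (that is, $n\geq4$), while for $n=3$ one instead uses that $\alpha$ constant makes the $(\partial_s,\partial_s)$--equation depend only on $s$, forcing $h$ and hence $g_N$ to be Einstein. In case (\ref{eqn_g_f(x)}) the tangential components give $\mathrm{Ric}_{g_N}+(\alpha-\tfrac1{n-2})\,df\otimes df$ proportional to $g_N$ with $\alpha,f$ functions on $N$, and a further conformal change of $g_N$ by a function of $f$ supplies the missing $\mathrm{Hess}\,f$, presenting $g_N$ as conformal to a GQE manifold with potential $f$. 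When $n=3$, $\dim N=2$ forces $df\otimes df$ to be proportional to $g_N$, which is impossible unless $df=0$ since $\alpha\neq1=\tfrac1{n-2}$; so $f$ is constant and case (\ref{eqn_g_f(x)}) collapses onto (\ref{eqn_g_f(t)}), leaving (\ref{eqn_g_f(t)}) as the only possibility. I expect the main obstacle to be the passage from the local to the global normal form in the complete case --- controlling the critical set of $w$ and excluding a transition between the two forms --- along with the bookkeeping in the degenerate strata (points where $\nabla w$ or $\nabla f$ vanishes, and the case $n=3$); the computational heart, deriving the identity above and noticing that $e^{-2f/(n-2)}$ renders $w$ concircular, is short once that factor is guessed.
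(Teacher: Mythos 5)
Your computational core is sound and lands on the same key lemma as the paper, by a mildly different bookkeeping: you subtract the two GQE equations on $g_1$ directly and then rescale the resulting Hessian identity by $e^{-2f/(n-2)}$, whereas the paper first converts each GQE structure into the equivalent condition $\mathrm{Ric}_{h_i}=\left(\tfrac1{n-2}-\alpha_i\right)df_i\otimes df_i+Q_ih_i$ on $h_i=e^{-2f_i/(n-2)}g_i$ (Proposition \ref{prop_Ric_h}) and then compares $\mathrm{Ric}_{\phi^*h_2}$ with $\mathrm{Ric}_{h_1}$. Either way one concludes that the conformal factor is concircular for $h_1$, and your identification of the mixed component $\left(\tfrac1{n-2}-\alpha\right)f_s\,\partial_Xf=0$ as the source of the $f=f(t)$ versus $f=f(x)$ dichotomy is exactly the paper's mechanism (phrased there as an eigenvector statement for $\mathrm{Ric}_h$). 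The fiber equations, the Schur argument for $n\ge4$, the $\alpha$-constant case, and the rank argument killing case (\ref{eqn_g_f(x)}) when $n=3$ all match.

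However, two steps you defer to ``a continuity argument'' and ``a closer analysis'' are genuinely the hard part, and as written they are gaps. First, the mixed-component identity only gives a \emph{pointwise} alternative ($f_s=0$ or $\partial_Xf=0$), and a priori the manifold could be partitioned into interlocking regions of each type; the paper needs the full eigenstructure analysis of $\mathrm{Ric}_{g_N}$ (Cases A--D of Lemma \ref{lemma_local_form_f}, in particular the delicate Case D at boundary points of $\{df\ne0\}$) to get one alternative on a whole neighborhood, and then a separate open-closed argument (Lemma \ref{lemma_ft_fx}, via the property $(*)$) to show the two cases cannot coexist on a connected manifold unless $f$ is constant. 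Second, and more seriously, the global statement cannot be obtained by citing the classical concircular/Tashiro theory, because that theory requires completeness of the metric carrying the concircular function --- here $h=e^{-2f/(n-2)}g$, which is generally incomplete even when $g$ is complete. The paper has to prove bespoke completeness lemmas (Lemmas \ref{lemma_f(t)_global} and \ref{lemma_f(x)_global}) that exploit the specific form of the conformal factor ($f=f(t)$ makes the $u$-geodesics reparametrized $g$-geodesics; $f=f(x)$ makes $f$ constant along them) to show the $u$-geodesics extend indefinitely. Without an argument of this kind, your plan establishes the local normal form but not the global one claimed in the theorem.
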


\begin{remark} If $\alpha_1 \equiv \frac{1}{n-2}$, then $g_1$ is conformal to an Einstein 
metric (see Proposition \ref{prop_Ric_h}); these spaces fall into the Einstein case studied by Brinkmann \cite{Brinkmann}.\end{remark}

\begin{remark} When $f$ is constant, cases (\ref{eqn_g_f(t)}) and (\ref{eqn_g_f(x)}) are the same. The metric $g_2$ need not be complete if $g_1$ 
is, even in the Einstein case: stereographic projection provides a counterexample.
\end{remark}

\begin{remark} We give examples in sections \ref{sec_f(t)} and \ref{sec_f(x)} showing both cases
in Theorem \ref{thm_local_classification} may occur. We also show that the two cases do not occur on the same connected 
manifold unless $f$ is constant.
\end{remark}

In the compact case, we further obtain the following. 
\begin{theorem}
\label{thm_compact_case}
Let $\phi$ be a non-homothetic, structure-preserving conformal diffeomorphism between compact GQE manifolds $(M_1,g_1,f_1,\alpha_1,\lambda_1)$
and $(M_2,g_2,f_2,\alpha_2,\lambda_2)$.
Then $(M_i, g_i)$ are conformally diffeomorphic to the standard round metric on $S^n$.  Moreover, if $\alpha_1 \neq \frac{1}{n-2}$, 
then $(M_i,g_i)$ are rotationally-symmetric. 
\end{theorem}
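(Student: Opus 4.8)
The plan is to split into the case where $\alpha_1$ is identically $\tfrac{1}{n-2}$ and the case where it is not, and in each case to reduce the statement to a classical rigidity result for Einstein metrics via Theorem \ref{thm_local_classification}.

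\emph{The case $\alpha_1\equiv\tfrac1{n-2}$.} Since $\phi$ preserves the GQE structure we have $\phi^*\alpha_2=\alpha_1$, so $\alpha_2\equiv\tfrac1{n-2}$ as well, and by Proposition \ref{prop_Ric_h} each $g_i$ is conformal to an Einstein metric $h_i$ through a conformal factor depending only on $f_i$ (explicitly $h_i=e^{kf_i}g_i$ for a dimensional constant $k$). First I would check that $\phi$ is itself a conformal diffeomorphism between the compact Einstein manifolds $(M_1,h_1)$ and $(M_2,h_2)$: from $h_i=e^{kf_i}g_i$ and $\phi^*g_2=w^{-2}g_1$ one computes $\phi^*h_2=e^{k(\phi^*f_2-f_1)}w^{-2}h_1$, and $\phi^*f_2-f_1$ is constant because $\phi^*df_2=df_1$ and $M_1$ is connected, so $\phi^*h_2$ is a constant multiple of $w^{-2}h_1$. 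As $w$ is non-constant, this conformal diffeomorphism of compact Einstein manifolds is non-homothetic, so the classical theorem of Brinkmann \cite{Brinkmann} on conformal changes of Einstein metrics, together with compactness, forces $(M_i,h_i)$ to be round spheres; hence each $g_i$ lies in the conformal class of the standard $S^n$. (No rotational symmetry is asserted in this case, consistent with the statement.)

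\emph{The case $\alpha_1\not\equiv\tfrac1{n-2}$.} Compactness gives completeness, so Theorem \ref{thm_local_classification} applies globally and each $g_i$ has the warped form (\ref{eqn_g_f(t)}) with $f_i=f_i(s)$, or the conformally warped form (\ref{eqn_g_f(x)}) with $f_i$ a function on $N$. In either case I would analyze the topology of the closed manifold $M_i$. Up to the conformal factor, $s$ is the arclength parameter along the gradient flow of a non-constant function of $s$ alone — namely $f_i$ when $f_i$ is nonconstant, and otherwise the conformal factor $w$ — which, attaining its extrema on the closed manifold $M_i$, forces the range of $s$ to be a bounded interval $[s_0,s_1]$. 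At an endpoint the warping function $v_i$ cannot remain positive: otherwise $\{s=s_0\}$ would be a boundary component, impossible since $M_i$ is closed, or $s$ would be periodic and $M_i$ would be diffeomorphic to $S^1\times N$, which carries no metric conformal to $S^n$. Hence $v_i(s_0)=v_i(s_1)=0$; smoothness of $g_i$ at these zeros forces $v_i'=\pm1$ there and $(N,g_N)$ to be the unit round sphere $S^{n-1}$, so $(M_i,g_i)$ is rotationally symmetric. In case (\ref{eqn_g_f(x)}) the collapse of $N$ at a pole additionally forces the $N$-function $f_i$ to be constant, whereupon (\ref{eqn_g_f(x)}) reduces to (\ref{eqn_g_f(t)}) by the remark following Theorem \ref{thm_local_classification}. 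Finally $ds^2+v_i(s)^2 g_{S^{n-1}}=v_i^2\big(dt^2+g_{S^{n-1}}\big)$ after the substitution $dt=v_i^{-1}\,ds$, and the cylinder metric $dt^2+g_{S^{n-1}}$ is conformal to the round $S^n$; hence $(M_i,g_i)$ is conformally diffeomorphic to the round sphere, completing both assertions.

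\emph{The main obstacle.} The delicate step is the topological analysis just sketched: ruling out the periodic ($S^1\times N$) behaviour of $s$ and controlling the geometry as $v_i\to0$. This needs the precise second-order ODE satisfied by $v_i$ (and by the conformal factor $w$) obtained by inserting the warped ansatz into (\ref{eqn_GQE}) and using the relation between $w$ and $f_i$; as in Brinkmann's treatment of conformally Einstein metrics, the bounded non-constant solutions of this ODE on a closed manifold are of trigonometric type and necessarily have two zeros, which is exactly what produces the two poles and the spherical warping. A secondary technicality is the mixed situation in which $\alpha_1=\tfrac1{n-2}$ on a proper nonempty subset, which is handled by running Theorem \ref{thm_local_classification} on the open set where $\alpha_1\neq\tfrac1{n-2}$ and propagating the conclusion by connectedness. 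Throughout, the argument applies symmetrically to $g_1$ and $g_2$.
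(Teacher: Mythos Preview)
Your approach differs from the paper's and contains a gap. The paper does not split on whether $\alpha_1\equiv\tfrac{1}{n-2}$; instead it works uniformly with the auxiliary metric $h_1=e^{-2f_1/(n-2)}g_1$. By Lemma~\ref{lemma_local_form_h} the conformal factor $u$ relating $h_1$ and $\phi^*h_2$ satisfies $\mathrm{Hess}_{h_1}u=\tfrac{\Delta u}{n}\,h_1$, and since $M_1$ is compact (so $h_1$ is complete), Tashiro's theorem immediately forces $h_1$ to be a type~(III) warped product---a rotationally symmetric metric on $S^n$. Then $g_1$, being conformal to $h_1$, lies in the round conformal class. For rotational symmetry of $g_1$ itself when $\alpha_1\neq\tfrac{1}{n-2}$, compactness gives $u$ a critical point, so Lemma~\ref{lemma_ft_fx} yields $f_1=f_1(t)$, and $g_1=e^{2f_1(t)/(n-2)}h_1$ is therefore rotationally symmetric as well.

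The gap in your argument is the step ruling out the periodic case. You assert that $S^1\times N$ ``carries no metric conformal to $S^n$,'' which is true but cannot be invoked here: you have not yet established that $M_i$ is conformal to $S^n$---that is precisely what you are proving. Nothing in the bare warped-product form $g_i=ds^2+v_i(s)^2g_N$ supplied by Theorem~\ref{thm_local_classification} prevents $s\in S^1$ with $v_i>0$ throughout; the statement of that theorem does not control the range of $s$ or the endpoint behaviour of $v_i$. The missing ingredient is exactly the non-constant function $u$ (equivalently $w$, up to a multiplicative constant) satisfying the Hessian equation on $h_1$: on a compact manifold it must have critical points, and at such a point Lemma~\ref{lemma_hessian}(2) gives polar coordinates, forcing the warping function to vanish. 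Once you invoke $u$ in this way you are carrying out the paper's argument via Tashiro, which also dispenses with the ODE analysis you flag as the ``main obstacle'' and with your separate collapse argument for case~(\ref{eqn_g_f(x)}) (Lemma~\ref{lemma_ft_fx} already shows that case is incompatible with $u$ having a critical point).
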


The case of conformal fields exhibits greater rigidity than the case of discrete conformal changes.  For instance, we prove:
\begin{theorem}
\label{thm_compact_case_conformal_fields}
Suppose $(M,g,f,\alpha,\lambda)$ is a complete GQE manifold, with $\alpha \neq \frac{1}{n-2}$, that admits a structure-preserving non-homothetic 
conformal field: $L_V g = 2\sigma g$.  If $\sigma$ has a critical point (e.g., if $M$ is compact),
then $f$ is constant and $(M,g)$ is isometric to a simply-connected space form.
\end{theorem}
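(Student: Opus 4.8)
The plan is to run the local flow of $V$ through Theorem~\ref{thm_local_classification}, reduce to a warped-product metric, pin down the conformal factor, and then use an ODE--uniqueness argument — driven by completeness and by the critical point of $\sigma$ — to force $f$ constant and the metric to be a space form. First, the local flow $\{\phi_t\}$ of $V$ consists of structure-preserving conformal self-diffeomorphisms of $(M,g,f,\alpha,\lambda)$, and since $L_V g = 2\sigma g$ with $\sigma$ nonconstant, the conformal factor relating $\phi_t^*g$ to $g$ is nonconstant for small $t\ne0$, so these $\phi_t$ are non-homothetic. As $g$ is complete with $\alpha\ne\tfrac1{n-2}$, Theorem~\ref{thm_local_classification} puts $g$ globally in the form (\ref{eqn_g_f(t)}) or (\ref{eqn_g_f(x)}). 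I carry out the argument in case (\ref{eqn_g_f(t)}); case (\ref{eqn_g_f(x)}) is treated by a parallel analysis (or reduced to case (\ref{eqn_g_f(t)}) once $f$ is known to be constant, the two then coinciding by the remark after Theorem~\ref{thm_local_classification}). Thus $g=ds^2+v(s)^2g_N$ with $f=f(s)$. Using that $V$ preserves $df$ with $D_Vf$ constant, and that the structure-preserving condition forces $\alpha$ constant (so $g_N$ is Einstein), and decomposing the conformal equation into its base and fiber parts, one obtains $V=c\,v(s)\partial_s+W$ with $W$ a Killing field of $g$, $c\ne0$, and $\sigma=c\,v'$; in particular the critical points of $\sigma$ are exactly the zeros of $v''$.

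Next, suppose $f$ is nonconstant. Then $D_Vf=c\,v\,f'$ is a nonzero constant (if it vanished, $f'\equiv0$ since $v>0$, $c\ne0$), so $f'$ is nowhere zero, $v$ never vanishes, and $s$ ranges over all of $\R$. Substituting $f=f(s)$ with $f'$ proportional to $1/v$ into the two components of (\ref{eqn_GQE}) and eliminating $\lambda$ yields a second-order ODE $v\,v''=Q(v')$, where $Q$ is a fixed quadratic polynomial built from $\alpha$, $c\,v\,f'$, and the Einstein constant of $g_N$. At a critical point $s_0$ of $\sigma$ we have $v''(s_0)=0$, hence $Q(v'(s_0))=0$, so the affine function $\ell(s)=v'(s_0)(s-s_0)+v(s_0)$ is a solution of the same ODE with the same $1$-jet at $s_0$. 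By uniqueness for $v''=Q(v')/v$ on $\{v>0\}$, $v=\ell$ on the maximal interval where $\ell>0$; if $v'(s_0)\ne0$ that interval has a finite endpoint at which $v$ would have to vanish, contradicting $v>0$ on $\R$, and if $v'(s_0)=0$ then $v\equiv v(s_0)$, so $v''\equiv0$ and $\sigma$ is constant — a contradiction either way. Hence $f$ is constant.

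With $f$ constant, (\ref{eqn_GQE}) reads $\mathrm{Ric}=\lambda g$, so by Schur's lemma ($n\ge3$) $g$ is Einstein, $\mathrm{Ric}=\Lambda g$, and in the warped product $v''=-\tfrac{\Lambda}{n-1}v$. At a critical point $s_0$ of $\sigma=c\,v'$ we again have $v''(s_0)=0$; if $v(s_0)>0$ this forces $\Lambda=0$, so $v$ is affine and $\sigma=c\,v'$ is constant, which is excluded. So every critical point of $\sigma$ occurs where $v=0$, i.e.\ at a pole, where smoothness forces $g_N=g_{S^{n-1}}$ and $|v'|=1$; solving $v''=-\tfrac{\Lambda}{n-1}v$ with these data gives $v=\tfrac1\omega\sin(\omega s)$ on $[0,\pi/\omega]$ when $\Lambda>0$ (so $(M,g)\cong S^n$), $v=s$ on $[0,\infty)$ when $\Lambda=0$ (excluded, as $\sigma$ is then constant), or $v=\tfrac1\omega\sinh(\omega s)$ on $[0,\infty)$ when $\Lambda<0$ (so $(M,g)\cong\mathbb{H}^n$). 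Thus $(M,g)$ is isometric to a simply-connected space form. The step I expect to be the main obstacle is identifying the conformal field on the warped product — showing $\sigma=c\,v'$ depends on $s$ alone with $\sigma'$ vanishing exactly where $v''$ does, equivalently that the fiber component $W$ of $V$ is Killing (the degenerate possibility of a Ricci-flat $g_N$ with a homothetic $W$ must be ruled out using the critical point of $\sigma$), together with carrying the reduction through case (\ref{eqn_g_f(x)}), where the auxiliary metric $h:=e^{-2f/(n-2)}g$ need not be complete. Once $\sigma=c\,v'$ is available, the completeness-driven ODE-uniqueness step is the crux, and is short.
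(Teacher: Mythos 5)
Your overall strategy (push the flow of $V$ through the classification theorem, reduce to $g=ds^2+v(s)^2g_N$, and run an ODE--uniqueness argument on the warping function) is viable, but as written there are two genuine gaps, both of which you flag but neither of which you close. First, the structural claim $V=c\,v(s)\partial_s+W$ with $W$ Killing and $\sigma=cv'$ is not the general form of a structure-preserving conformal field in the $f=f(t)$ case: the paper's Lemma \ref{Lemma_ConfField_Local} and Corollary \ref{cor_fields_integrated_eqns} show the general form is $v_0=\sigma'(A+\omega r)$ with $V_t=V_0$ a fixed \emph{homothetic} field of expansion factor $\omega$, and your ansatz is exactly the subcase $\omega=0$. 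This subcase can be forced here — the critical point of $\sigma$ puts you in polar coordinates, so the fiber is a round $S^{n-1}$, which admits no non-Killing homothetic field — but that argument has to be made, and it in turn already presupposes the resolution of the second gap. Second, your treatment of case (\ref{eqn_g_f(x)}) is circular: you propose to reduce it to case (\ref{eqn_g_f(t)}) ``once $f$ is known to be constant,'' which is the very thing being proved. The correct resolution is that the hypothesis kills this case outright: by Observation \ref{sigma_u} together with Lemma \ref{lemma_local_form_f}(1), a critical point of $\sigma$ forces polar coordinates with $f=f(t)$ there, and Lemma \ref{lemma_ft_fx} then shows that $f=f(x)$ with $f$ non-constant is incompatible with $\sigma$ having any critical point. (A third, minor, point: applying Theorem \ref{thm_local_classification} directly to the flow of $V$ needs the care of Observation \ref{sigma_u}, since the flows are only locally defined; the paper handles this by showing $\sigma$ itself satisfies (\ref{eqn_hessian}).)

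It is also worth knowing that the paper's own proof bypasses all of the ODE machinery. At the critical point $p$ of $\sigma$, the metric $h$ has polar coordinates with $f=f(t)$, so smoothness forces $df(p)=0$; since $D_Vf$ is constant it is therefore identically zero; and Proposition \ref{lemma_c=0} then gives $f$ constant (if $f'\neq 0$ on an interval, $D_Vf=v_0f'=0$ forces $v_0=0$ there, hence $\sigma=\partial_t v_0=0$ on an open set, contradicting the isolatedness of the zeros of $\sigma$). The polar neighborhood covers $M$ up to a point, so $f$ is constant globally, $g$ is Einstein, and a complete rotationally symmetric Einstein metric is a simply connected space form. Your ODE--uniqueness step is correct where it applies (and your final trichotomy $\sin$/affine/$\sinh$ is fine), but note that once the structural claim is in place one finds $c\,v''=e^{-2f/(n-2)}v$ by comparing the two descriptions of $\nabla\sigma$, so the zeros of $v''$ coincide with the zeros of $v$; the ``interior critical point with $v>0$'' that your uniqueness argument targets never actually occurs, and the contradiction with ``$v$ never vanishes'' is immediate. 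In short: the skeleton is sound and genuinely different from the paper's, but the two deferred steps are exactly where the content lies, and one of your proposed fixes is circular.
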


Moreover, the round sphere is the only possibility if   the conformal field is assumed to be complete,
generalizing Yano--Nagano's result.
\begin{theorem} \label{thm_complete_case_conformal_fields} If a complete GQE manifold $(M,g,f,\alpha,\lambda)$  with $\alpha \neq \frac{1}{n-2}$ admits a non-homothetic 
complete conformal field $V$ preserving the GQE structure then $f$ is constant and $(M,g)$ is isometric to a round sphere. 
\end{theorem}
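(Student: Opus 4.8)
The plan is to reduce the statement to Theorem~\ref{thm_compact_case_conformal_fields}. Write $L_V g = 2\sigma g$. I claim it suffices to produce a critical point of $\sigma$. Indeed, given such a critical point, Theorem~\ref{thm_compact_case_conformal_fields} applies (the manifold is complete, $\alpha \neq \frac{1}{n-2}$, and $V$ is non-homothetic and structure-preserving) and shows that $f$ is constant and $(M,g)$ is isometric to a simply-connected space form, hence to a round sphere, to Euclidean space $\mathbb{R}^n$, or to a hyperbolic space. The last two are then ruled out, since neither carries a complete non-homothetic conformal field: the conformal group of $\mathbb{H}^n$ ($n \geq 3$) equals its isometry group, and on $\mathbb{R}^n$ every complete conformal field is homothetic, because the special conformal transformations have finite escape time. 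Thus $(M,g)$ is a round sphere, and everything comes down to finding a critical point of $\sigma$.

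To this end, suppose for contradiction that $\nabla\sigma$ vanishes nowhere. Each local flow of $V$ is a non-homothetic, structure-preserving conformal diffeomorphism of $(M,g)$, so Theorem~\ref{thm_local_classification}, completeness, and $\alpha \neq \frac{1}{n-2}$ imply that $g$ is globally of the form \eqref{eqn_g_f(t)} or \eqref{eqn_g_f(x)}, with $s$ ranging over an open interval $I \subseteq \mathbb{R}$ and with the warping function $v$, the factor $\sigma$, and (in case \eqref{eqn_g_f(t)}) $f$ depending on $s$ alone. Since $\sigma = \sigma(s)$ with $\nabla\sigma$ nowhere zero, $\sigma'(s) \neq 0$, so $\sigma$ is strictly monotone on $I$; moreover, as the $\partial_s$-lines admit no smooth caps (a cap would be a critical point of $\sigma$), completeness of $(M,g)$ forces $I = \mathbb{R}$. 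A computation from $L_V g = 2\sigma g$ then pins down the flow of $V$: it moves points along the $\partial_s$-lines, the projected motion being governed by $\dot s = \kappa v(s)$ for a nonzero constant $\kappa$, and $\sigma$ equals $\kappa v'$ up to an additive constant (zero in case \eqref{eqn_g_f(t)}, equal to $\frac{1}{n-2}D_V f$ in case \eqref{eqn_g_f(x)}). In particular $\sigma' = \kappa v''$ is nowhere zero, so $v$ is strictly convex or strictly concave on all of $\mathbb{R}$; positivity of $v$ rules out the concave case, so $v$ is strictly convex.

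The crux — and the step I expect to be the main obstacle — is to derive a contradiction from completeness of the conformal field $V$ itself, not merely of $(M,g)$. Completeness of $V$ forces the scalar equation $\dot s = \kappa v(s)$ to be complete on $\mathbb{R}$, i.e.\ $\int^{\pm\infty} ds/v(s) = \infty$; since $v$ is convex, this fails whenever $v$ grows sufficiently fast at an end, giving a contradiction there. The borderline cases (roughly linear growth of $v$ at an end) are not excluded by the metric alone and must be eliminated using the GQE equations: structure-preservation gives $D_V\alpha = 0$, so $\alpha$ is constant in the $\partial_s$-direction, whence by Theorem~\ref{thm_local_classification} $g_N$ is Einstein in case \eqref{eqn_g_f(t)} and conformal to a GQE manifold with potential $f$ in case \eqref{eqn_g_f(x)}; then \eqref{eqn_GQE} reduces to an explicit ODE system in $s$ for $v$, $f$ (with $D_V f = \kappa v f'$ constant), and $\lambda$, and a short case analysis shows that no complete solution of this system has $v''$ of a single sign on all of $\mathbb{R}$. (In the sub-case $D_V f = 0$ one can shortcut: then $f$ is constant and $(M,g)$ is complete Einstein, so by Yano--Nagano \cite{YanoNagano} the existence of a complete non-homothetic conformal field already forces $(M,g)$ to be the round sphere, whose conformal factors $\sigma$ do have critical points.) This contradicts the existence of such a $v$, so $\sigma$ must have a critical point, and the reduction of the first paragraph completes the proof.
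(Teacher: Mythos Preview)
Your reduction to Theorem~\ref{thm_compact_case_conformal_fields} in the first paragraph is correct and is exactly how the paper begins. The gap is in the second and third paragraphs, specifically the assertion that ``a computation from $L_V g = 2\sigma g$ then pins down the flow of $V$: \ldots the projected motion being governed by $\dot s = \kappa v(s)$ for a nonzero constant $\kappa$.'' This is false in general. The conformal-field equations on a one-dimensional warped product (Proposition~\ref{prop_conf_fields_app}) only give $V = v_0(s,x)\,\partial_s + V_s$ with $V_s$ conformal on the fiber; one has $v_0 = \kappa v(s)$ precisely when the fiber-conformal factor $\omega$ vanishes, and there is no a priori reason for that. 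In the $f=f(t)$ case with $\omega\neq 0$ the paper obtains $v_0 = \sigma'(t)\bigl(A+\omega\!\int dt/\sigma'\bigr)$ (Lemma~\ref{Lemma_ConfField_Local} and the equations~(\ref{eqn_sigma_r})--(\ref{eqn_v_r})), which after passing to $s$-coordinates gives $v_0^g/v(s) = A+\omega r$, nonconstant. In the $f=f(x)$ case the fiber component $V_t$ can even be a genuinely non-homothetic conformal field varying with $t$ (Proposition~\ref{prop_f(x)_cases}, case~(1), realized in Example~\ref{ex_varying_conf_field}), so again $v_0$ is not a multiple of the warping function. Your subsequent convexity argument for $v$ and the ODE $\dot s = \kappa v(s)$ therefore rests on an unproved and incorrect structural claim.

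The paper avoids this by never attempting to normalize $V$ so simply. In the $f=f(t)$ case it classifies all possibilities for $(\sigma,v_0,f)$ via the integrated equations (section~\ref{sec_conf_fields_f(t)}) and checks directly that the $\partial_t$-component $v_0(t)\partial_t$ is incomplete in every complete example (Lemma~\ref{lemma_completeness}), which is enough since the fiber component is separately complete. In the $f=f(x)$ case it does not analyze $V$ at all; instead it uses that $\sigma'$ solves the linear ODE $\sigma''' = -\frac{Q}{n-1}\sigma'$, lists the nowhere-vanishing global solutions $1,\ e^{\kappa t},\ \cosh(\kappa t)$, rules out $\sigma'\equiv 1$, and then observes that $W=\frac{Q}{n-1}V+\nabla\sigma$ is homothetic for $g$, hence complete, forcing $\nabla\sigma$ itself to be a complete field on $\mathbb{R}$---which $e^{\kappa t}\partial_t$ and $\cosh(\kappa t)\partial_t$ are not. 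Your outline could be repaired by importing this machinery, but as written the ``computation'' you invoke does not exist.
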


In fact, we obtain a full local classification without the completeness assumption on $V$ nor $g$. There are several 
examples; we delay further discussion to section \ref{sec_conf_fields}. 

We also obtain more rigidity in the case of a gradient Ricci soliton (i.e. $\alpha=0$ and $\lambda$ is constant).  
A gradient Ricci soliton  $(M,g,f)$ is called \emph{shrinking}, \emph{steady},  or \emph{expanding} depending on whether  $\lambda>0$, $\lambda = 0$, $\lambda <0$ respectively.   Combining our results with some other well-known results for gradient Ricci solitons gives us  the  following theorem. 
\begin{theorem} 
\label{thm_solitons}
Let $\phi$ be a non-homothetic, structure-preserving conformal diffeomorphism between 
GQE manifolds $(M_1,g_1,f_1,0,\lambda_1)$ and $(M_2,g_2,f_2,0,\lambda_2)$,
and assume $(M_1,g_1,f_1)$ is a complete gradient Ricci soliton. Then $g_1$ and $g_2$ are both metrics of the form 
(\ref{eqn_g_f(t)}), and:
\begin{itemize} 
\item If $M_1$ is compact, then $g_1$ and $g_2$ are both round metrics on the sphere. 
\item  If $(M_1, g_1, f_1)$ is either shrinking or steady, then it is a round metric on the sphere, the flat metric on 
$\mathbb{R}^n$, the Bryant soliton, or a product $\mathbb{R} \times N$ where $N$ is Einstein with Einstein constant $\lambda$.
\item   If, in addition, $(M_2, g_2, f_2)$ is a soliton,
then either $(M_i,g_i)$ are round metrics on the sphere or $\phi$ is a stereographic projection with $(M_1,g_1)$ flat Euclidean 
space and  $(M_2,g_2)$ a round spherical metric with a point removed.
\item  If $(M,g,f)$ is a complete gradient Ricci soliton admitting a non-homothetic conformal field that preserves the structure, 
then $(M,g)$ is Einstein and $f$ is constant. 
\end{itemize}
\end{theorem}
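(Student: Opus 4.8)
The plan is to first reduce the general statement to case \eqref{eqn_g_f(t)}, and then to dispatch the four assertions by combining Theorems \ref{thm_local_classification}, \ref{thm_compact_case}, \ref{thm_compact_case_conformal_fields} and \ref{thm_complete_case_conformal_fields} with standard structure results for gradient Ricci solitons. Since $\alpha_1 \equiv \alpha_2 \equiv 0$ and $0 \neq \tfrac{1}{n-2}$ for every $n \geq 3$, and $g_1$ is complete, Theorem \ref{thm_local_classification} applies: $g_1$ and $g_2$ are globally of the form \eqref{eqn_g_f(t)} or \eqref{eqn_g_f(x)}, over a common fiber $(N,g_N)$. If $n=3$ only \eqref{eqn_g_f(t)} occurs, so suppose $n \geq 4$ and that $g_1$ has the form \eqref{eqn_g_f(x)} with $f_1$ nonconstant (when $f_1$ is constant the two forms agree). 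Then $\bar{g}_1 := e^{-2f_1/(n-2)}g_1 = ds^2 + v_1(s)^2 g_N$ with $f_1$ a function on $N$; substituting into $\mathrm{Ric} + \mathrm{Hess}\,f_1 = \lambda_1 g_1$ and using the conformal transformation laws for $\mathrm{Ric}$ and $\mathrm{Hess}$ (cf.\ Proposition \ref{prop_Ric_h}) gives
\[
\mathrm{Ric}_{\bar{g}_1} - \tfrac{1}{n-2}\, df_1 \otimes df_1 = \Bigl( \lambda_1 e^{2f_1/(n-2)} + \tfrac{1}{n-2}\Delta_{\bar{g}_1} f_1 \Bigr)\, \bar{g}_1 .
\]
As $f_1$, hence $e^{2f_1/(n-2)}$ and $\Delta_{g_N} f_1$, depends only on $N$ while $v_1$ depends only on $s$, comparing the $(\partial_s, \partial_s)$-components forces $\lambda_1 = 0$ or $v_1$ constant; substituting back into the $N$-components forces $v_1$ affine and $\Delta_{g_N} f_1 \equiv 0$, and completeness of $g_1$ then forces $f_1$ constant — a contradiction. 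Hence $g_1$ and $g_2$ are both of the form \eqref{eqn_g_f(t)}, and $g_N$ is Einstein because $\alpha \equiv 0$ is constant.

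Write $g_i = ds^2 + v_i(s)^2 g_N$, $f_i = f_i(s)$, with $g_N$ Einstein. If $M_1$ is compact, Theorem \ref{thm_compact_case} shows $(M_i, g_i)$ are rotationally symmetric and conformal to the round $S^n$, so in particular $N = S^{n-1}$ with a round metric. A compact rotationally symmetric gradient Ricci soliton is a round sphere (a standard consequence of soliton theory: Hamilton--Ivey--Perelman in low dimensions, and direct analysis of the warped-product soliton equations in general), so $g_1$ is round and $f_1$ constant; then $\phi^* df_2 = df_1 = 0$ forces $f_2$ constant, so $g_2$ is Einstein and hence, being compact and rotationally symmetric, round as well. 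This is the first bullet. If instead $(M_1, g_1, f_1)$ is shrinking or steady, it is a complete warped-product soliton over an interval with Einstein fiber; if $v_1$ is nonconstant then the metric must close up smoothly, forcing $N = S^{n-1}$, and the classification of complete rotationally symmetric shrinking and steady solitons (Bryant, Kotschwar) gives the round $S^n$, the Gaussian soliton on flat $\mathbb{R}^n$, or the Bryant soliton; if $v_1$ is constant, $g_1 = \mathbb{R} \times N$ with $N$ Einstein of constant $\lambda$. This is the second bullet.

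For the third bullet, $g_1$ and $g_2$ are both gradient Ricci solitons of the form \eqref{eqn_g_f(t)} over the same fiber $N$, conformally identified by the nonhomothetic $\phi$; matching the rotationally symmetric structures, $\phi$ carries $s$-curves to $s$-curves and acts on $N$ by a homothety, so the conformal factor $w$ is a function of $s$ alone. Imposing the soliton equations on both metrics, together with $\phi^* df_2 = df_1$ and Hamilton's identity $R + |\nabla f|^2 - 2\lambda f = \mathrm{const}$ on each, reduces to an overdetermined system of ODEs; running through the possibilities for $(M_1, g_1, f_1)$ from the second bullet (and the analogous list of complete rotationally symmetric expanding solitons), the only nonhomothetic solutions are $g_1, g_2$ both round metrics on $S^n$ related by a conformal transformation of the sphere, and $g_1$ flat Euclidean space with $g_2$ the round $S^n$ minus a point and $\phi$ stereographic projection. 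For the last bullet, let $(M,g,f)$ be a complete gradient Ricci soliton with a nonhomothetic, structure-preserving conformal field $V$, $L_V g = 2\sigma g$. If $\sigma$ has a critical point (in particular if $M$ is compact), Theorem \ref{thm_compact_case_conformal_fields} gives $f$ constant and $(M,g)$ a space form, hence Einstein; if $V$ is complete, Theorem \ref{thm_complete_case_conformal_fields} gives the same. Otherwise one uses the local classification of GQE manifolds admitting such a field (section \ref{sec_conf_fields}) — all of the form \eqref{eqn_g_f(t)} with explicitly determined warping and potential — and notes that the soliton equation with constant $\lambda$ can hold only when $f$ is constant, so $g$ is Einstein.

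The main obstacle is the first step: ruling out form \eqref{eqn_g_f(x)} for a complete gradient Ricci soliton with nonconstant potential. The separation-of-variables computation settles it, but one must be careful with the completeness argument when $N$ is noncompact, and with the degenerate steady case $\lambda_1 = 0$. The other delicate point is the ODE analysis in the third bullet, where one must exclude, for instance, a conformal image of the Bryant soliton, or of a product $\mathbb{R}\times N$, being itself a soliton — here the constraint $\phi^* df_2 = df_1$ is what forces the collapse to the two listed cases.
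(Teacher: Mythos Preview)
Your overall architecture matches the paper's, but there are two genuine gaps.

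\textbf{Ruling out the $f=f(x)$ case.} Your separation-of-variables argument correctly reaches $\lambda_1=0$, $v_1$ affine, and $\Delta_{g_N} f_1=0$, and completeness (via Lemma~\ref{lemma_f(x)_global}) forces $v_1$ constant. But the final claim ``completeness of $g_1$ then forces $f_1$ constant'' is not justified: $f_1$ is merely a harmonic function on $(N,g_N)$, and $g_N$ need not even be complete (see the remark after Lemma~\ref{lemma_f(x)_global}), so no Liouville theorem applies. The paper closes this step differently (Proposition~\ref{prop_Q_f}): since $Q$ is constant in the $f=f(x)$ case, one uses Hamilton's identity $\Delta f - |\nabla f|^2 = -2\lambda f + c$ to rewrite $Q$ and deduce $\lambda_1=c=0$, whence $R_{g_1}+|\nabla f_1|^2_{g_1}=0$; Chen's theorem $R\ge 0$ for complete steady solitons then forces $\nabla f_1=0$. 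Your sketch cannot avoid this nonnegativity input, and you even flag the steady case as ``degenerate'' without resolving it.

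\textbf{The shrinking/steady classification.} You write that if $v_1$ is nonconstant ``the metric must close up smoothly, forcing $N=S^{n-1}$.'' This is false as stated: a complete type~(I) warped product $ds^2+v_1(s)^2 g_N$ over $s\in\mathbb{R}$ with $v_1>0$ nonconstant is perfectly possible in general. What the paper does (Proposition~\ref{prop_nosplitting}) is observe that a type~(I) metric contains a line, replace $g_N$ by a space form to make the soliton locally conformally flat, and then invoke the Chen--Zhang dichotomy (positive curvature operator or product) together with Toponogov's splitting theorem to conclude that a nonproduct must be type~(II) or~(III), hence rotationally symmetric. Only then do the Kotschwar and Bryant classifications apply. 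Your shortcut skips exactly the step that carries the content.

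The remaining two bullets are treated along the same lines as the paper (explicit case-by-case ODE analysis for the third; the section~\ref{sec_conf_fields} classification plus an explicit check that $\lambda$ is nonconstant for the fourth), and your outline there is adequate if somewhat telegraphic.
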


\begin{remark}
In the last case, note that complete Einstein metrics admitting non-homothetic conformal fields
were classified by Kanai \cite{Kanai}; we recall this classification in remark \ref{rmk_kanai}.
\end{remark}

\begin{remark} We also obtain that  $g_1$ and $g_2$  are both of the form (\ref{eqn_g_f(t)}) when $g_1$ is $m$-quasi-Einstein.  $m$-quasi-Einstein metrics of the form  (\ref{eqn_g_f(t)}) are found in   \cite{Boehm} (cf. \cite{HPW}).   Examples of complete expanding  gradient Ricci solitons of the form  (\ref{eqn_g_f(t)}) are found in  \cite{Chowetc}. 
\end{remark}

\begin{remark} Interesting results for some conformal changes of  K\"{a}hler Ricci solitons that do not preserve the GQE structure are  obtained in \cite{Maschler}. \end{remark}

The paper is organized as follows.  In section \ref{sec_warped_prod} we discuss warped product metrics with a one dimensional base.  The first observation, known to Brinkmann, is a characterization of these spaces as those admitting a non-constant solution to a certain PDE. The second observation is a duo of completeness lemmas for metrics conformal to a warped product, in which the conformal factor
is either a function on only the base or only the fiber.  Section \ref{sec_conf_changes} is the technical heart
of the paper.  We recast the GQE condition on $g$ in terms of an equivalent condition on a conformally rescaled metric $h$,
then establish a warped product structure on $h$.  Next, we understand the geometry of $g$ by demonstrating that
the conformal factor only depends on the fiber or the base, leading to two possible cases.  Finally, we prove
the global structure of $g$, arguing that both cases may not occur on a connected manifold.  In section \ref{sec_examples}
we give a variety of examples that demonstrate the sharpness of the classification theorems.  Section \ref{sec_conf_fields}
takes up the case in which a GQE manifold admits a structure-preserving conformal field, and section \ref{sec_solitons}
specializes our results to gradient Ricci solitons and $m$-quasi-Einstein manifolds.

\section{Warped Products over a one-dimensional base}
\label{sec_warped_prod}
In this preliminary section we recall the notion of warped products over a one-dimensional base and 
their characterization as the metrics that support a gradient conformal field.  This result has a long history: 
the local version goes back to Brinkmann \cite{Brinkmann}, and the global version was established in full generality in 
the  Riemannian case by Tashiro \cite{Tashiro}.  Tashiro's work  generalized a well-known characterization of the 
sphere due to Obata \cite{obata}.    

We require slightly  non-standard versions of these results where our metric is not complete, but is conformal to a 
complete metric by a conformal change of a certain form.  In Lemmas \ref{lemma_f(t)_global}
and \ref{lemma_f(x)_global}, we establish that Tashiro's proof can be 
extended to give a global warped product structure in these cases, a necessary step in our eventual
proof of Theorem \ref{thm_local_classification}.

\begin{definition}
\label{def_wp}
A \emph{warped product over a one-dimensional base} is a smooth manifold isometric to one of the following:
\begin{enumerate}
\item[(I)]  \[  \left( I \times N , h = dt^2 +  v(t)^2 g_N\right) \]
where $I$ is an open interval (possibly infinite), $v: I \to \R$ is smooth and positive, and $(N,g_N)$ is a Riemannian manifold,
\item [(II)]
\[ \left( B_R(0) \subset \mathbb{R}^n, h= dt^2 + v(t)^2 g_{S^{n-1}}\right), \]
where $B_R(0)$ is an open ball about the origin of radius $R \in (0,\infty]$,
$v: [0,R) \to \R$ is smooth, positive for $t>0$, with $v(0)=0$, and $g_{S^{n-1}}$ is a
round spherical metric, or
\item [(III)]
\[ \left(S^n, dt^2 + v(t)^2 g_{S^{n-1}}\right)\]
where $v: [0,R] \to \R$ is smooth, positive for $0 < t < R$, with $v(0)=v(R)=0$.
\end{enumerate}
\end{definition}

\begin{remark} In case (I) $g$ is complete if and only if $g_N$ is complete and $I=\R$.  Case (II) metrics
are complete if and only if $I= [0,\infty)$, and are rotationally-symmetric metrics on $\R^n$.  
Case (III) metrics are rotationally-symmetric metrics on $S^n$.
In cases (II) and (III) smoothness of the metric implies further boundary conditions 
on the derivatives of $v$ (see \cite{Petersen} for example).
\end{remark}

An important property of these spaces is they always support a gradient conformal vector field.
\begin{proposition}
\label{lemma_warped_prod_hess}
For an $n$-dimensional warped product metric over a one-dimensional base:
\[ h = dt^2 + v(t)^2 g_N, \]
any anti-derivative $u(t)$ of $v(t)$ satisfies the equation
\begin{equation}
\label{eqn_hessian} 
\frac{1}{2} L_{\nabla u} h = \mathrm{Hess}\, u = \frac{\Delta u}{n} h,
\end{equation}
where $\nabla$, $\mathrm{Hess}$, and $\Delta$ are the gradient, Hessian, and Laplacian with respect to $h$.
\end{proposition}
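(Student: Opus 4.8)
The plan is to reduce the statement to the pointwise identity $\mathrm{Hess}\,u = \frac{\Delta u}{n}h$ and verify it by a direct computation in warped-product coordinates. The first equality in (\ref{eqn_hessian}), namely $\frac12 L_{\nabla u}h = \mathrm{Hess}\,u$, is the standard fact that the symmetric part of the covariant derivative of an exact one-form is the Hessian of a potential, so nothing needs to be done there; moreover, once we know $\mathrm{Hess}\,u = \rho\,h$ for some function $\rho$, taking the trace with respect to $h$ gives $\Delta u = n\rho$. Thus the real content is that $\mathrm{Hess}\,u$ is a pointwise scalar multiple of $h$.

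To establish this, I would work on the open dense set where warped-product coordinates $(t,x^1,\dots,x^{n-1})$ are valid, with the $x^i$ coordinates on $N$, so that $h_{tt}=1$, $h_{ti}=0$, and $h_{ij}=v(t)^2(g_N)_{ij}$. Since $u=u(t)$ and $u'=v$, the $h$-gradient is $\nabla u = v(t)\,\partial_t$. The only covariant derivatives of $\partial_t$ needed are the classical warped-product identities $\nabla_{\partial_t}\partial_t=0$ (the $t$-curves being unit-speed geodesics) and $\nabla_{\partial_i}\partial_t=\frac{v'}{v}\,\partial_i$, both immediate from a short Christoffel-symbol computation using the block form of $h$. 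Since $v$ depends on $t$ alone, $\nabla_{\partial_t}(v\,\partial_t)=v'\,\partial_t$ and $\nabla_{\partial_i}(v\,\partial_t)=v\,\nabla_{\partial_i}\partial_t=v'\,\partial_i$, so $\nabla_X\nabla u=v'(t)\,X$ for every coordinate vector field $X$; that is, $\mathrm{Hess}\,u=v'(t)\,h$. Taking the trace gives $\Delta u=n\,v'(t)$, and (\ref{eqn_hessian}) follows on the chart.

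The only point requiring care is that in cases (II) and (III) of Definition \ref{def_wp} the coordinate $t$ degenerates at the pole(s), so one must check both that $u$ is a smooth function on the whole manifold and that the identity extends there. For the former, smoothness of $h$ at a pole forces $v$ to be odd in the distance variable (so $v(t)=t+O(t^3)$ near $t=0$), whence an antiderivative $u$ is, up to an additive constant, even there and extends smoothly across the pole; for the latter, both sides of $\mathrm{Hess}\,u=\frac{\Delta u}{n}h$ are smooth tensor fields agreeing on a dense open set, hence everywhere. I expect this boundary bookkeeping, rather than the computation itself, to be the only mild obstacle.
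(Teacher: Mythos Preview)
Your argument is correct. The paper does not actually supply a proof of this proposition; it is stated as a classical fact, with the surrounding discussion and references (to \cite{Brinkmann}, \cite{Tashiro}, and \cite{KR09}) focused on the converse direction in Lemma~\ref{lemma_hessian}. Your direct computation in warped-product coordinates---using $\nabla_{\partial_t}\partial_t=0$ and $\nabla_{\partial_i}\partial_t=\tfrac{v'}{v}\partial_i$ to obtain $\mathrm{Hess}\,u=v'(t)\,h$ and then tracing---is precisely the standard verification, and essentially the reverse of the calculation the paper carries out in its proof of Lemma~\ref{lemma_hessian}(1), where $\mathrm{Hess}\,u(\partial_t,\partial_t)=u''$ and $\Delta u=nu''$ are derived. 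Your remarks about smoothness at the poles in cases (II) and (III) are a nice bit of care that the paper leaves implicit.
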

The fundamental fact we will exploit  is that the converse is also true.  For the full proof and history of this result we refer the reader to Lemma 3.6  and Proposition 3.8 of  
\cite{KR09}, which also include pseudo-Riemannian versions and additional references. 

\begin{lemma}
\label{lemma_hessian}
Suppose that a non-constant function $u$ on a Riemannian manifold $(M,h)$ satisfies  (\ref{eqn_hessian}). 
Then the  critical points of $u$ are non-degenerate and isolated.  Fix $p \in M$.
\begin{enumerate}
\item If $|\nabla u(p)| \neq 0$, then in a neighborhood $U$ of $p$, $g$ is isometric to a 
warped product over a one-dimensional base of type (I): 
\[ (U, g) \cong \left( I \times N , dt^2 + u'(t)^2 g_N \right), \]
where $u=u(t)$, $u'(t) \neq 0$ and $(N, g_N)$ is some Riemannian $(n-1)$-manifold independent of $t$.  If $x$ denotes
coordinates on $N$, we say $(t,x)$ give \emph{rectangular} coordinates on $U$.
\item If $|\nabla u(p)|=0$ then there is a neighborhood $U$ of $p$ on which $g$ is isometric
to a warped product over a one-dimensional base of type (II), and $u$ is a function of only
the distance $t$ to $p$:
\[ (U, g) \cong \left(B_R(0), dt^2 + \left( \frac{u'(t)}{u''(0)}\right)^2 g_{S^{n-1}} \right), \]
where $u'(t) \neq 0$ for $t>0$.  If $x$ denotes coordinates on $S^{n-1}$, we say $(t,x)$ give \emph{polar} coordinates on $U$.
\end{enumerate}
\end{lemma}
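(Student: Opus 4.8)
The plan is to establish Lemma~\ref{lemma_hessian} by analyzing the function $u$ near a point $p$ according to whether $\nabla u(p)$ vanishes or not, using the equation $\mathrm{Hess}\,u = \frac{\Delta u}{n} h$ as the engine. First I would record the nondegeneracy of critical points: at a critical point $p$, the equation gives $\mathrm{Hess}\,u(p) = \frac{\Delta u(p)}{n} h$, so the Hessian is a scalar multiple of the metric; if that scalar were zero then differentiating the equation would show (by a standard argument, e.g. evaluating a commutator of covariant derivatives, or using that $\varphi := \Delta u/n$ satisfies $\nabla \varphi = -(\text{something})\,\nabla u$ type relations coming from the Bianchi/commutation identities) that $u$ is constant near $p$, contradiction. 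So $\Delta u(p) \neq 0$, the Hessian is a nonzero multiple of $h$, hence nondegenerate, and critical points are isolated.

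Next, for the regular case $|\nabla u(p)| \neq 0$: I would show that $|\nabla u|^2$ is locally a function of $u$ alone. Differentiating $|\nabla u|^2$ along any vector $X$ gives $2\,\mathrm{Hess}\,u(\nabla u, X) = \frac{2\Delta u}{n}\,\langle \nabla u, X\rangle$, so $\nabla |\nabla u|^2$ is proportional to $\nabla u$; hence the level sets of $u$ are also the level sets of $|\nabla u|^2$, and on a neighborhood where $\nabla u \neq 0$ one can write $|\nabla u|^2 = b(u)$ for a smooth function $b$. Then I reparametrize: let $t$ be the signed arclength along the integral curves of $\nabla u / |\nabla u|$, equivalently defined by $dt = du / \sqrt{b(u)}$, so that $u = u(t)$ with $u'(t) = |\nabla u| \neq 0$. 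The level sets $N = u^{-1}(c)$ become the fibers; a Gauss-lemma–type computation shows $\nabla t = \nabla u / |\nabla u|^2$ is unit and orthogonal to the fibers, so $g = dt^2 + g_t$ where $g_t$ is the induced metric on the $t$-level set. Finally the Hessian equation restricted to directions tangent to the fiber computes the second fundamental form of the level sets and forces $\partial_t g_t = 2\frac{u''(t)}{u'(t)} g_t$, which integrates to $g_t = u'(t)^2 g_N$ for a fixed metric $g_N$ on $N$ (one must check the $t$-independence carefully — this is where one uses that the Hessian is a \emph{pure} multiple of the full metric, with no cross terms, and that $\mathrm{Hess}\,u(\partial_t,\cdot) = u''(t)\,dt$ forces the shift vector to vanish). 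This gives the type (I) local model.

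For the critical case $|\nabla u(p)| = 0$: by the nondegeneracy just established, $p$ is an isolated nondegenerate critical point with $\mathrm{Hess}\,u(p) = \frac{\Delta u(p)}{n} h$ a nonzero multiple of the identity, so in Morse-theoretic terms $u$ has a local max or min at $p$ and its level sets near $p$ are small spheres. I would then use polar normal coordinates centered at $p$: by the Gauss lemma the metric takes the form $dt^2 + h_t$ on a punctured ball, where $t$ is the distance to $p$; by the regular-case analysis applied on the punctured ball (where $\nabla u \neq 0$, since $p$ is the only critical point) one gets $h_t = \rho(t)^2 g_{S^{n-1}}$ for some round metric on the sphere, with $\rho(t)$ proportional to $u'(t)$; smoothness at $p$ together with the second-order Taylor expansion of $u$ (governed by $\mathrm{Hess}\,u(p)$) pins down the normalization $\rho(t) = u'(t)/u''(0)$, giving the type (II) model. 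The main obstacle I anticipate is the rigorous verification in the regular case that the fiber metric $g_N$ is genuinely independent of $t$ — i.e. that the warping is a clean product $dt^2 + u'(t)^2 g_N$ with no $t$-dependence hidden in $g_N$ and no mixed terms; this requires carefully exploiting every component of the Hessian equation (tangent-tangent, normal-normal, and the vanishing mixed component), rather than just the trace, and is exactly the point where one leverages that $\mathrm{Hess}\,u$ equals $\frac{\Delta u}{n}$ times the \emph{whole} metric $h$. Since this computation and its history are treated in detail in Lemma 3.6 and Proposition 3.8 of \cite{KR09}, I would cite those for the complete argument and only sketch the steps above.
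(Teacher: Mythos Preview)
Your proposal is correct and follows essentially the same route as the paper: both show $|\nabla u|$ is constant on level sets of $u$ via $D_X|\nabla u|^2 = 2\,\mathrm{Hess}\,u(\nabla u,X) = \tfrac{2\Delta u}{n}\langle\nabla u,X\rangle$, introduce $t$ as arclength along $\nabla u/|\nabla u|$, verify the $t$-curves are geodesics (killing the mixed terms), and then use the Lie-derivative identity $L_{\partial_t} h = 2(u''/u')h$ on tangent directions to extract the fixed fiber metric $g_N$; the paper, like you, only writes out part~(1) and defers the nondegeneracy statement and part~(2) to \cite{KR09}. One small slip: your formula $\nabla t = \nabla u/|\nabla u|^2$ should read $\nabla u/|\nabla u|$ for it to be unit.
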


Later, in case (2) we rescale $g_{S^{n-1}}$ without further comment to absorb $u''(0)$.   Since aspects of the proof will 
be important for our next two results, we include a proof of  (1) for completeness. We will find the following definitions of Tashiro  useful.

\begin{definition} Let $u$ be a solution to (\ref{eqn_hessian}).  A 
\emph{$u$-component} is a connected component of a non-degenerate level set of $u$.  A \emph{$u$-geodesic} is a geodesic 
of $h$ that is parallel to $\nabla u$ wherever $\nabla u \neq 0$.  \end{definition}

\begin{proof}[Proof of Lemma \ref{lemma_hessian}, (1)]  Let $p$ be a point with  $\nabla u(p) \neq 0$ and let $L$ be the 
$u$-component containing $p$.   There is a  neighborhood $U$  of  $p$   such that  $\nabla u \neq 0$  on $U$, $U$ is 
diffeomorphic to $(-\varepsilon,\varepsilon) \times N$,  where $N \subset L$,  and $U$ has coordinates $(t,x)$, 
where $\frac{\partial}{\partial t} = \frac{\nabla u}{|\nabla u|}$ and  $x$ 
denotes coordinates for $N$.  We also choose $N$ to be connected.  

For $X$ orthogonal to $\nabla u$, (\ref{eqn_hessian}) implies that 
\[ D_{X} |\nabla u|^2 = 2 \mathrm{Hess } \,u(X, \nabla u) = 0, \]
so $|\nabla u|$ is a function of $t$ and  $\nabla u = \psi(t) \frac{\partial}{\partial t}$.  This in turn implies that 
$u=u(t)$ and $\psi(t) = u'(t)$. 
Then we have 
\[ h\left( \nabla_{\frac{\partial}{\partial t}} \frac{\partial}{\partial t}, X\right) = \frac{1}{u'(t)} \mathrm{Hess}\, u \left( \frac{\partial}{\partial t}, X \right) = 0, \]
which shows that the curves $t \mapsto (t,x)$ are the $u$-geodesics in $U$.

 This establishes that the metric is of the form $h= dt^2 + g_t$, where $g_t$ is a one-parameter family of metrics on $N$.  It also implies  that 
 \[ \mathrm{Hess}\,u \left(\frac{\partial}{\partial t}, \frac{\partial}{\partial t} \right) = h\left( \nabla_{\frac{\partial}{\partial t}} \left(u'(t) \frac{\partial}{\partial t}\right) , \frac{\partial}{\partial t}\right) = u''(t). \]
By (\ref{eqn_hessian}), we now see that $\Delta u = n u''(t)$. Using  (\ref{eqn_hessian})   again we obtain for 
$X, Y$ orthogonal to $\nabla u$, 
 \[ (L_{\frac{\partial}{\partial t}} h)(X,Y) = 2 \frac{u''}{u'} h(X,Y),\]
 which implies that $g_t(X,Y) = u'(t)^2 g_N(X,Y)$ for some fixed metric $g_N$ on $N$.
 \end{proof}

\begin{remark} The fact that  $|\nabla u|$ is a function of $t$ in the proof shows that we can choose $U$ to be a 
neighborhood of $L$, even when $L$ is non-compact: if $\nabla u \neq 0$ on $\{t_0\} \times N$, 
then $\nabla u \neq 0$ and has constant length on the whole leaf $\{t_0\} \times L$. In particular, we see that,  in a neighborhood 
of a point with $\nabla u(p) \neq 0$, the sets 
$\{ t\} \times N$ are the $u$-components and that the $u$-geodesics are the geodesics in the $t$ direction.    
In the case where $\nabla u(p) = 0$ we have that all of the geodesics beginning at $p$ are $u$-geodesics and the 
metric spheres around $p$ are $u$-components.
\end{remark} 

Tashiro's theorem is  the following  global version of this result: if $h$ is complete and supports a 
non-constant function satisfying   (\ref{eqn_hessian}), then $h$ is globally a warped product over a one-dimensional 
base (cf. Theorem 5.4 of \cite{OsgoodStowe}).  We show that 
Tashiro's arguments can be used to also prove global theorems for (possibly incomplete) metrics that are conformal 
to a complete metric in a certain way. First we need a definition.   

\begin{definition}
Let $u$ be a non-constant solution to (\ref{eqn_hessian}) on $(M,h)$, let $f$ be a smooth function on $M$, and let $U \subset M$.
We say $f=f(t)$ on $U$ if $\nabla f$ is parallel to $\nabla u$ on $U$, and $f=f(x)$ on $U$ if $\nabla f$
is orthogonal to $\nabla u$ on $U$.
 \end{definition}
\begin{remark} 
From Lemma \ref{lemma_hessian}, around every point $p$, the metric $h$ can be written on some neighborhood $U$ of $p$ as
\[ dt^2 + u'(t)^2 g_N \]
with (polar or rectangular) coordinates $(t,x)$. Then $f=f(t)$ in the above definition if and only if $f$ is a function of only the $t$-coordinate on $U$;
similarly, $f=f(x)$ as above if and only if $f$ is independent of $t$ on $U$.
\end{remark}
 
\begin{lemma} \label{lemma_f(t)_global}  Suppose that a non-constant function $u$ on a Riemannian manifold $(M,h)$ satisfies (\ref{eqn_hessian}) and suppose that $f$ is a function such that  $f=f(t)$ on $M$ and  $(M, e^{\frac{2f}{n-2}} h)$ is complete.  Then $(M,h)$ is (globally) a one-dimensional warped product, with fiber metric $g_N$ complete.
\end{lemma}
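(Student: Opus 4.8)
The plan is to adapt Tashiro's global argument, working not with the incomplete metric $h$ but with the complete conformal metric $\bar h = e^{\frac{2f}{n-2}} h$. First I would recall the local picture from Lemma~\ref{lemma_hessian}: around any point $p$, the metric $h$ is a warped product $dt^2 + u'(t)^2 g_N$ on a neighborhood, the critical points of $u$ are isolated and non-degenerate, and the $u$-geodesics (maximal geodesics of $h$ parallel to $\nabla u$) together with the $u$-components foliate the complement of the critical set. Because $f=f(t)$ on all of $M$, the conformal factor $e^{\frac{2f}{n-2}}$ is constant along each $u$-component, so in any rectangular chart $(t,x)$ the metric $\bar h$ still has the block-diagonal form $\bar h = e^{\frac{2f}{n-2}(t)}\,dt^2 + e^{\frac{2f}{n-2}(t)} u'(t)^2 g_N$. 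Introducing the new arclength parameter $\bar t$ with $d\bar t = e^{\frac{f}{n-2}(t)}\,dt$, I get $\bar h = d\bar t^2 + \bar v(\bar t)^2 g_N$ with $\bar v = e^{\frac{f}{n-2}} u'$; that is, $\bar h$ is itself locally a warped product over a one-dimensional base, with the \emph{same} fiber metric $g_N$ and with $\nabla^{\bar h}$-geodesics in the $\bar t$-direction being precisely the $u$-geodesics (reparametrized). The key point is that a $u$-geodesic of $h$ is, up to reparametrization, a geodesic of $\bar h$: this is a standard fact about how geodesics transform under a conformal change when the conformal factor varies only along the geodesic, and it is exactly the content of the block-diagonal computation above.

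Next I would run Tashiro's completeness argument on $\bar h$. Since $\bar h$ is complete, every unit-speed $u$-geodesic extends to a complete geodesic $\gamma: \R \to M$ (or to a closed geodesic) of $\bar h$. Along such a geodesic $u$ is monotone between consecutive critical points, and at a critical point $q$ of $u$ the local model (II) of Lemma~\ref{lemma_hessian} forces the $u$-geodesics through $q$ to be exactly the radial geodesics emanating from $q$; by completeness and the fact that there are no other critical points nearby, one shows as in Tashiro that there can be at most two critical points of $u$ on $M$, and analyzes the three cases: no critical point (type I), one critical point (type II), two critical points (type III). In each case, pushing the local warped-product structure along the complete $\bar h$-geodesics and using that the fiber $\{t\}\times N$ does not change topology as $t$ varies (the flow of $\nabla u/|\nabla u|$ gives diffeomorphisms between level sets away from critical points) yields a \emph{global} warped-product structure $\bar h = d\bar t^2 + \bar v(\bar t)^2 g_N$ on all of $M$, with $g_N$ complete because $\bar h$ is. Finally I undo the conformal change: reverting from $\bar t$ to $t$ via $dt = e^{-\frac{f}{n-2}}\,d\bar t$ — which is globally well-defined since $f=f(t)$ is a genuine function of the global coordinate $t$ — gives $h = dt^2 + u'(t)^2 g_N$ globally, with the same complete fiber $g_N$. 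This is the assertion of the lemma.

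The main obstacle I anticipate is the global topological/limiting argument rather than any local computation: one must rule out, for instance, a $u$-geodesic that is incomplete in $h$ yet whose $\bar h$-completion escapes the region where the warped-product coordinates are valid, and one must check that the fiber $N$ appearing in different local charts is the same manifold with the same metric $g_N$ — i.e., that the monodromy of the foliation by $u$-components is trivial. Both are handled exactly as in Tashiro's original proof once one observes (as above) that the $\bar h$-geodesics in the $\bar t$-direction coincide with the $u$-geodesics and that $|\nabla^h u|$, hence also the reparametrization $\bar t(t)$, is globally a function of $t$ alone; the remark following the proof of Lemma~\ref{lemma_hessian} already notes that the neighborhood $U$ can be taken to contain an entire $u$-component $L$, which is the crucial input that makes the leaves match up globally. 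So the proof is really: ``transfer to $\bar h$, quote Tashiro for the complete metric $\bar h$, transfer back,'' with the only genuine content being the verification that the conformal change of the stated form preserves the class of one-dimensional warped products and the $u$-geodesic structure.
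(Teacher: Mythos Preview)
Your proposal is correct and follows essentially the same route as the paper: both arguments pass to the complete metric $g=\bar h=e^{\frac{2f}{n-2}}h$, observe that (because $f=f(t)$) the $u$-geodesics of $h$ are reparametrized geodesics of $\bar h$ and that $\bar h$ is itself locally a one-dimensional warped product with the same fiber, then use completeness of $\bar h$ to extend these geodesics and carry out Tashiro's case analysis on the number of critical points of $u$. The one caveat in your summary ``quote Tashiro for the complete metric $\bar h$'' is that $u$ does not satisfy~(\ref{eqn_hessian}) for $\bar h$, so Tashiro's \emph{theorem} cannot be cited verbatim; but you clearly mean to rerun Tashiro's \emph{argument} (as the paper does explicitly), and that is the right thing to do.
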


\begin{remark} The normalization of the conformal factor $e^{\frac{2f}{n-2}}$ is not important, but is used to be consistent with later notation. \end{remark} 
\begin{proof}
Set $g=e^{\frac{2f}{n-2}}h$.  Let $N$ be a $u$-component, with induced metric $g_N$ from $h$.  Applying Lemma \ref{lemma_hessian}
 to every  point of $N$ shows that $g|_{TN}$ and ${h}|_{TN}$ are homothetic (since $f=f(t)$).  Since $N$ is a closed subset of the complete 
manifold $(M,g)$, it follows that $(N, g_N)$ is complete. 

Let $J$ be the largest open interval of regular values of $u$ that contains $u(N)$, and let $U \subset M$ be the connected 
component of $u^{-1}(J)$ that contains $N$.  Let $q \in N$ and let $\gamma_{q}$ be the $u$-geodesic with respect to $h$ 
through $q$.  Since $f=f(t)$, $\gamma_{q}$ is, 
up to reparametrization, also a geodesic for $g$.  In particular, since $g$ is complete, such curves are well-defined  until they possibly leave $U$.  Moreover,  since $u=u(t)$,
they all leave $U$ (if at all) at the same parameter value of $t$.

As in the proof of Lemma \ref{lemma_hessian}, it follows that $U$ is diffeomorphic  to $I \times N$, where $I$ 
is an open interval, and that in the coordinates induced by this diffeomorphism,
\[h = dt^2 + u'(t)^2 g_N,\]
where $t$ is the signed $h$-distance to $g_N$.  Say $I=(a,b)$ (where $a,b \in [-\infty,\infty])$, and
define the change of variables:
\[s(t) = \int_0^t e^{\frac{f(r)}{n-2}} dr.\]
Using the new coordinate $s$, we have that on $U$,
\[g = ds^2 + u'(s)^2 e^{\frac{4f(s)}{n-2}} g_N.\]
Define
\[ c = \lim_{t \to a+} s(t), \qquad \qquad d = \lim_{t \to b-} s(t).\]
Now we can just imitate Tashiro's proof, analyzing three possible cases.

If $c=-\infty$ and $d= +\infty$, the restriction of $g$ to the open subset $U$ defines a complete metric, 
and so $(U,g|_U) = (M,g)$. In particular, $g$ is a globally warped product with one-dimensional base of type (I).
Since $f=f(t)$, the same goes for $h$.

Next, suppose $c=-\infty$ but $d$ is finite (or vice versa).  Consider a geodesic $\gamma(s)$ with respect to $g$, orthogonal to 
$N$ with increasing $s$. By completeness, $\gamma$ may be extended to $\R$, and so we conclude $q=\lim_{s \to d-} \gamma(s)$
is a critical point of $u$ (or else $J$ was not maximal as chosen). By Lemma \ref{lemma_hessian}, $h$ admits
polar coordinates $(t_1,x)$ about $p$ with warping factor $u'$ and fiber $S^{n-1}$.  Since the coordinates $t$ 
and $t_1$ are both given by level sets of $u$, these coordinate neighborhoods can be combined to show that $h$
and $g$ are warped product metrics with one-dimensional base of type (II).

Finally, consider the case in which $c$ and $d$ are both finite.  A similar argument shows that $u$ has critical points
at $s=c$ and $s=d$, and we conclude that $h$ is a warped product with one-dimensional base of type (III).
\end{proof}
From these arguments we also see the following in the case $f=f(x)$.
\begin{lemma}  
\label{lemma_f(x)_global}
Suppose that a non-constant function $u$ on a Riemannian manifold $(M,h)$ satisfies 
(\ref{eqn_hessian}) and has no critical points.  Suppose that $f$ is a function such that  
$f=f(x)$ on $M$ and  $(M, e^{\frac{2f}{n-2}} h)$ is complete.  Then $(M,h)$ is (globally) a one-dimensional warped product 
of type (I):
\[ (M,h) = \left( \mathbb{R} \times N, dt^2 + u'(t)^2 g_N\right). \]
\end{lemma}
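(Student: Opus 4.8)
The plan is to follow the argument used to prove Lemma~\ref{lemma_f(t)_global} (in effect, Tashiro's argument), the only genuinely new ingredient being a completeness statement for certain geodesics of $h$. Write $g = e^{\frac{2f}{n-2}}h$ for the given complete metric. Since $u$ has no critical points, Lemma~\ref{lemma_hessian}(1) applies at every point of $M$, so $h$ is locally of the form $dt^2 + u'(t)^2 g_N$ with $u = u(t)$ and $|\nabla u| = u'(t)$; patching these local descriptions shows that $|\nabla u|$ is a function of $u$ alone. The only obstruction to Tashiro's argument producing immediately a global type-(I) warped product over $\R$ is the possibility that a $u$-geodesic of $h$ --- that is, a unit-speed $h$-geodesic whose velocity is parallel to $\nabla u$ --- has a bounded maximal interval of definition; ruling this out is the crux, and the difficulty is that, unlike in Lemma~\ref{lemma_f(t)_global}, these curves need not be $g$-geodesics (a conformal change can bend them when the gradient of the conformal factor is orthogonal to the velocity), so one cannot invoke geodesic completeness of $g$ directly.

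The key observation is that $f$ is constant along each $u$-geodesic: the velocity of such a curve $\gamma$ is parallel to $\nabla u$, and $f = f(x)$ means precisely $\nabla f \perp \nabla u$, so $\frac{d}{dt}(f \circ \gamma) = 0$, say $f \equiv f_0$ along $\gamma$. Consequently $g$ and $h$ agree up to the fixed constant $e^{\frac{2f_0}{n-2}}$ along $\gamma$, so $\gamma$ has constant $g$-speed. Now suppose some maximal $u$-geodesic $\gamma$ is defined only on an interval $(a,b)$ with $b < \infty$. Then $\gamma|_{[0,b)}$ has finite $g$-length, hence is $g$-Cauchy as $t \to b^-$, and therefore converges in the complete manifold $(M,g)$ to some point $p \in M$; but $\nabla u(p) \neq 0$, and an $h$-geodesic extends past any interior limit point of its image, contradicting the maximality of $(a,b)$. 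Hence $b = +\infty$, and symmetrically $a = -\infty$: every maximal $u$-geodesic of $h$ is defined on all of $\R$. I expect this completeness step to be the main obstacle, precisely because the relevant geodesics of $h$ fail to be geodesics of the complete metric $g$; the workaround is to replace the geodesic property by the elementary fact that a finite-length curve in a complete manifold converges, which applies here because the conformal factor is constant along each $u$-geodesic.

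Given this, the rest proceeds exactly as in Tashiro's argument. Fix a $u$-component $N$. Since $|\nabla u|$ depends only on $u$, the unit field $\xi = \nabla u / |\nabla u|$ preserves the foliation of $M$ by the level sets of $u$, and by the previous paragraph its flow $\Psi_t$ is complete; moreover $t \mapsto u(\Psi_t(q))$ is a diffeomorphism of $\R$ onto the open interval $u(M)$, the same for every $q \in N$. The map $\Phi : \R \times N \to M$, $\Phi(t,q) = \Psi_t(q)$, is then a local diffeomorphism (it coincides locally with the charts of Lemma~\ref{lemma_hessian}(1)); it is injective because $u$ is strictly monotone along each flow line and so cannot return a point of $N$ to $N$ at a nonzero time; and its image is open and is also closed, since $N$ is a closed subset of $M$ and the base map $t \mapsto u(\Psi_t(q))$ is a diffeomorphism onto $u(M)$. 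As $M$ is connected, $\Phi$ is a diffeomorphism. Pulling $h$ back through $\Phi$ and invoking the local normal form of Lemma~\ref{lemma_hessian}(1) on each chart gives $h = dt^2 + u'(t)^2 g_N$ globally on $\R \times N$, where $g_N$ is the metric induced by $h$ on $N$ and $t$ is the signed $h$-distance to $N$; since $u$ has no critical points, this is genuinely of type (I) with $I = \R$, as claimed. (As in the $f = f(t)$ case one should note that completeness is not obtained for $g_N$ itself, only for the conformal metric $e^{\frac{2f}{n-2}}g_N$ induced by $g$ on $N$, which is why the statement differs from Lemma~\ref{lemma_f(t)_global} on that point.)
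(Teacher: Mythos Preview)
Your proof is correct and follows essentially the same approach as the paper's: both identify that the crux is showing $u$-geodesics of $h$ exist for all time, and both establish this by observing that $f$ is constant along each $u$-geodesic (since $\nabla f \perp \nabla u$), so a bounded-parameter $u$-geodesic is $g$-Cauchy and hence converges in the complete manifold $(M,g)$, allowing extension. Your write-up is more detailed on the global diffeomorphism $\R \times N \to M$ via the flow of $\nabla u/|\nabla u|$, whereas the paper simply refers back to the arguments of Lemmas~\ref{lemma_hessian} and~\ref{lemma_f(t)_global}; but the substance is the same.
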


\begin{remark} In this case, $g_N$ is not necessarily complete. \end{remark}

\begin{proof}
This result follows from similar arguments to Lemmas \ref{lemma_hessian} and \ref{lemma_f(t)_global} once we show that the $u$-geodesics with respect to $h$ exist for all time.   Let $\gamma(t)$ be a $u$-geodesic with respect to $h$ defined for $0 \leq t < t_0$.   Let $t_i \nearrow t_0$, so that $\{\gamma(t_i)\}$ is Cauchy in $(M,h)$.  The sequence  is also Cauchy in  $(M,g)$
since $g=e^{\frac{2f}{n-2}}h$ and $f$ is constant along $\gamma(t)$.
By completeness, $\{\gamma(t_i)\}$ converges with respect to $g$ to some $q \in M$.  By considering
the $u$-geodesics  in a neighborhood of $q$, we see that $\gamma$ can be extended past $t_0$.
\end{proof}

\begin{remark} From the proof we can see that only the completeness of the $u$-geodesics in $g$ is necessary for this last result. \end{remark}

\section{Conformal diffeomorphisms preserving the GQE structure}
\label{sec_conf_changes}
In this section we prove Theorem \ref{thm_local_classification}, giving the classification of GQE metrics 
admitting a conformal diffeomorphism preserving the GQE structure.  The first step is to give a convenient conformal 
interpretation of the GQE equation (\ref{eqn_GQE}), used previously by Catino \cite{Catino} and Kotschwar \cite{Kotschwar10}.

\begin{proposition} A Riemannian manifold $(M,g)$ of dimension $n \geq 3$ satisfies the GQE equation (\ref{eqn_GQE}) 
with functions $f, \alpha,$ and $\lambda$ if and only if there is a conformally related metric $h$ that satisfies 
\begin{equation}
\label{eqn_Ric_h}
\mathrm{Ric}_{h} = \left(\frac{1}{n-2} -\alpha\right) df \otimes df + Q  h
\end{equation}
for some function $Q$, where $\mathrm{Ric}_h$ is the Ricci curvature of $h$.
\label{prop_Ric_h}
\end{proposition}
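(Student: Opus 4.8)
The plan is to prove the equivalence by direct substitution into the standard conformal transformation laws, with the conformal factor forced by the potential $f$. Concretely, I take $g$ and $h$ related by $g = e^{\frac{2f}{n-2}} h$ (equivalently $h = e^{-\frac{2f}{n-2}} g$), matching the normalization used later in Lemma~\ref{lemma_f(t)_global}, and invoke two classical identities for a conformal change $g = e^{2\psi} h$ on an $n$-manifold: the transformation of Ricci curvature,
\[
\mathrm{Ric}_g = \mathrm{Ric}_h - (n-2)\left( \mathrm{Hess}_h \psi - d\psi \otimes d\psi \right) - \left( \Delta_h \psi + (n-2)|\nabla_h \psi|^2_h \right) h,
\]
and the transformation of the Hessian of a function $u$,
\[
\mathrm{Hess}_g u = \mathrm{Hess}_h u - d\psi \otimes du - du \otimes d\psi + \langle \nabla_h \psi, \nabla_h u\rangle_h \, h .
\]
Both follow at once from the conformal change of the Christoffel symbols, so I would simply cite them.

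For the forward implication, assume $(M,g)$ satisfies~(\ref{eqn_GQE}) and substitute $\psi = \frac{f}{n-2}$ and $u = f$ into the two identities above. The mechanism is that this choice makes $(n-2)\mathrm{Hess}_h\psi = \mathrm{Hess}_h f$; so writing $\mathrm{Ric}_g = \lambda g - \mathrm{Hess}_g f - \alpha\, df\otimes df$ from~(\ref{eqn_GQE}) and then eliminating $\mathrm{Hess}_g f$ in favor of $\mathrm{Hess}_h f$ via the Hessian identity, all second-derivative terms cancel. What survives is a multiple of $df\otimes df$ whose coefficient collapses to exactly $\frac{1}{n-2} - \alpha$ --- this is the one genuinely load-bearing constant in the computation, and it is precisely what pins the conformal factor down to $e^{\pm 2f/(n-2)}$ --- together with a collection of terms all proportional to $h$, built out of $\lambda$, $\Delta_h f$ and $|\nabla_h f|^2_h$, which we gather into the function $Q$. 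This is exactly~(\ref{eqn_Ric_h}). The converse is the same computation read backwards: given $h$ satisfying~(\ref{eqn_Ric_h}), put $g = e^{\frac{2f}{n-2}} h$, apply the two identities with the same $\psi$ and $u$, and read off the (in general non-constant) function $\lambda$ as the coefficient of $g$ in $\mathrm{Ric}_g + \mathrm{Hess}_g f + \alpha\, df\otimes df$.

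I do not anticipate a real obstacle here: the content is bookkeeping rather than an argument. The only point demanding care is consistently tracking whether each Hessian, gradient and Laplacian is taken with respect to $g$ or to $h$, since the conformal factor itself depends on $f$; the cleanest route is to push everything onto $h$, using the Hessian identity solely to remove $\mathrm{Hess}_g f$. One should also record the harmless ambiguity that ``$Q h$'' and ``$\tilde Q g$'' differ only by the factor $e^{2f/(n-2)}$, which is immaterial since $Q$ is asked merely to be some smooth function. No completeness or compactness hypotheses are used; the statement is pointwise and holds verbatim in the local setting.
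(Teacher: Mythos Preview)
Your proposal is correct and follows essentially the same approach as the paper: both fix the conformal change $h=e^{-2f/(n-2)}g$ and read off~(\ref{eqn_Ric_h}) from the standard conformal transformation law for Ricci curvature. The only cosmetic difference is that the paper writes the single identity $\mathrm{Ric}_h=\mathrm{Ric}_g+\mathrm{Hess}_g f+\frac{1}{n-2}df\otimes df+\frac{1}{n-2}(\Delta_g f-|\nabla f|_g^2)g$ with all derivatives taken in $g$, so $\mathrm{Hess}_g f$ appears directly and no separate Hessian identity is needed; your route, pushing everything to $h$-quantities and then canceling $\mathrm{Hess}_h f$ against $(n-2)\mathrm{Hess}_h\psi$, is a step longer but equivalent.
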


\begin{proof} 
Set  $h=e^{\frac{-2f}{n-2}} g$.  The Ricci curvatures of $h$ and $g$ are related by:
\begin{equation}
\label{eqn_Ric_conformal}
\mathrm{Ric}_{h} = \mathrm{Ric}_g + \mathrm{Hess}_g f + \frac{1}{n-2}df \otimes df + \frac{1}{n-2}\left( \Delta_g f - |\nabla f|_g^2\right) g.
\end{equation}
Thus we see that $h$ satisfies (\ref{eqn_Ric_h}) if and only if $\mathrm{Ric}_g + \mathrm{Hess}_g f + \alpha df \otimes df = \lambda g$,  where 
\begin{equation} \label{Qeqn}
 Q = \frac{1}{n-2}\left( \Delta_g f - |\nabla f|_g^2 + (n-2)\lambda\right)e^{\frac{2f}{n-2}}. 
 \end{equation}
\end{proof}

\begin{remark} $h$ is generally incomplete even if $g$ is complete.\end{remark}  
\begin{remark} It follows that a Riemannian metric is conformal to an Einstein metric if and only if it admits a 
GQE structure with $\alpha \equiv \frac{1}{n-2}$.
\end{remark}  

\begin{example}
\label{example_f(t)}
A warped product over a one-dimensional base
\[h=dt^2 + v(t)^2 g_N\]
has Ricci curvature 
\begin{equation}
\label{eqn_Ric_h_WP}
\mathrm{Ric}_h = -(n-1)\frac{v''}{v} dt^2 + \mathrm{Ric}_{g_N} - \left(vv''+(n-2)(v')^2\right)g_N.
\end{equation}
Assume $\mathrm{Ric}_{g_N} = \mu g_N$ for a constant $\mu$.  
Then (\ref{eqn_Ric_h})
is satisfied with $f=f(t)$ if and only if 
\begin{eqnarray}
\label{eqn_f_prime_Ric}
\left( \frac{1}{n-2} - \alpha \right) f'(t)^2 & =&  \mathrm{Ric}_h\left( \frac{\partial}{\partial t},\frac{\partial}{\partial t} \right) -  \mathrm{Ric}_h\left( X,X \right)  \\
  &=&  \frac{(n-2) ((v')^2-vv'')-\mu}{v^2} \nonumber
\end{eqnarray}
where $X$ is a field perpendicular to $\frac{\partial}{\partial t}$  such that $h(X,X) = 1$.  Choose a function $\alpha(t)$
so that (\ref{eqn_f_prime_Ric}) defines a function $f(t)$.  Then $h$ and $f$ satisfy (\ref{eqn_Ric_h}) for some $Q$.  
Consequently, $g=e^{\frac{2f}{n-2}}h$ is a GQE metric.
\end{example}

\begin{example}
As an alternative to the above, let $(M,g)$ be a warped product over a one-dimensional base,
$g= ds^2 + v(s)^2 g_N$, of type (I) with
$g_N$ Einstein (with Einstein constant $\mu$), or else of type (II) or (III) (in which the Einstein constant
of $g_{S^{n-1}}$ is $(n-1)$.  Then $(M,g)$ is automatically a Ricci almost soliton, where the potential $f=f(s)$
can be found from the ODE:
\[\left(\frac{f'}{v}\right)' = \frac{\mu + (n-2)(vv''-(v')^2)}{v^3},\]
and $\lambda=\lambda(s)$ is given by:
\[\lambda = f'' - (n-1)\frac{v''}{v}.\]
\end{example}

\subsection{Local form of $h$} 
We first prove a local classification for the conformally rescaled metric $h$.
\begin{lemma}
\label{lemma_local_form_h}
Let $(M_1,g_1,f_1,\alpha_1,\lambda_1)$ and $(M_2,g_2,f_2,\alpha_2,\lambda_2)$ be GQE manifolds 
admitting a non-homothetic conformal diffeomorphism $\phi$ preserving the GQE structure. 
Then every $p \in M_1$ is contained in a neighborhood $U$ on which $h_1=e^{-\frac{2f_1}{n-2}} g_1$ 
is a warped product over a one-dimensional base of type (I) or (II):
\[h_1 = dt^2 + u'(t)^2 g_N\]
for an appropriate function $u(t)$.
\end{lemma}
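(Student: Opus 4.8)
The plan is to transfer the conformal-diffeomorphism data into an identity relating the two GQE structures, then combine it with Proposition~\ref{prop_Ric_h} to produce a nonconstant solution of the Hessian equation~\eqref{eqn_hessian} on $h_1$, so that Lemma~\ref{lemma_hessian} applies. First I would set $h_1 = e^{-\frac{2f_1}{n-2}} g_1$ and $h_2 = e^{-\frac{2f_2}{n-2}} g_2$, so that by Proposition~\ref{prop_Ric_h} each $h_i$ satisfies $\mathrm{Ric}_{h_i} = \left(\frac{1}{n-2}-\alpha_i\right) df_i\otimes df_i + Q_i h_i$. The conformal relation $\phi^*g_2 = w^{-2} g_1$, together with the structure-preserving hypotheses $\phi^*df_2 = df_1$ and $\phi^*\alpha_2 = \alpha_1$, gives $\phi^* h_2 = e^{-\frac{2f_1}{n-2}}\phi^* g_2 = w^{-2} h_1$; so $\phi$ is also a conformal diffeomorphism between $(M_1,h_1)$ and $(M_2,h_2)$, with the \emph{same} conformal factor $w$, and it is non-homothetic since $w$ is not constant.

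Next I would write down the transformation law for the Ricci tensor under the conformal change $h_1 \mapsto w^{-2} h_1 = \phi^* h_2$. Pulling the equation $\mathrm{Ric}_{h_2} = \left(\frac{1}{n-2}-\alpha_2\right)df_2\otimes df_2 + Q_2 h_2$ back by $\phi$ and using $\phi^*df_2 = df_1$, $\phi^*\alpha_2 = \alpha_1$, the two Ricci equations for $h_1$ and $\phi^*h_2$ both have $df_1\otimes df_1$ terms with the \emph{identical} coefficient $\frac{1}{n-2}-\alpha_1$. Subtracting, those rank-one terms cancel, and the standard conformal Ricci formula leaves an equation of the shape
\[
(n-2)\,\mathrm{Hess}_{h_1}(\log w) + \big(\Delta_{h_1}\log w - (n-2)\,|\nabla \log w|^2\big)\,h_1 = (\text{function})\cdot h_1,
\]
i.e. the traceless part of $\mathrm{Hess}_{h_1}(\log w)$ vanishes: $\mathrm{Hess}_{h_1}(\log w) = \frac{\Delta_{h_1}\log w}{n}\, h_1$. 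Since $w$ is not constant, neither is $\log w$, so setting $u$ to be a primitive of $w^{-1}$ — or more directly taking $u$ itself to be an appropriate nonconstant function of $\log w$ so that~\eqref{eqn_hessian} holds — yields a nonconstant solution of $\mathrm{Hess}_{h_1} u = \frac{\Delta_{h_1} u}{n}h_1$ on $M_1$. (Concretely: if a nonconstant $\varphi$ satisfies $\mathrm{Hess}\,\varphi = \rho\, h_1$ for some function $\rho$, then by Proposition~\ref{lemma_warped_prod_hess} and the warped-product normal form one can pass to a genuine anti-derivative-of-warping-factor coordinate; the cleanest route is to observe that the trace-free Hessian of $\varphi$ vanishes, which is exactly the hypothesis of Lemma~\ref{lemma_hessian}.) Applying Lemma~\ref{lemma_hessian} then gives, around each $p\in M_1$, a neighborhood $U$ on which $h_1 = dt^2 + u'(t)^2 g_N$ in rectangular (type I) or polar (type II) coordinates, which is the claim.

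The main obstacle is the bookkeeping in the subtraction step: I must be careful that the conformal transformation formula for $\mathrm{Ric}$ under $h_1 \mapsto w^{-2}h_1$ is applied with the right sign and normalization (the formula~\eqref{eqn_Ric_conformal} is written for the specific change $g \mapsto e^{-2f/(n-2)}g$, so here I would re-derive or rescale it for a general positive conformal factor), and that the $df_1\otimes df_1$ coefficients genuinely match so they cancel rather than merely combine. A secondary subtlety is that~\eqref{eqn_hessian} demands a function whose full Hessian — not just its trace-free part — is pointwise proportional to $h_1$; this is automatic once the trace-free Hessian of any nonconstant $\varphi=\varphi(\log w)$ vanishes, but I should record that the vanishing of the trace-free Hessian of $\log w$ is conformally the right object and that one is free to replace $\log w$ by a suitable reparametrization to land exactly on~\eqref{eqn_hessian}. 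Once past these points the conclusion is immediate from Lemma~\ref{lemma_hessian}.
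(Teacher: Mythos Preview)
Your overall strategy matches the paper's: pass to the conformal metrics $h_i$, subtract the two Ricci identities coming from Proposition~\ref{prop_Ric_h} so that the rank-one $df_1\otimes df_1$ terms cancel, read off a Hessian equation from the conformal-change formula for Ricci, and invoke Lemma~\ref{lemma_hessian}. There is, however, a genuine error in the execution.

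First a minor bookkeeping slip: from $\phi^*df_2 = df_1$ you only get $\phi^*f_2 = f_1 + C$ for a constant $C$, so $\phi^*h_2 = e^{-\frac{2C}{n-2}}\,w^{-2} h_1 = u^{-2} h_1$ with $u = e^{\frac{C}{n-2}} w$, not literally $w^{-2}h_1$. This is harmless since $u$ is still nonconstant, but it is exactly the normalization the paper uses.

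The real problem is your displayed conformal Ricci identity. Written in terms of $\psi=\log w$ (equivalently $\log u$), the formula for $\mathrm{Ric}_{u^{-2}h_1}-\mathrm{Ric}_{h_1}$ has an additional rank-one term $(n-2)\,d\psi\otimes d\psi$ that you have dropped; equivalently, $\frac{\mathrm{Hess}_{h_1}u}{u}=\mathrm{Hess}_{h_1}\psi+d\psi\otimes d\psi$. With the correct formula, the conclusion ``trace-free part of $\mathrm{Hess}_{h_1}(\log w)$ vanishes'' is false in general, and no amount of reparametrizing by a function of $\log w$ after the fact repairs this step as written. The clean fix --- and this is precisely what the paper does --- is to use $u$ itself rather than $\log u$: the conformal-change formula in the form
\[
\mathrm{Ric}_{u^{-2}h_1}-\mathrm{Ric}_{h_1}=(n-2)\,\frac{\mathrm{Hess}_{h_1}u}{u}+\Big(u^{-1}\Delta_{h_1}u-(n-1)u^{-2}|\nabla u|_{h_1}^2\Big)h_1
\]
shows directly that $\mathrm{Hess}_{h_1}u$ is pointwise proportional to $h_1$, i.e.\ \eqref{eqn_hessian} holds for $u$ with no reparametrization needed. (Your worry that ``Hessian proportional to the metric'' might be stronger than ``trace-free Hessian vanishes'' is unfounded: the two conditions are equivalent, with proportionality factor $\frac{\Delta u}{n}$.) Once you make this correction, your argument is exactly the paper's.
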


\begin{remark}  If $\alpha_1=\alpha_2=\frac{1}{n-2}$ or $f_1$ and $f_2$ are constant, this result recovers Brinkmann's original 
result for Einstein manifolds.
\end{remark}

\begin{proof} 
Let $h_i=e^{-\frac{2f_i}{n-2}} g_i$ be the corresponding conformally rescaled metrics.  Then $\phi^* g_2 = w^{-2} g_1$
if and only if $\phi^*(h_2) = u^{-2} h_1$,
where 
\[u^{-2}=w^{-2} e^{\frac{2f_1}{n-2}}e^{-\frac{2\phi^*f_2}{n-2}} = w^{-2}e^{-\frac{2C}{n-2}}\]
for the constant $C=\phi^*f_2 - f_1$.  In particular, since
$w$ is non-constant, $u$ is non-constant as well.
We have from Proposition \ref{prop_Ric_h} that
$\mathrm{Ric}_{h_i} = \left(\frac{1}{n-2} - \alpha_i\right)df_i \otimes df_i + Q_i h_i $ for $i=1,2$.  
Since $\phi$ preserves the GQE structure, we have
\[ \phi^*\left ( \left( \frac{1}{n-2} - \alpha_2 \right)  df_2 \otimes df_2 \right) =\left( \frac{1}{n-2} - \alpha_1\right) df_1 \otimes df_1,\]
and so
\[ \mathrm{Ric}_{\phi^*( h_2)} - \mathrm{Ric}_{h_1} =  (\phi^*Q_2) \phi^*h_2 - Q_1 h_1 = \left((\phi^*Q_2) u^{-2} - Q_1\right) h_1.\]
Thus the difference of the Ricci tensors is pointwise proportional to $h_1$. 
On the other hand,  by the formula for the conformal change $h_1 \to u^{-2}h_1$, we have
\[ \mathrm{Ric}_{\phi^* (h_2)} - \mathrm{Ric}_{h_1} =  (n-2)  \frac{\mathrm{Hess}_{h_1} u }{u}  + \left( u^{-1}\Delta_{h_1} u  - (n-1) |\nabla u|_{h_1}^2 u^{-2} \right) h_1.\]
Putting these equations together we conclude that $\mathrm{Hess}_{h_1} u$ is pointwise proportional to $h_1$.  Taking
the trace, we see that (\ref{eqn_hessian}) is satisfied by $u$ with respect to $h_1$.  From Lemma
\ref{lemma_hessian}, we deduce the local warped product structure of $h_1$.
\end{proof}

\subsection{Local form of $g$} Now with a local classification of the metric $h_1=e^{-\frac{2f_1}{n-2}}g_1$, we 
pass to a local classification of $g_1$ by understanding the local behavior of $f_1$.

\begin{lemma} Under the hypotheses of Lemma \ref{lemma_local_form_h}, let $p$ be a point in $M_1$ with 
$\alpha_1(p) \neq \frac{1}{n-2}$, and set $h=h_1$ and $f=f_1$, and $\alpha=\alpha_1$.
\label{lemma_local_form_f}
\begin{enumerate}
\item If $p$ is a critical point of $u$, then $h$ is a type (II) warped product
\[ h= dt^2 +  u'(t)^2 g_{S^{n-1}} \]
in a polar coordinate neighborhood $U$ of $p$ and $f=f(t)$ on $U$. 
\item If $p$ is not a critical point of $u$, then $h$ is a type (I) warped product
\[ h = dt^2 +  u'(t)^2 g_{N}  \]
 in a rectangular coordinate neighborhood $U$ of $p$, and either 
\begin{enumerate}
\item $f=f(t)$ on $U$ and (if $n \geq 4$ or $\alpha$ is constant) $g_N$ is Einstein, or 
\item $f=f(x)$ on $U$ and 
\begin{equation}
\label{eqn_Ric_gN}
 \mathrm{Ric}_{g_N} = \left(\frac{1}{n-2}-\alpha\right)df \otimes df + P g_N
\end{equation}
where $P$ is a constant and $\alpha=\alpha(x)$.    Moreover $Q$, defined in Proposition \ref{prop_Ric_h},
is constant and $u'''= \frac{-Q}{n-1} u'$.  
\end{enumerate}
\end{enumerate}
Finally, case \emph{(2b)} does not occur if $n=3$.
\end{lemma}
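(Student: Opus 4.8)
The plan is to carry the conformal reformulation of Lemma~\ref{lemma_local_form_h} one step further: having a local warped-product expression for $h$, I would compare the Ricci tensor of that warped product, given by~(\ref{eqn_Ric_h_WP}), with the GQE form $\mathrm{Ric}_{h}=\left(\frac{1}{n-2}-\alpha\right)df\otimes df+Qh$ of Proposition~\ref{prop_Ric_h}, and read off the constraints on $f$ and on $g_N$. By Lemma~\ref{lemma_local_form_h}, about $p$ we may write $h=dt^2+v(t)^2g_N$ with $v=u'$, in polar coordinates (so $g_N=g_{S^{n-1}}$) if $p$ is a critical point of $u$ and in rectangular coordinates otherwise; shrink $U$ so that $\alpha\ne\frac{1}{n-2}$ on $U$. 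Write $f_t=\partial f/\partial t$ and let $d^N f:=df-f_t\,dt$ be the part of $df$ tangent to the fibres. Since~(\ref{eqn_Ric_h_WP}) has no mixed $\partial_t$--fibre terms, equating the $(\partial_t,X)$-components of the two expressions for $\mathrm{Ric}_h$ (with $X$ tangent to a fibre) gives $\left(\frac{1}{n-2}-\alpha\right)f_t\,X(f)=0$; hence $f_t\cdot d^N f=0$ on $U$, and the cross terms drop out of $df\otimes df=f_t^2\,dt^2+d^N f\otimes d^N f$. The $(\partial_t,\partial_t)$-components give $-(n-1)v''/v=Q+\left(\frac{1}{n-2}-\alpha\right)f_t^2$, and matching the fibre components (using $h|_{TN}=v^2g_N$) yields, after substituting out $Q$,
\[ \mathrm{Ric}_{g_N}=\left(\tfrac{1}{n-2}-\alpha\right)d^N f\otimes d^N f+P\,g_N,\qquad P=(n-2)\big[(v')^2-v v''\big]-\big(\tfrac{1}{n-2}-\alpha\big)f_t^2v^2. \]
The crucial observation, used repeatedly, is that the left-hand side here is the Ricci tensor of the \emph{fixed} metric $g_N$, so it does not depend on $t$.

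If $p$ is a critical point of $u$, then $g_N=g_{S^{n-1}}$ is Einstein, so the displayed equation forces $\left(\frac{1}{n-2}-\alpha\right)d^N f\otimes d^N f$ to be a multiple of $g_{S^{n-1}}$; being of rank $\le 1$, it can be a multiple of a metric on a fibre of dimension $n-1\ge 2$ only if it vanishes, and since $\alpha\ne\frac{1}{n-2}$ we get $d^N f\equiv 0$, i.e.\ $f=f(t)$ on $U$. This is part~(1). If $p$ is not a critical point of $u$, then $f_t\cdot d^N f=0$ says that at each point either $f_t=0$ or $d^N f=0$. If $f$ is locally constant near $p$, both hold and we place ourselves in case~(2a); otherwise I would shrink $U$ to a connected neighbourhood on which $\nabla f\ne 0$, so that $U=\{f_t\ne 0\}\sqcup\{d^N f\ne 0\}$ is a disjoint union of open sets and connectedness puts us in exactly one case. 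In case~(2a) we have $d^N f\equiv 0$ and $f=f(t)$; then the displayed equation reads $\mathrm{Ric}_{g_N}=Pg_N$, and $t$-independence of the left side forces $P=P(x)$. If $n\ge 4$ the fibre has dimension $\ge 3$ and Schur's lemma makes $P$ constant; if $\alpha$ is constant, then the formula for $P$ exhibits it as a function of $t$ alone, hence again constant. Either way $g_N$ is Einstein.

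In case~(2b) we have $f_t\equiv 0$, so $f=f(x)$; then $Q=-(n-1)v''/v$ and $P=(n-2)[(v')^2-v v'']$. Since $f=f(x)$ the direction of $\nabla^N f$ (the $g_N$-gradient of $f$ along the fibre) is independent of $t$; evaluating the displayed equation on a $g_N$-unit vector $e$ orthogonal to $\nabla^N f$ gives $P=\mathrm{Ric}_{g_N}(e,e)$, which is independent of $t$, while $P$ is a function of $t$ alone, so $P$ is constant and hence $(v')^2-v v''$ is constant. Differentiating that relation gives $v' v''=v v'''$, i.e.\ $(v''/v)'=0$, so $v''=Cv$ for a constant $C$; therefore $Q=-(n-1)C$ is constant and $u'''=v''=\frac{-Q}{n-1}u'$. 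Evaluating the displayed equation along $\nabla^N f$ shows likewise that $\left(\frac{1}{n-2}-\alpha\right)|\nabla^N f|^2_{g_N}$ is independent of $t$, so $\alpha=\alpha(x)$; thus the displayed equation is precisely~(\ref{eqn_Ric_gN}) with all the asserted properties. Finally, if $n=3$ the fibre is a surface with $\mathrm{Ric}_{g_N}=K_{g_N}g_N$ ($K_{g_N}$ its Gauss curvature), so~(\ref{eqn_Ric_gN}) forces the rank-$\le 1$ tensor $\left(\frac{1}{n-2}-\alpha\right)df\otimes df$ to equal $(K_{g_N}-P)g_N$, which has rank $0$ or $2$; both sides must then vanish, so (as $\alpha\ne 1$) $f$ is constant and case~(2b) does not genuinely occur.

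The step I expect to be the main obstacle is the dichotomy in Case~(2): the pointwise identity $f_t\cdot d^N f=0$ separates the two behaviours only pointwise, and upgrading it to ``$f=f(t)$ on $U$'' or ``$f=f(x)$ on $U$'' needs the connectedness argument together with care at the critical locus of $f$ (where $f_t$ and $d^N f$ vanish simultaneously), handled above by shrinking $U$ and folding the locally-constant case into~(2a). A secondary subtlety is that deducing constancy of $P$ and $Q$, and the Einstein property of $g_N$, from mere $t$-independence requires the two sub-cases to be argued differently --- Schur's lemma in~(2a), and testing the displayed equation against the orthogonal complement of $\nabla^N f$ in~(2b).
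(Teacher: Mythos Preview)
Your approach is close to the paper's and most of the computations are correct, but the dichotomy in Case~(2) has a genuine gap that you yourself flagged but did not actually close. You write: ``otherwise I would shrink $U$ to a connected neighbourhood on which $\nabla f\neq 0$.'' The negation of ``$f$ is locally constant near $p$'' is only that every neighbourhood of $p$ meets $\{\nabla f\neq 0\}$; it does \emph{not} give $\nabla f(p)\neq 0$. If $\nabla f(p)=0$ while $f$ is not locally constant, no neighbourhood of $p$ can be shrunk to avoid the zero, so your open--disjoint--connected argument does not launch. The connected components of $\{\nabla f\neq 0\}$ near $p$ could, a priori, fall into different sub-cases, and you have not ruled this out.

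The paper avoids this trap by organizing the dichotomy around a \emph{fibre} property rather than around $\nabla f$: it asks whether $\mathrm{Ric}_{g_N}$ is proportional to $g_N$ at (or near) the fibre point $x_0$ of $p$. This criterion is independent of $t$ and of $\nabla f$, so when $\mathrm{Ric}_{g_N}=\mu g_N$ near $x_0$ the eigenspace argument (your rank argument, essentially) gives $d^Nf\equiv 0$ on a full product neighbourhood even at points where $\nabla f$ vanishes; when $\mathrm{Ric}_{g_N}$ is not proportional at $x_0$ one gets $\nabla f(p)\neq 0$ for free and your connectedness argument runs. The residual boundary situation --- $\mathrm{Ric}_{g_N}$ proportional at $x_0$ but not in any fibre neighbourhood --- is then handled by a short continuity argument (the paper's Case~D): approximating $x_0$ by fibre points $x_i$ in the non-proportional regime forces $df=0$ along the whole $t$-line over $x_0$, and one checks that a putative $f=f(t)$ component of $\{df\neq 0\}$ touching that line would force $f$ constant there, a contradiction; hence $f=f(x)$ near $p$. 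Your argument can be repaired along the same lines, but as written it is incomplete precisely at the point you identified as the main obstacle.
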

In particular, in the $f=f(x)$ case, the metric $e^{\frac{2f}{n-3}} g_N$ is a GQE $(n-1)$-manifold with potential $f$, 
with $\alpha$ shifted by $\frac{1}{n-3}- \frac{1}{n-2}$ (by Proposition \ref{prop_Ric_h}).

\begin{proof}
By the previous lemma, in a neighborhood $U$ of $p$,  
\[ h = dt^2 + u'(t)^2 g_N, \]
where $(t,x)$ are either polar or rectangular coordinates and the metric $g_N$ is independent of $t$.  
Letting $v(t) = u'(t)$, we find the Ricci curvature of $h$ from (\ref{eqn_Ric_h_WP}). For $X, Y$ tangent to $N$,
\begin{eqnarray} \label{RicCalcWP}
\mathrm{Ric}_{h}\left( \frac{\partial}{\partial t}, \frac{\partial}{\partial t}\right) &=& - (n-1)\frac{v''}{v}\\
\mathrm{Ric}_{h}\left( \frac{\partial}{\partial t}, X\right)&=& 0 \nonumber\\
\mathrm{Ric}_{h}(X, Y)&=&  \mathrm{Ric}_{g_N}(X,Y) - \left( \frac{v''}{v} +(n-2) \frac{(v')^2}{v^2} \right) h(X,Y) \nonumber
\end{eqnarray}
In this proof, we frequently identify $\{t\} \times N$ with $N$.

We begin with some observations.  First, from (\ref{eqn_Ric_h}) we see that $\mathrm{Ric}_h$ has at most two distinct eigenvalues at each point. If
there are two distinct eigenvalues, the orthogonal eigenspaces are of dimension $1$ and $n-1$.  Second, 
  $\nabla f$ is an eigenvector field for the $1$-dimensional eigenspace of $\mathrm{Ric}_h$ wherever it does not vanish.
Third, from (\ref{RicCalcWP}), $\frac{\partial}{\partial t}$ is an eigenvector field for $\mathrm{Ric}_h$.

Fix $p \in M$, and let $U$ be a coordinate neighborhood as in Lemma \ref{lemma_local_form_h} (shrunken if necessary so that 
$\alpha \neq \frac{1}{n-2}$ on $U$).
\paragraph{\emph{Case A}}
If $p$ is a critical point of $u$ then we have polar coordinates $(t,x)$, and $g_N=g_{S^{n-1}}$.  We then have
\begin{eqnarray*}
\mathrm{Ric}_{h}(X, Y)&=& \left( \frac{\mu - (n-2)(v')^2}{v^2} - \frac{v''}{v}  \right) h(X,Y).
\end{eqnarray*}
where $\mu$ is the Einstein constant of $g_{S^n}$.   This shows that the $n-1$ dimensional space  $T_qN \subset T_qM$  is contained in an eigenspace of $\mathrm{Ric}_h$ for every $q \in U$.    Therefore,  at any point in 
$U$ for which $\nabla f \neq 0$, we have that $\nabla f$ and $\frac{\partial}{\partial t}$ both span the one-dimensional eigenspace of $\mathrm{Ric}_{h}$ so that $\nabla f$ is parallel to $\frac{\partial}{\partial t}$.  Then $f=f(t)$ on $U$.  

From now on, we assume $p$ is not a critical
point of $u$, so that we have rectangular coordinates $(t,x)$ on $U$.  Without loss of generality we also assume 
that $p=(0,x_0)$ in these coordinates.   

\paragraph{\emph{Case B}}
Suppose $p$ is not a critical point of $u$ and $\mathrm{Ric}_{g_N} = \mu g_N$ at all points in a neighborhood $V\subset N$ 
containing  $x_0$ for some function $\mu \in C^{\infty}(V)$.   (This assumption is always satisfied when $n=3$ and by Schur's 
lemma $\mu$ is constant if we are in this case and $n>3$).   Then the exact same argument as in Case A, which only used   $\mathrm{Ric}_{g_N} = \mu g_N$, shows that $f=f(t)$ on $U$. 
Moreover, if $\alpha$ is constant, then (\ref{eqn_Ric_h}) and (\ref{RicCalcWP}) show that $Q=Q(t)$ and consequently that
$\mu$ is independent of $x$ and therefore constant.

We are now left with the case that $p$ is not a critical point of $u$ and  $\mathrm{Ric}_{g_N} \neq \mu g_N$ in any neighborhood of $x_0$.  There are then two cases, depending on whether the condition is true at $x_0$ or not.  

\paragraph{\emph{Case C}} 
Suppose $p$ is not a critical point of $u$  and $\mathrm{Ric}_{g_N}$ is not proportional to $g_N$ at $x_0$. 

Then $\mathrm{Ric}_h$ is not proportional to $h$, so
$\nabla f(p) \neq 0$.  Shrink $U$ if necessary so that $\nabla f \neq 0$ on $U$.  $\nabla f$ is
an eigenvector field of $\mathrm{Ric}_h$ with corresponding eigenvalue of multiplicity one.  It cannot be parallel to $\frac{\partial}{\partial t}$ and therefore
must be orthogonal to $\frac{\partial}{\partial t}$ on $U$.
This shows that $f=f(x)$ on $U$.  Setting our two expressions for $\mathrm{Ric}_h$ equal yields:
\begin{align}
-(n-1)\frac{v''}{v} &= Q\nonumber\\
\mathrm{Ric}_{g_N} &= \left(\frac{1}{n-2}-\alpha\right) df \otimes df + \underbrace{\left(Qv^2 + vv'' + (n-2)(v')^2\right)}_{P}g_N \label{eqn_g_N_P}
\end{align}
The first equation implies that $Q=Q(t)$; it follows then from the second that the coefficient $P$
on $g_N$ is constant, and consequently $\alpha=\alpha(x)$. Eliminating $Q$ in the equations gives
\[  v v'' - (v')^2  = - \frac{P}{n-2}. \]
Any solution to this equation must solve $v'' = kv$ for some constant $k$ and therefore $Q$ must also be constant.  We then
find for later reference that
\begin{equation}
\label{eqn_vPQ}
(v')^2 + \frac{Q}{n-1}v^2 = \frac{P}{n-2}.
\end{equation}

\paragraph{\emph{Case D}} 
$p$ is not a critical point of $u$ and $\mathrm{Ric}_{g_N}$ is proportional to $g_N$ at $x_0$, but is not proportional in 
any neighborhood of $x_0$. 

Consider a sequence of  points $x_i$ in $N$ converging to $x_0$ such that $\mathrm{Ric}_{g_N}$ is not proportional to 
$g_N$ at $x_i$.  Then $\nabla f(t,x_i)$ must be tangent to $N$ by  case C.  Therefore, by continuity and (\ref{eqn_g_N_P}),  we must have $df = 0$ at the points $(t, x_0)$ $t\in I$.    If $df=0$ in a neighborhood of $p$, then $f$ is constant and the lemma is clearly true by simply  shrinking $U$ to be the neighborhood where $f$ is constant.    

Consider a connected component $W$ of the nonempty, open set $\{df\neq0\} \cap U$ with $p \in \partial W$.
Since   $df\neq0$ on $W$, we have by cases A--C that either $f=f(t)$ or $f=f(x)$ on $W$.  
By way of contradiction, suppose $f=f(t)$ on $W$. Then $W$ is a set of the form $(a,b) \times V$, where $V$ is an open subset of $N$.
By the previous paragraph, $f$ is constant along the curve $ t \mapsto (t, x_0)$.  If $x_0 \in V$,
this shows that $df$ vanishes in $W$, a contradiction.  If $x_0 \in \partial V$, the same
argument applies by continuity.
Therefore, we have $f=f(x)$ in a neighborhood of $p$, so we may follow the argument of case C. 
\end{proof}

\subsection{Global form of $g$}
\label{sec_conf_changes_global}
The previous lemma splits $M$ into two sets: the points where $f=f(t)$ and the points  where $f=f(x)$.  We now
rule out the possibility that both cases occur.

\begin{lemma}
\label{lemma_ft_fx}
If $\alpha \neq \frac{1}{n-2}$, 
the cases $f=f(x)$ and $f=f(t)$ may not both occur on the same connected manifold $M$, unless $f$ is constant.  Moreover, if
$f$ is non-constant and $f=f(x)$ occurs, then $u$ has no critical points.
\end{lemma}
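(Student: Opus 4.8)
The plan is to stratify $M$ according to the behaviour of $f$ relative to $u$. Let $\Omega_t$ be the open set of points near which $\nabla f\neq 0$ and $\nabla f\parallel\nabla u$, let $\Omega_x$ be the open set of points near which $\nabla f\neq 0$ and $\nabla f\perp\nabla u$, and let $Z=\{df=0\}$. By Lemma~\ref{lemma_local_form_f} and the local ``non‑mixing'' argument in its proof, $M=\Omega_t\sqcup\Omega_x\sqcup Z$; and since near a critical point of $u$ one is always in case~(1) of Lemma~\ref{lemma_local_form_f} (so $f=f(t)$ there), every critical point of $u$ has a neighbourhood missing $\Omega_x$, i.e.\ $\overline{\Omega_x}\cap\mathrm{Crit}(u)=\emptyset$. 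First I would record that $\partial\Omega_x\subseteq Z$: at $q\in\partial\Omega_x$ we have $q\notin\mathrm{Crit}(u)$, so $\nabla u\neq 0$ near $q$, and $\nabla f(q)$ is a limit of vectors orthogonal to $\nabla u$, hence $\nabla f(q)\perp\nabla u(q)$; were $q\in\Omega_t$, $\nabla f(q)$ would also be a nonzero multiple of $\nabla u(q)$ — impossible. If $\Omega_x=\emptyset$ there is nothing to prove, and if $\Omega_x=M$ the conclusions are immediate; so assume $\emptyset\neq\Omega_x\neq M$.

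The core step is the rigidity statement: no $u$-geodesic meets both $\Omega_x$ and $\Omega_t\cup\mathrm{int}(Z)$. Suppose $\gamma$ has a point in $\Omega_x$ and a point $b\in\Omega_t\cup\mathrm{int}(Z)$. If $\gamma$ passes through a critical point $p$ of $u$, then near $p$ one has $f=f(t)$ (case (1) of Lemma~\ref{lemma_local_form_f}), which forces nearby points of $\gamma$ into $\Omega_t$ (if $f$ is not locally constant at $p$) or into $\mathrm{int}(Z)$ (if it is); since a compact arc of $\gamma$ contains only finitely many critical points of $u$, we may pass to a sub‑arc from $a\in\Omega_x$ to some $b'\in\Omega_t\cup\mathrm{int}(Z)$ that avoids $\mathrm{Crit}(u)$, and hence lies in a single warped‑product tube $I\times N'$, $h=dt^2+v(t)^2 g_{N'}$. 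Being a $u$-geodesic, this arc is a $t$-curve, so it projects to a single point $x'\in N'$. At $x'$, equation~(\ref{eqn_Ric_gN}), together with $df\neq0$ (so $\nabla f$ is tangent to $N'$) and $\alpha\neq\frac{1}{n-2}$, forces $\mathrm{Ric}_{g_{N'}}(x')$ to be \emph{not} proportional to $g_{N'}(x')$; on the other hand, from the $\Omega_t$ end ($g_{N'}$ Einstein by case~(2a) of Lemma~\ref{lemma_local_form_f}) or the $\mathrm{int}(Z)$ end ($\mathrm{Ric}_h=Qh$ there, so the warped‑product formula (\ref{RicCalcWP}) makes $\mathrm{Ric}_{g_{N'}}$ proportional to $g_{N'}$) it \emph{is} proportional — a contradiction. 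The same computation shows that no $u$-geodesic through a critical point of $u$, and no $u$-geodesic through a point where $f$ is locally constant, can meet $\Omega_x$; and it follows that a $u$-geodesic meeting $\Omega_x$ stays in $\Omega_x\cup\partial Z$, where $df$ vanishes off $\Omega_x$, so that $f$ is in fact constant along it.

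To conclude, let $S$ be the union of all $u$-geodesics that meet $\Omega_x$. Then $\Omega_x\subseteq S$, $S$ contains no critical point of $u$, $S$ is open (continuous dependence of geodesics on initial data, valid since $S$ avoids $\mathrm{Crit}(u)$), and by the core step $S\cap(\Omega_t\cup\mathrm{int}(Z))=\emptyset$. If $S=M$ we are done: then $\Omega_t=\emptyset$, giving the first assertion, and $\mathrm{Crit}(u)\subseteq M=S$ forces $\mathrm{Crit}(u)=\emptyset$, giving the second. Thus it remains to prove $S$ is closed, i.e.\ that for each $q\in\partial\Omega_x$ the $u$-geodesic $\gamma_q$ through $q$ meets $\Omega_x$. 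If it did not, then $\gamma_q\subseteq\Omega_t\cup Z$, so at the fiber point $x'_q$ of $\gamma_q$ the tensor $\mathrm{Ric}_{g_{N'}}$ is proportional to $g_{N'}$ (by case~(2a) of Lemma~\ref{lemma_local_form_f}, or by $\mathrm{Ric}_h=Qh$ on the $\partial Z$- and $\mathrm{int}(Z)$-parts); comparing with the limit of (\ref{eqn_Ric_gN}) along nearby $\Omega_x$-meeting $u$-geodesics forces the fiber‑derivative of $f$ to vanish at $x'_q$, i.e.\ $\gamma_q$ sits over a critical point of the induced fiber function and $df$ vanishes to higher order along $\gamma_q$. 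Propagating this along $M$ and using that $df\not\equiv 0$ together with the ``$f$ constant along $u$-geodesics meeting $\Omega_x$'' fact then yields $\partial\Omega_x\subseteq S$. I expect this closedness step — pinning down the transition set where $f$ degenerates to being locally constant, so that the $\Omega_x$-stratum genuinely fills the connected manifold $M$ — to be the main obstacle, since it is the only place where the global hypothesis is essential and a purely local argument does not suffice. (For $n=3$ the statement is trivial: by Lemma~\ref{lemma_local_form_f} the case $f=f(x)$ does not occur, so $\Omega_x=\emptyset$.)
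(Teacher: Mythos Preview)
Your core step is sound: along a $u$-geodesic avoiding $\mathrm{Crit}(u)$, the fiber metric $g_N$ at the projected point $x'$ is fixed, and whether $\mathrm{Ric}_{g_N}(x')$ is proportional to $g_N(x')$ cleanly separates the $\Omega_t/\mathrm{int}(Z)$ side from the $\Omega_x$ side. The genuine gap is exactly where you locate it---closedness of $S$---and it is not a mere technicality: $S$ can fail to be closed. In the $f=f(x)$ regime, $f$ is a function on $N$, and (by (\ref{eqn_Ric_gN}) with $\alpha\neq\frac{1}{n-2}$) it may have critical points $x_0\in N$; Case~D in the proof of Lemma~\ref{lemma_local_form_f} is devoted to precisely this situation. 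The $u$-geodesic $I\times\{x_0\}$ then lies entirely in $Z$ and never meets $\Omega_x$, so it is excluded from $S$, even though $\Omega_t=\emptyset$ and $\mathrm{Crit}(u)=\emptyset$. Hence $S\subsetneq M$, and your clopen argument cannot conclude. The vague ``propagating this along $M$'' offers no mechanism for including such geodesics in $S$, and none is available: those geodesics simply do not meet $\Omega_x$.

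The paper avoids this by working transversally. Rather than saturating $\Omega_x$ by $u$-geodesics, it defines $A$ to be the set of points whose entire $u$-\emph{component} $L$ has $\nabla f\perp TL$ \emph{and} $|\nabla f|$ constant along $L$, together with $\mathrm{Crit}(u)$. The second condition is the crucial extra datum: it is a leafwise scalar invariant that passes to limits, so closedness of $A$ is immediate by continuity. It also resolves the delicate sub-case $\nabla f(p)=0$ in the openness argument, since it forces $\nabla f\equiv 0$ on all of $L$, hence $\mathrm{Ric}_{g_N}$ proportional to $g_N$ along all of $L$, placing one in Case~B on a tube containing the whole leaf. Since $\mathrm{Crit}(u)\subseteq A$ and any point of $\Omega_x$ witnesses $A\neq M$, connectedness gives $A=\emptyset$, whence both conclusions. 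The moral: saturation by the codimension-one leaves, with $|\nabla f|$ tracked along them, carries enough information across $\{df=0\}$ to make a clopen argument work; one-dimensional $u$-geodesics do not.
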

\begin{proof}
Define $A$ to be the set of points $p \in M$ that are either critical points of $u$ or 
regular points $p$ that satisfy the property
\begin{quotation}
$\nabla f$ is everywhere orthogonal to $L$ and
$|\nabla f|$ is constant along $L$, where $L$ is the $u$-component containing $p$,
\end{quotation}
which we denote by $(*)$.  As usual, $\nabla$ is the gradient with respect to $h$.

We show $A$ is open.  Let $p \in A$.  First, if $p$ is a critical point, then by the previous lemma, we have polar
coordinates around $p$ with $f=f(t)$.  On this coordinate neighborhood, $(*)$ clearly holds at every point besides $p$.  Otherwise,
$p$ is a regular point; let $L$ be the $u$-component containing it.  If $\nabla f(p) \neq 0$, then we are in the $f=f(t)$ case on
a neighborhood $U$ constructed in the previous lemma.  It is readily seen that $(*)$ holds on $U$.

If $\nabla f(p) = 0$, then by $(*)$, $\nabla f$ vanishes on $L$.  Then
on $L$, the Ricci curvature of $L$ is proportional to the metric on $L$, by (\ref{eqn_Ric_h}) and (\ref{eqn_Ric_h_WP}).
Let $U$ be a coordinate neighborhood of $p$ as in the previous lemma, so that
\[h=dt^2 +u'(t)^2 g_N\]
on $U$, where $N \subset L$. By case B of the proof of the previous lemma, $f=f(t)$ on $U$, 
and so $(*)$ holds on $U$.

Next, we show $A$ is closed.  Let $\{p_i\}$ be a sequence in $A$ converging to $p \in M$.
Since $A$ is open and the critical points of $u$ are isolated, we may assume without loss of generality
that each $p_i$ is a regular point of $u$.  If $p$ is a critical point of $u$, we are done.  Otherwise,
let $L$ be the $u$-component containing $p$, and similarly $L_i$ for $p_i$.  It is now clear from the definition that $(*)$ 
holds on $L$, since it holds on each $L_i$.

Thus, either $A$ is empty, or $A=M$.  If $f=f(x)$ and is non-constant on some open set, then $\nabla f$ is
tangent to a $u$-component, and so $A$ is empty.  In particular, $u$ has no critical points.  If, in addition,
$f=f(t)$ and is non-constant on an open set, then $A$ is non-empty, a contradiction.
\end{proof}

Now we complete the proof of the local and global classifications stated in the introduction.
\begin{proof}[Proof of Theorem \ref{thm_local_classification}]
From Lemmas \ref{lemma_local_form_h} and \ref{lemma_local_form_f}, $g_1=e^{\frac{2f_1}{n-2}} h_1$ is locally either of the form
(\ref{eqn_g_f(t)}) or  (\ref{eqn_g_f(x)}), (since in the $f_1=f_1(t)$ case we may do a change of variables
$ds = e^{\frac{f_1(t)}{n-2}} dt$).  Moreover, if $g_1$ is complete, then we have a global structure of the form 
(\ref{eqn_g_f(t)}) or  (\ref{eqn_g_f(x)}) by Lemmas \ref{lemma_f(t)_global} and \ref{lemma_f(x)_global}.
In the $f_1=f_1(x)$ case, (\ref{eqn_Ric_gN}) is satisfied, which implies by
Proposition \ref{prop_Ric_h} that $e^{\frac{2f_1}{n-3}} g_N$ is GQE with potential $f_1$.

Next, we prove that $h_2$ also has a warped product structure.  Since
\begin{eqnarray*}
h_1 = dt^2 + u'(t)^2 g_N,
\end{eqnarray*}
the metric $h_2$ satisfies 
\begin{eqnarray*}
\phi^{*}(h_2) = u^{-2}(t) dt^2 + (u' u^{-1})^2 g_N 
\end{eqnarray*}
Defining $dr = u^{-1}(t) dt$ we then obtain that
\[ h_2 = dr^2 + \left( \frac{d}{dr} (- u^{-1}) \right) ^2 g_N \]
up to isometry.  Note that since $\phi^* df_2 = df_1$, if $f_1= f_1(t)$ then $f_2= f_2(t)$ 
(and similarly if $f_1=f_1(x)$). Thus, $g_2$ has the form stated in the theorem.
\end{proof}

We also prove  the global result for compact manifolds.

\begin{proof}[Proof of Theorem \ref{thm_compact_case}]
By compactness and Lemma \ref{lemma_f(t)_global} (or alternatively by Tashiro's theorem), $h_1$ is a warped product
with one-dimensional base of type (III), and is therefore a rotationally-symmetric metric on a sphere.
Such metrics are conformal to a round metric, so certainly $(M_1, g_1)$ is conformally diffeomorphic to a round sphere, and the same goes from $(M_2, g_2)$.

Suppose $\alpha_1 \neq \frac{1}{n-2}$.  Since $u$ has a critical point by compactness,
we are in the case $f_1=f_1(t)$ by Lemma \ref{lemma_ft_fx}.   It then follows that $(M_1, g_1)$ and $(M_2, g_2)$ are rotationally-symmetric metrics on the sphere.
\end{proof}

\section{Examples}
\label{sec_examples} 
In the next two subsections we construct examples of generalized quasi-Einstein manifolds that are warped products 
over a one-dimensional base, both in the $f=f(t)$ and $f=f(x)$ cases.  By Proposition \ref{prop_Ric_h}, it suffices
to construct metrics $h$ and functions $f$ satisfying (\ref{eqn_Ric_h}).  In the third subsection we further show that all of these examples admit  one-parameter families of local conformal changes that preserve the GQE structure.  In many cases  these conformal changes are global. 

\subsection{The $f(t)$ case}
\label{sec_f(t)}
 We start with an arbitrary  Riemannian manifold $(U,h)$ of dimension $n \geq 3$ of the form 
\begin{eqnarray*}
U &=& (a,b) \times N \\
h &=& dt^2 + u'(t)^2 g_N 
\end{eqnarray*}
for some $u(t)$ 
with $u'(t)>0$ on $(a,b)$. Assume that $N$ is an Einstein metric, $\mathrm{Ric}_{g_N} = \mu g_N$ and that  $\alpha:(a,b) \rightarrow \mathbb{R}$ is also a smooth function of $t$, such that $\alpha(t) \neq \frac{1}{n-2}$ for any $t$.   
From example \ref{example_f(t)} we see that there is  metric of the form $g = e^{\frac{2f}{n-2}} h$  for some function $f=f(t)$   if $u$ satisfies the following differential inequality. 
\begin{equation}
\label{eqn_diff_ineq}
\frac{1}{ \frac{1}{n-2} - \alpha} \left(  -(n-2)  \frac{u'''}{u'} - \frac{\mu - (n-2) (u'')^2}{(u')^2}  \right) \geq 0 
\end{equation}
on $(a,b)$. 

\begin{remark} This shows that, given a warping function $u>0$ such that the derivatives of $u$ are bounded on  $(a,b)$, and  any $\alpha\neq \frac{1}{n-2}$, there is choice of Einstein metric $g_N$ so that the metric admits a GQE structure on $(a,b)$.  \end{remark}

From formula (\ref{eqn_f_prime_Ric}) an equivalent  way to state this result is as follows.
\begin{proposition}
Let  $(U,h)$, $(N,g_N)$ and $\alpha$ be as above.   Then  $(U,e^{\frac{2f}{n-2}} h, f, \alpha, \lambda)$ is a GQE manifold for some $\lambda$ and $f$ 
if and only if 
\begin{enumerate} 
\item $\mathrm{Ric}_h\left( \frac{\partial}{\partial t},\frac{\partial}{\partial t} \right)  \geq \mathrm{Ric}_h\left( X,X \right)$ on $(a,b)$, when $\alpha < \frac{1}{n-2}$, and 
\item $\mathrm{Ric}_h\left( \frac{\partial}{\partial t},\frac{\partial}{\partial t} \right)  \leq \mathrm{Ric}_h\left( X,X \right)$ on $(a,b)$, when $\alpha > \frac{1}{n-2}$.
\end{enumerate} 
\end{proposition}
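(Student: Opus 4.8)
The plan is to recognize the statement as nothing more than a reformulation of identity (\ref{eqn_f_prime_Ric}): once the warped-product structure of $h$ is fixed, equipping $g=e^{\frac{2f}{n-2}}h$ with a GQE structure having a prescribed $\alpha=\alpha(t)$ amounts to solving (\ref{eqn_f_prime_Ric}) for $f$, and that is possible exactly when the right-hand side has the correct sign, since $f'(t)^2\ge 0$. I would begin by recording, from Proposition \ref{prop_Ric_h}, that $g=e^{\frac{2f}{n-2}}h$ carries a GQE structure with data $(f,\alpha,\lambda)$ for some $\lambda$ if and only if $h$ satisfies (\ref{eqn_Ric_h}) with the same $f$, the same $\alpha$, and some function $Q$; and from the warped-product Ricci formula (\ref{eqn_Ric_h_WP}) together with $\mathrm{Ric}_{g_N}=\mu g_N$, that $\mathrm{Ric}_h$ is block-diagonal with respect to the splitting $\mathbb{R}\frac{\partial}{\partial t}\oplus TN$ and has fiber block equal to a function of $t$ times $h|_{TN}$. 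Throughout, $X$ denotes a unit vector field tangent to $N$.

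For the ``if'' direction I would argue by construction. Assuming, say, $\alpha<\frac{1}{n-2}$ and $\mathrm{Ric}_h(\frac{\partial}{\partial t},\frac{\partial}{\partial t})\ge\mathrm{Ric}_h(X,X)$ on $(a,b)$ (the case $\alpha>\frac{1}{n-2}$ is symmetric, with the inequality reversed), the quotient
\[ R(t):=\frac{\mathrm{Ric}_h\!\left(\tfrac{\partial}{\partial t},\tfrac{\partial}{\partial t}\right)-\mathrm{Ric}_h(X,X)}{\tfrac{1}{n-2}-\alpha(t)} \]
is non-negative, so I would set $f(t)=\int_{t_0}^{t}\sqrt{R(\tau)}\,d\tau$ and $Q(t)=\mathrm{Ric}_h(X,X)$, and then verify (\ref{eqn_Ric_h}) componentwise: the fiber part holds by the choice of $Q$, the mixed part is automatic because $df(X)=0$, and the $\frac{\partial}{\partial t}$-part is exactly (\ref{eqn_f_prime_Ric}) by definition of $R$. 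Proposition \ref{prop_Ric_h} then yields that $g=e^{\frac{2f}{n-2}}h$ is GQE, with $\lambda$ recovered by solving (\ref{Qeqn}) for $\lambda$. This is precisely the reformulation of the differential inequality (\ref{eqn_diff_ineq}) announced in the text.

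For the ``only if'' direction, I would start from a GQE structure $(U,e^{\frac{2f}{n-2}}h,f,\alpha,\lambda)$ with $\alpha=\alpha(t)$, so that $h$ satisfies (\ref{eqn_Ric_h}) and hence
\[ df\otimes df=\frac{1}{\tfrac{1}{n-2}-\alpha}\left(\mathrm{Ric}_h-Q\,h\right). \]
The right-hand side is block-diagonal with fiber block proportional to $h|_{TN}$, whereas the left-hand side has rank at most one and $\dim N=n-1\ge 2$; this forces the fiber block to vanish, so $\nabla f$ is parallel to $\frac{\partial}{\partial t}$ and $f=f(t)$ (this is the argument of Cases A and B in the proof of Lemma \ref{lemma_local_form_f}, and it also shows $Q=Q(t)$). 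Evaluating (\ref{eqn_Ric_h}) on $\frac{\partial}{\partial t}$ and on $X$ and subtracting then recovers (\ref{eqn_f_prime_Ric}), namely $\left(\tfrac{1}{n-2}-\alpha\right)f'(t)^2=\mathrm{Ric}_h(\frac{\partial}{\partial t},\frac{\partial}{\partial t})-\mathrm{Ric}_h(X,X)$; since the left-hand side has the sign of $\tfrac{1}{n-2}-\alpha$, the inequalities (1) and (2) follow according to whether $\alpha$ is less than or greater than $\frac{1}{n-2}$.

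I do not expect any essential obstacle; the only point needing care is the regularity of $f$ in the ``if'' direction, since a non-negative smooth function of one variable need not admit a smooth square root — and hence $f$ need not be smooth — at a zero of the ``wrong'' order. For the local examples at hand this is harmless: one may arrange strict inequality (so $R>0$ and $f$ is smooth, as in the Remark following (\ref{eqn_diff_ineq})), restrict to the open locus $\{R>0\}$, or simply work away from the isolated zeros of $R$. Everything else reduces to bookkeeping with (\ref{eqn_f_prime_Ric}).
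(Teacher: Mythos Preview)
Your proposal is correct and follows essentially the same approach as the paper, which treats the proposition as an immediate reformulation of (\ref{eqn_f_prime_Ric}): the GQE structure exists if and only if one can solve $\left(\tfrac{1}{n-2}-\alpha\right)f'(t)^2 = \mathrm{Ric}_h(\partial_t,\partial_t)-\mathrm{Ric}_h(X,X)$ for a real $f'$, which is exactly the sign condition. You are more thorough than the paper on two points---you supply the rank argument forcing $f=f(t)$ (which the paper takes as understood from the section's context) and you flag the smooth-square-root caveat (which the paper silently ignores)---but the core argument is identical.
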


\begin{example} We consider the concrete example 
\[ h = dt^2 + e^{2t} g_N \]
so that 
\[\left(\frac{1}{n-2}-\alpha\right)(f')^2 = - \mu e^{-2t}\]
by (\ref{eqn_f_prime_Ric}). 
If $g_N$ has Einstein constant $\mu < 0$, we 
may choose $\alpha = 0$, for instance, so that $f$ equals
\[ f(t) = \pm \sqrt{-\mu(n-2)} e^{-t} + C.\] 
If $\mu > 0$ we can choose $\alpha=\frac{2}{n-2}$, for instance, and so 
$f(t) = \pm \sqrt{\mu(n-2)} e^{-t} + C$. 
Finally, if $\mu=0$, then $h$ is Einstein.
\end{example}

This construction also works in polar coordinate neighborhoods. 
 \begin{proposition} 
 Suppose that 
 \begin{eqnarray*}
U &=& B_R(0) \\
h &=& dt^2 + u'(t)^2 g_{S^{n-1}}
\end{eqnarray*}
is a polar coordinate neighborhood of a smooth metric $h$,  and suppose that $\alpha=\alpha(t)$ is a smooth
function on $U$, never equal to $\frac{1}{n-2}$. Then $(U,e^{\frac{2f}{n-2}} h, f, \alpha, \lambda)$ is a GQE manifold for some $\lambda$ and $f$ 
if and only if (\ref{eqn_diff_ineq}) holds on $[0,R)$ with $\mu=n-2$.
\end{proposition}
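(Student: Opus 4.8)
The plan is to reduce everything, via Proposition~\ref{prop_Ric_h}, to the solvability of the single first-order equation (\ref{eqn_f_prime_Ric}). Since $h=e^{-\frac{2f}{n-2}}g$, the manifold $(U,e^{\frac{2f}{n-2}}h,f,\alpha,\lambda)$ is GQE for some $\lambda$ precisely when $h$ and $f$ satisfy (\ref{eqn_Ric_h}) for some function $Q$, and then $\lambda$ is recovered from (\ref{Qeqn}). So the real content is that such an $f$ exists (and is smooth) if and only if (\ref{eqn_diff_ineq}) holds on $[0,R)$.

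For the ``only if'' direction, suppose $h$ and $f$ satisfy (\ref{eqn_Ric_h}). Because $h$ is a smooth metric on a ball in polar form, the warping function $u'$ vanishes at $t=0$, so the origin is a critical point of $u$; the argument of Case~A in the proof of Lemma~\ref{lemma_local_form_f} --- which uses only that $h$ is a type~(II) warped product with round fiber and satisfies (\ref{eqn_Ric_h}), not the presence of a conformal diffeomorphism --- then forces $f=f(t)$ on $U$. (Where $\nabla f$ vanishes, (\ref{eqn_Ric_h}) makes $\mathrm{Ric}_h$ proportional to $h$, so $df$ has no fiber component there either; elsewhere $\nabla f$ spans the one-dimensional eigenspace of $\mathrm{Ric}_h$, which also contains $\frac{\partial}{\partial t}$ by (\ref{RicCalcWP}).) With $f=f(t)$, subtracting the $(X,X)$-component of (\ref{eqn_Ric_h}) from its $\left(\frac{\partial}{\partial t},\frac{\partial}{\partial t}\right)$-component eliminates $Q$ and gives exactly (\ref{eqn_f_prime_Ric}) with $v=u'$ and $\mu=n-2$; since its left-hand side equals $\left(\frac{1}{n-2}-\alpha\right)f'(t)^2$, dividing by $\frac{1}{n-2}-\alpha$ exhibits the left-hand side of (\ref{eqn_diff_ineq}) as $f'(t)^2\geq 0$.

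For the ``if'' direction, assume (\ref{eqn_diff_ineq}). Then the right-hand side of (\ref{eqn_f_prime_Ric}), divided by $\frac{1}{n-2}-\alpha$, is a nonnegative function $F(t)$ on $[0,R)$; I would take $f$ to be an antiderivative of a suitable square root of $F$, set $Q(t):=\mathrm{Ric}_h(X,X)$, and check --- by the same computation as in Example~\ref{example_f(t)}, which goes through verbatim with $g_N=g_{S^{n-1}}$ --- that $h$ and $f$ satisfy (\ref{eqn_Ric_h}). Proposition~\ref{prop_Ric_h} then produces the GQE structure, with $\lambda$ given by (\ref{Qeqn}).

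The one delicate point --- and the main obstacle --- is the smoothness of $f$ at the center $t=0$, where the formula for $f'(t)^2$ carries $u'$ and $(u')^2=v^2$ in denominators. Here smoothness of $h$ at the center forces $v=u'$ to extend to an odd smooth function of $t$ with $v'(0)=1$, which is precisely why the fiber Einstein constant must be $\mu=n-2$: only then does the numerator $\mu-(n-2)(v')^2=(n-2)\bigl(1-(v')^2\bigr)$ vanish at $0$ to the same (second) order as $v^2$. Consequently $v''/v$ and $(\mu-(n-2)(v')^2)/v^2$ are even and smooth near $0$, so $F$ is even and smooth there, and --- being the difference of the radial and spherical Ricci eigenvalues, which agree at the center where $\mathrm{Ric}_h$ is proportional to $h$ --- $F$ vanishes to even order at $0$; writing $F(t)=t^{2}\tilde F(t)$ with $\tilde F\geq 0$ even and smooth and taking $f'(t)=t\sqrt{\tilde F(t)}$ (choosing the sign analogously at any interior zero of $F$) yields a smooth $f$. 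In the reverse direction no such issue arises, since there $f$ is part of the given GQE data; and away from the origin the whole argument is identical to the type~(I) case treated in the preceding proposition.
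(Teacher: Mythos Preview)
Your approach is essentially the paper's, just more fleshed out: the paper's proof is a brief sketch that defines $f$ on $(0,R)$ via (\ref{eqn_f_prime_Ric}), observes that smoothness of $h$ forces the radial and spherical Ricci eigenvalues to coincide at the center (so that $f'(t)^2\to 0$), and then simply asserts that $f$ extends smoothly to the origin with a critical point there. You additionally supply the ``only if'' direction (invoking the Case~A eigenspace argument to force $f=f(t)$), explain why $\mu=n-2$ is the right fiber constant, and correctly identify smoothness at the center as the main issue; your square-root argument there is not quite airtight in general --- a smooth nonnegative $\tilde F$ need not admit a smooth square root --- but the paper's one-line assertion glosses over the same point.
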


\begin{proof}
Define $f(t)$ to solve (\ref{eqn_f_prime_Ric}) on $(0,R)$ (where $v=u'$).
We know that  $u' \rightarrow 0$ as $t \rightarrow 0$.  From the smoothness of the metric, it also follows that  
  \[ \mathrm{Ric}_h\left( \frac{\partial}{\partial t},\frac{\partial}{\partial t} \right) \longrightarrow \mathrm{Ric}_h\left( X,X \right) \quad   \text{as} \quad t \rightarrow 0. \]
Since $\alpha \nrightarrow \frac{1}{n-2}$ as $t \rightarrow 0$, $f$ extends to a smooth function on $U$ with a critical point 
at $t=0$.  
\end{proof}

If we do not prescribe $\alpha$, these propositions give the following corollary.  
\begin{corollary}
Let $(U,h)$ be any warped product over a one-dimensional base with fiber metric $g_N$ Einstein.   
If $h$  has non-constant curvature at almost every point, then there are functions $\alpha=\alpha(t)$, $f=f(t)$, and $\lambda=\lambda(t)$ such that $(U,e^{\frac{2f}{n-2}} h, f, \alpha, \lambda)$ is a \emph{complete} generalized quasi-Einstein structure.     \end{corollary}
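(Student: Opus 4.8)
The plan is to exploit the freedom in the choice of $\alpha$: rather than prescribing $\alpha$ and solving (\ref{eqn_f_prime_Ric}) for $f$ — which would force me to control a square root at the zeros of $\mathrm{Ric}_h(\partial_t,\partial_t)-\mathrm{Ric}_h(X,X)$ — I will prescribe $f$ and let $\alpha$ be \emph{defined} by (\ref{eqn_f_prime_Ric}). This makes the hypotheses of the preceding propositions automatic and leaves enough room in $f$ to force completeness.

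Write $h=dt^2+u'(t)^2 g_N$ as in Definition \ref{def_wp}, with $g_N$ a round sphere in cases (II) and (III), and set
\[
\rho(t)=\mathrm{Ric}_h\Big(\tfrac{\partial}{\partial t},\tfrac{\partial}{\partial t}\Big)-\mathrm{Ric}_h(X,X)=\frac{(n-2)\big((u'')^2-u'u'''\big)-\mu}{(u')^2},
\]
a smooth function of $t$; the assumption that $h$ has non-constant curvature almost everywhere guarantees $\rho\neq0$ for a.e.\ $t$. I then choose a smooth $f=f(t)$ on $U$ — even in $t$ near any pole with $f''\neq0$ there, and with $f'$ nonvanishing wherever $\rho\neq0$ — and define
\[
\alpha(t):=\frac{1}{n-2}-\frac{\rho(t)}{f'(t)^2}.
\]
Since $\rho=O(t^2)$ near a pole while $f'^2$ vanishes there to order exactly two, the quotient extends smoothly across the poles, so $\alpha$ is a smooth function on $U$ with $\alpha\neq\frac{1}{n-2}$ a.e.; moreover $(\tfrac{1}{n-2}-\alpha)f'^2=\rho$, so (\ref{eqn_f_prime_Ric}) holds with right-hand side $f'^2\ge0$. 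Hence the sign conditions in the proposition(s) above hold automatically — in the polar case, (\ref{eqn_diff_ineq}) reduces to $f'^2\ge0$ — and, by Example \ref{example_f(t)} together with those propositions, $(U,\,e^{\frac{2f}{n-2}}h,\,f,\,\alpha,\,\lambda)$ is a GQE manifold for the resulting $\lambda=\lambda(t)$.

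It then remains to arrange that $g:=e^{\frac{2f}{n-2}}h$ is complete. Introducing $s(t)=\int_{t_0}^t e^{\frac{f(r)}{n-2}}\,dr$ turns $g$ into the warped product $ds^2+\big(u'e^{\frac{f}{n-2}}\big)^2 g_N$ over the base interval $J=s(I)$. If $h$ is complete, I take $f$ bounded — e.g.\ $f=\arctan t$ in case (I) (no poles), $f=\arctan(t^2)$ in case (II), $f=\sin^2(t/2)$ in case (III) — so that the conformal factor is pinched between two positive constants and $g$ is complete together with $h$. If $I$ has a finite endpoint, say $b<\infty$, I instead force $f(t)\to+\infty$ there quickly, e.g.\ $f(t)=-3(n-2)\log(b-t)$ near $b$, so that $\int^b e^{\frac{f}{n-2}}\,dt=+\infty$ and $J$ becomes unbounded; here $f'=\tfrac{3(n-2)}{b-t}\neq0$ near $b$, as required. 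Splicing such local prescriptions into one smooth $f$ and checking, exactly as in the proof of Lemma \ref{lemma_f(t)_global}, that the resulting unbounded or capped ends give a metric of type (I), (II), or (III) finishes the argument.

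The main obstacle is precisely this endpoint analysis. On a base interval with two finite endpoints, completeness forces $f\to+\infty$ at \emph{both} ends, so $f'$ must change sign and vanish at some interior point $t^*$; there $\alpha=\frac{1}{n-2}-\rho/f'^2$ stays finite only if $\rho(t^*)=0$, which cannot be arranged in general. The way around this is to cap such an end off with a smooth pole — possible precisely when the fiber is a round sphere — so that $g$ becomes a metric of type (II) or (III) near that end, the smoothness check being of the same nature as in Lemmas \ref{lemma_hessian} and \ref{lemma_f(t)_global}. (Accordingly, the corollary is to be read for warped products that admit such a completion, in particular whenever $h$ is itself complete.)
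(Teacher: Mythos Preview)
Your approach inverts the paper's: rather than prescribing $\alpha$ (with sign dictated by $\rho:=\mathrm{Ric}_h(\partial_t,\partial_t)-\mathrm{Ric}_h(X,X)$) and then solving (\ref{eqn_f_prime_Ric}) for $f$, you prescribe $f$ freely and \emph{define} $\alpha=\frac{1}{n-2}-\rho/(f')^2$.  This is a natural dual strategy and, when it works, sidesteps any square-root smoothness issues.  The paper's choice, by contrast, allows $\alpha$ to touch $\frac{1}{n-2}$ exactly on the zero set of $\rho$, flexibility you have traded away.

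The obstacle you flag at the end is genuine, and your workaround does not close it.  If $I=(a,b)$ is a bounded interval on which $\rho$ has no zeros---for instance $h=dt^2+g_N$ on $(0,1)\times N$ with $g_N$ Einstein of nonzero constant, where $\rho\equiv-\mu$---then completeness of $g=e^{\frac{2f}{n-2}}h$ (with $g_N$ not a round sphere) forces $\int e^{f/(n-2)}$ to diverge at both endpoints, hence $f\to+\infty$ at both, hence $f'$ vanishes at some interior point where $\rho\neq0$, and your $\alpha$ blows up there.  ``Capping off with a pole'' does not rescue this: the fiber $(N,g_N)$ is part of the data and need not be a round sphere, and even when it is, the warping function $u'\,e^{f/(n-2)}$ for $g$ is strictly positive and cannot tend to zero at a finite $s$-value unless $f\to-\infty$ there, which is incompatible with the divergence you need at that end.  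Your closing parenthetical effectively narrows the hypotheses, which is not a proof of the stated corollary.  It is worth noting that the paper's own argument---sending $\alpha\to\frac{1}{n-2}$ near the endpoints so that $|f'|$ blows up---is terse about how the \emph{sign} of $f'$ is arranged so that $s$ diverges at \emph{both} ends when $\rho$ never vanishes; so you have in fact put your finger on a real subtlety in the bounded-interval case rather than a defect peculiar to your method.
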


\begin{proof}
First define $\alpha(t)$ so that it has the following properties:
\begin{itemize}
\item  $\alpha(t)  < \frac{1}{n-2}$  at points in $(a,b)$ where,  $\mathrm{Ric}_h\left( \frac{\partial}{\partial t},\frac{\partial}{\partial t} \right)  \geq \mathrm{Ric}_h\left( X,X \right)$, 
\item $\alpha(t)  > \frac{1}{n-2}$ at points in $(a,b)$ where,  $\mathrm{Ric}_h\left( \frac{\partial}{\partial t},\frac{\partial}{\partial t} \right)  \leq \mathrm{Ric}_h\left( X,X \right)$, and 
\item $\alpha(t)  = \frac{1}{n-2}$ at points in $(a,b)$ where,  $\mathrm{Ric}_h\left( \frac{\partial}{\partial t},\frac{\partial}{\partial t} \right)  = \mathrm{Ric}_h\left( X,X \right)$. 
 \end{itemize}
Defining $f(t)$ via (\ref{eqn_f_prime_Ric}), we have a GQE manifold structure on $(a,b) \times N$. By
the hypothesis on curvature, $f'$ vanishes only on a set of measure zero.
If $h$ is a type 
(I) warped product, this gives a GQE structure on $U$.   Then we can also  choose $\alpha(t) \rightarrow \frac{1}{n-2}$  fast enough as $t$ 
limits to $a$ and $b$ so that we can make $f'(t)$ blow up at the endpoints  so that $s(t) = \int_0^t e^{\frac{f}{n-2}} dt$ limits to $-\infty$ as $t \rightarrow a^+$ and limits to $\infty$ as $t \rightarrow b^-$.   This implies that $e^{\frac{2f}{n-2}} h$ is a complete metric  of type (I). 

When we have a type (II) or (III) warped product, we also must modify $\alpha$ so that $f$ extends to a smooth function in the polar coordinate neighborhood.  To do this, choose $\alpha$ such that  
\[ \mathrm{Ric}_h\left( \frac{\partial}{\partial t},\frac{\partial}{\partial t} \right) - \mathrm{Ric}_h\left( X,X \right) = o\left( \frac{1}{n-2} - \alpha\right) \]
as  $t \rightarrow a$ or $b$, 
and so that  the same condition holds for all derivatives.    

In the type (III) case this gives us the desired complete metric.  In the type (II) case we obtain a complete metric by controlling the asymptotics of $\alpha$ in same way as in the type (I) case. 
\end{proof} 

\begin{remark}  As this construction shows, the function $\alpha(t)$ is not unique. \end{remark}

\begin{remark}
If $n=3$,   it is possible to have solutions in the $f=f(t)$ case such that $\alpha$ is a function of 
both $t$ and $x$.  By  Schur's lemma applied to $g_N$, these examples are not possible in dimension above three. 

For example, let $\Sigma$ be any surface with Gauss curvature $\mu(x)$.  The metric
\[h = dt^2 + \cosh^2(t) g_\Sigma\]
with $f(t)=\int_0^t \frac{dr}{\sqrt{\cosh(r)}}$ and $\alpha = 1+\frac{1+\mu(x)}{\cosh(t)}$ satisfies (\ref{eqn_Ric_h}).
Moreover, $f$ is bounded, so $g=e^{2f}h$ is complete, provided $\Sigma$ is chosen to be complete.
\end{remark}

\subsection{The $f(x)$ case}
\label{sec_f(x)}
In this section we construct non-Einstein examples in the  $f=f(x)$ case with dimension $n \geq 4$.  The approach is to begin with a
metric $g_N$ on an $(n-1)$-manifold $N$ satisfying
\[\mathrm{Ric}_{g_N} = \left( \frac{1}{n-2} -\alpha \right) df \otimes df + Pg_N\]
for some function $f$ on $N$ and constants $\alpha \neq \frac{1}{n-2}$ and $P$.

\begin{example} 
\label{example_product}
The simplest example with $P$ constant is a product
manifold with $g_N = dy^2 + g_F$ where $g_F$ is an Einstein
$(n-2)$-manifold with Einstein constant $P$.  If $P=0$,
then we obtain a Ricci-flat metric so $f$ is constant.  However, if
$P<0$ we may choose $\alpha=0$ to obtain a solution with
$f$ a linear function of $y$. We also obtain solutions when $P>0$ by
letting $\alpha> \frac{1}{n-2}$.

We point out that one may obtain examples with $f$ bounded in the case
$P\neq 0$ by choosing $\alpha=\alpha(y)$
appropriately.  In particular, if $g_F$ is chosen to be complete, one
may find complete GQE
metrics $g=e^{\frac{2f}{n-2}} h$ in the $f=f(x)$ case using the
construction below.

To construct a nontrivial example in the $P=0$ case, we take the
complete metric $g_N = dy^2 + (1+y^2)g_\Sigma$,
where $\Sigma$ is a 2-sphere with constant curvature 1. The Ricci
curvature of $g_N$ is:
\[\mathrm{Ric}_{g_N} = -\frac{2}{(1+y^2)^2} dy^2.\]
Choosing $\alpha = 5/2$ and $f(y)=\arctan(y)$ ensures that
(\ref{eqn_g_N_P}) holds with $P=0$ and $n=4$.  Moreover,
since $f$ is bounded, $g=e^{f} h$ will be complete.
\end{example}
Let $Q \in \R$ and define
\[h = dt^2 + v(t)^2 g_N\]
where $v(t)$ is a nonzero solution to $v''=-\frac{Q}{n-1}v$ that is nonnegative over the range of $t$.
Direct calculation shows that
\[\mathrm{Ric}_h =  \left( \frac{1}{n-2} -\alpha \right) df \otimes df + Qg_N\]
where $v$ solves (\ref{eqn_vPQ}), restated here for convenience:
\[(v')^2 + \frac{Q}{n-1}v^2 = \frac{P}{n-2}.\]
By rescaling $g_N$, we normalize so that $P=n-2, 0, $ or $-(n-2)$.
There are six cases, up to an affine change of variable in $t$:
\begin{enumerate}
\item If $P = n-2$, there are three cases:
\begin{enumerate}
\item $v(t) = \sin(t), Q= n-1$, 
\item $v(t) = t, Q=0$, and
\item $v(t) = \sinh(t), Q= -(n-1).$
\end{enumerate}

\item If $P = 0$, there are two cases:
\begin{enumerate}
\item $v(t) = 1, Q=0$ and 
\item $v(t) = e^t, Q= -(n-1)$.
\end{enumerate}

\item If $P = -(n-2)$, there is only one case:
\begin{enumerate}
\item $v(t) = \cosh(t), Q= -(n-1)$.
\end{enumerate}

\end{enumerate}
\subsection{Conformal changes}
For the examples constructed in the last two subsections, we show that  around every point the  metric admits  local non-homothetic conformal changes preserving the GQE structure.

\begin{proposition} \label{prop_confchange_existence}
Let $h_1 = dt^2 + u'(t)^2 g_N$, and suppose that there are functions $f,$ $Q_1,$ and $\alpha$ such that 
\begin{equation}
\label{eqn_Ric_h1}
 \mathrm{Ric}_{h_1} = \left( \frac{1}{n-2} -\alpha \right) df \otimes df + Q_1h_1.
\end{equation}
Then $h_2 = u^{-2} h_1$ satisfies
\begin{equation}
\label{eqn_Ric_h2}
 \mathrm{Ric}_{h_2} = \left( \frac{1}{n-2} -\alpha \right) df \otimes df + Q_2h_2,
\end{equation}
where $Q_2 = (n-1) \left(  \frac{Q_1}{n-1}u^2 + 2 u'' u  - (u')^2 \right) $. 
\end{proposition}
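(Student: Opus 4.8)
The plan is to reduce the statement to the conformal transformation law for the Ricci tensor — the very formula already used in the proof of Lemma~\ref{lemma_local_form_h} — combined with the Obata-type equation satisfied by the warping antiderivative $u$. Throughout, $\alpha$ and $f$ are kept fixed; the only thing that changes is the pure-trace part of the equation.

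First I would record two facts about $h_1 = dt^2 + u'(t)^2 g_N$. Since $u$ is an antiderivative of the warping function $v = u'$, Proposition~\ref{lemma_warped_prod_hess} gives $\mathrm{Hess}_{h_1} u = \frac{\Delta_{h_1} u}{n} h_1$; and, exactly as in the proof of Lemma~\ref{lemma_hessian}, $\Delta_{h_1} u = n u''$ while $\nabla u = u' \,\frac{\partial}{\partial t}$, so that $|\nabla u|^2_{h_1} = (u')^2$ and $\mathrm{Hess}_{h_1} u = u'' h_1$. Then I would invoke the conformal change formula for the Ricci tensor under $h_1 \mapsto h_2 = u^{-2} h_1$:
\[
\mathrm{Ric}_{h_2} = \mathrm{Ric}_{h_1} + (n-2)\frac{\mathrm{Hess}_{h_1} u}{u} + \Bigl( u^{-1}\Delta_{h_1} u - (n-1)\, u^{-2}\,|\nabla u|^2_{h_1}\Bigr) h_1 .
\]
Substituting the identities above, all three correction terms are proportional to $h_1$ and combine to $(n-1)\bigl(\frac{2u''}{u} - \frac{(u')^2}{u^2}\bigr) h_1$: the coefficient $n-2$ from the Hessian term and the coefficient $n$ coming from $\Delta_{h_1} u = n u''$ add to $2(n-1)$.

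Finally, substituting the hypothesis (\ref{eqn_Ric_h1}) for $\mathrm{Ric}_{h_1}$ and writing $h_1 = u^2 h_2$ in the pure-trace terms, the $df \otimes df$ contribution — together with its coefficient $\frac{1}{n-2} - \alpha$, with the same function $\alpha$ — passes through untouched, because every new term produced by the rescaling is a multiple of the metric. Reading off the coefficient of $h_2$ gives
\[
Q_2 = Q_1 u^2 + (n-1)\bigl(2 u'' u - (u')^2\bigr) = (n-1)\Bigl( \tfrac{Q_1}{n-1} u^2 + 2 u'' u - (u')^2 \Bigr),
\]
which is the asserted formula, so $h_2$ satisfies (\ref{eqn_Ric_h2}).

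I do not anticipate any real obstacle: the argument is a direct computation. The only points requiring care are the bookkeeping of the normalization constants so that the pure-trace terms combine as claimed, and the observation that $|\nabla u|^2_{h_1} = (u')^2$ with $\nabla u$ parallel to $\partial/\partial t$ (so that $u$ genuinely solves (\ref{eqn_hessian}) with respect to $h_1$), both of which are already established in Section~\ref{sec_warped_prod}.
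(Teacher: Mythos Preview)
Your proposal is correct and follows essentially the same approach as the paper: both invoke the Hessian identity $\mathrm{Hess}_{h_1} u = \frac{\Delta_{h_1} u}{n} h_1$ from Proposition~\ref{lemma_warped_prod_hess}, apply the same conformal change formula for the Ricci tensor under $h_1 \mapsto u^{-2} h_1$, observe that all correction terms are pure trace so the $df\otimes df$ part is unchanged, and then substitute $\Delta_{h_1} u = n u''$ and $|\nabla u|_{h_1}^2 = (u')^2$ to read off $Q_2$. The only difference is cosmetic: you substitute $\Delta_{h_1} u = n u''$ before collecting terms, whereas the paper carries $\Delta_{h_1} u$ symbolically until the final line.
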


\begin{proof}
This essentially follows from the observation that the steps in the proof of Lemma \ref{lemma_local_form_h} can be reversed. 
Namely, we know that, for the metric $h_1$, we have 
\[ \mathrm{Hess}_{h_1} u = \frac{\Delta_{h_1} u}{n} h_1.\]
Therefore, the formula for the change of the Ricci tensor tells us that: 
\begin{eqnarray*}
\mathrm{Ric}_{h_2} &=& \mathrm{Ric}_{h_1} +  (n-2)  \frac{\mathrm{Hess}_{h_1} u }{u}  + \left( u^{-1}\Delta_{h_1} u  - (n-1) |\nabla u|_{h_1}^2 u^{-2} \right) h_1\\
&=&  \mathrm{Ric}_{h_1}  + (n-1) \left( \frac{2}{n} u^{-1}  \Delta_{h_1} u -  |\nabla u|_{h_1}^2 u^{-2} \right) h_1  \\
&=& \left( \frac{1}{n-2} -\alpha \right) df \otimes df + (n-1) \left( \frac{ Q_1}{n-1}u^2 + \frac{2}{n}  u \Delta_{h_1} u  -  |\nabla u|_{h_1}^2 \right) h_2.\end{eqnarray*}
The formula then follows from  $\Delta_{h_1} u = n u''$ and  $|\nabla u|_{h_1}^2= (u')^2$ on $h_1$. 
\end{proof}

As a corollary we obtain conformal diffeomorphisms between the generalized quasi-Einstein manifolds constructed in the previous subsections.  
\begin{corollary} 
\label{cor_GQE_conformal}
Let $(U,e^{\frac{2f}{n-2}} h, f, \alpha, \lambda_1)$ be a GQE manifold
with $h_1 = dt^2 + u'(t)^2 g_N$ for some function $u(t)$. Then there is a function $\lambda_2$ such that  
$(U, u^{-2} e^{\frac{2f}{n-2}}h, f, \alpha,  \lambda_2)$ is also a GQE manifold.
\end{corollary}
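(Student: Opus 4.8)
The plan is to simply chain together the two conformal reformulations that have just been established, with no new computation required. First I would apply Proposition \ref{prop_Ric_h} in the forward direction: since $(U, e^{\frac{2f}{n-2}}h, f, \alpha, \lambda_1)$ is a GQE manifold, the metric $h = h_1 = dt^2 + u'(t)^2 g_N$ satisfies
\[ \mathrm{Ric}_{h_1} = \left(\tfrac{1}{n-2} - \alpha\right) df \otimes df + Q_1\, h_1 \]
for a suitable function $Q_1$ (namely the one given by \ref{Qeqn} with $g = e^{\frac{2f}{n-2}}h_1$ and $\lambda = \lambda_1$). This is exactly the hypothesis \ref{eqn_Ric_h1} of Proposition \ref{prop_confchange_existence}, and since $h_1$ is literally of the warped-product form $dt^2 + u'(t)^2 g_N$ demanded there, that proposition applies verbatim.

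Proposition \ref{prop_confchange_existence} then tells us that $h_2 := u^{-2} h_1$ satisfies \ref{eqn_Ric_h2}, i.e.
\[ \mathrm{Ric}_{h_2} = \left(\tfrac{1}{n-2} - \alpha\right) df \otimes df + Q_2\, h_2, \qquad Q_2 = (n-1)\left(\tfrac{Q_1}{n-1}u^2 + 2u'' u - (u')^2\right), \]
with the \emph{same} $f$ and $\alpha$. Now I would run Proposition \ref{prop_Ric_h} in reverse: $h_2$ is a metric satisfying \ref{eqn_Ric_h} with functions $f$, $\alpha$, and $Q_2$, so the conformally related metric $g_2 := e^{\frac{2f}{n-2}}h_2$ carries a GQE structure with potential $f$, the same $\alpha$, and some $\lambda_2$, where $\lambda_2$ is recovered by solving the affine relation \ref{Qeqn} for $\lambda$, i.e. $\lambda_2 = Q_2\, e^{-\frac{2f}{n-2}} - \tfrac{1}{n-2}\big(\Delta_{g_2} f - |\nabla f|_{g_2}^2\big)$. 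Finally, since $h_2 = u^{-2}h_1 = u^{-2}h$, we have $g_2 = u^{-2} e^{\frac{2f}{n-2}}h$, which is precisely the metric in the statement; and the identity map $(U, e^{\frac{2f}{n-2}}h) \to (U, g_2)$ is then a structure-preserving conformal diffeomorphism, non-homothetic because $u$ is non-constant.

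I do not expect a genuine obstacle: all the content lives in Propositions \ref{prop_Ric_h} and \ref{prop_confchange_existence}. The only point that warrants a sentence of care is the bookkeeping of the conformal factor — one must observe that the function $f$ appearing in the rescalings $g_i = e^{\frac{2f}{n-2}}h_i$ is the same on both ends, which holds because the $h$-level equation \ref{eqn_Ric_h} only involves $df$ and $\alpha$, both of which Proposition \ref{prop_confchange_existence} leaves unchanged — and the trivial remark that $Q_2$ can always be realized as the right-hand side of \ref{Qeqn} for some $\lambda_2$, since that relation is affine (indeed linear) in $\lambda$.
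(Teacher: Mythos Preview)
Your proposal is correct and follows essentially the same route as the paper: apply Proposition~\ref{prop_Ric_h} to pass from $g_1$ to $h_1$, invoke Proposition~\ref{prop_confchange_existence} to get the Ricci equation for $h_2=u^{-2}h_1$, and then apply Proposition~\ref{prop_Ric_h} in reverse to obtain the GQE structure on $g_2$. The only difference is that the paper goes on to compute $\lambda_2$ explicitly (using $\Delta_{h_1}f$ and the warped-product identities), whereas you simply observe that $\lambda_2$ exists because \eqref{Qeqn} is affine in $\lambda$; both are adequate for the statement as written.
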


\begin{proof} Set  $h_1 = h$, $g_1 = e^{\frac{2f}{n-2}} h_1$,  $h_2 = u^{-2} h_1$, and $g_2 =e^{\frac{2f}{n-2}} h_2$.  
From Proposition  \ref{prop_Ric_h}  we know that (\ref{eqn_Ric_h1}) holds, with
\begin{eqnarray*}
Q_1 &=& \frac{1}{n-2}\left( \Delta_{g_1} f - |\nabla f|_{g_1}^2 + (n-2)\lambda_1 \right)e^{\frac{2f}{n-2}}\\
&=& \frac{ \Delta_{h_1} f}{n-2} + \lambda_1 e^{\frac{2f}{n-2}}.
\end{eqnarray*}
Proposition \ref{prop_confchange_existence} then implies that (\ref{eqn_Ric_h2}) holds,
with
\begin{eqnarray*}
Q_2& =& (n-1) \left( u^2 \frac{Q_1}{n-1} + 2 u'' u  - (u')^2 \right) \\
&=& \frac{u^2\Delta_{h_1} f }{n-2}+  \lambda_1 e^{\frac{2f}{n-2}} u^2 +  (n-1) \left(2 u'' u  - (u')^2 \right).
\end{eqnarray*}
Direct computation now shows:
\[\lambda_2 = \left(  \lambda_1 e^{\frac{2f}{n-2}} u^2 + u h(\nabla u, \nabla f)   +  (n-1) \left(2 u'' u  - (u')^2 \right) \right)e^{\frac{-2f}{n-2}}.\]
Note that the term  $u h(\nabla u, \nabla f)$ vanishes in the $f=f(x)$ case and   equals $u u' f'$ in the $f=f(t)$ case.
\end{proof}

We can also see that $h_2$ is isometric to a warped product over a one-dimensional base with the same fiber as $h_1$:
see the proof of Theorem \ref{thm_local_classification} in section \ref{sec_conf_changes_global}.

Also note that we have, in fact, constructed a one-parameter family of conformal changes, as we can choose $u$ to be any 
anti-derivative of the warping function.  The next elementary example (which also appears in \cite{KR09})
shows that the choice  of  anti-derivative does  impact the behavior of the conformally changed metric.

\begin{example} \label{ex_sphere} Suppose that we have the standard round metric on $S^n$:
 \[ dr^2 + \sin^2(t) g_{S^{n-1}}. \]
 Then we can choose $u(t) = c - \cos(t)$.  The choice $c>1$ gives a function $u$ which is positive everywhere on the sphere, and the conformally changed metric will also be a round sphere (of possibly different curvature).  When $c\leq 1$,  $u$ will not be positive, so we do not have a global conformal change.  However, when $c=1$ we obtain stereographic projection  from the sphere minus a point to Euclidean space.  When $0<c<1$ we obtain a conformal change from a portion of the sphere to a portion of hyperbolic space,
possibly rescaled. 
 \end{example} 

We also note that with a rotationally-symmetric metric, (i.e. if  $u'$ vanishes somewhere), it is always possible to choose 
the conformal factor $u$ to be positive everywhere: since $u'(t)>0$, $u$ is bounded from below and thus can be made 
positive by adding a suitable constant.

\section{Conformal  fields }
\label{sec_conf_fields}
We prove local and global classification results for GQE manifolds admitting conformal fields.
In this section we assume:
\begin{enumerate}
 \item $(M,g,f,\alpha,\lambda)$ is a GQE manifold with $\alpha \neq \frac{1}{n-2}$, 
 \item $V$ is a vector field on $M$ such that $L_V g = 2\eta g$, with $\eta$ non-constant (i.e., $V$
is a non-homothetic conformal field), and
 \item $V$ preserves the GQE structure, in the sense that $D_V f$ equals a constant $c$, and $D_V \alpha =0$.
\end{enumerate}
First, note that $V$ is also a non-homothetic conformal field for $h= e^{-\frac{2f}{n-2}}g$:
\[ L_V h = 2 \left( \eta - \frac{c}{n-2}\right) h.\]
We define $\sigma = \eta - \frac{c}{n-2}$.

Next, we make the following observations. If $\phi_t$ is the local flow of $V$ about some point, then
$\phi_t^* g = w_t^{-2} g$ for a smooth family of functions $w_t$.  
The smooth family $u_t = w_t e^{\frac{C(t)}{n-2}}$ (where
$\phi_t^* f = f + C(t)$) satisfies $\phi_t^* h = u_t^{-2} h$, and therefore 
solves (\ref{eqn_hessian}) for each $t$ with respect to $h = e^{-\frac{2f}{n-2}}g$, by the proof of Lemma
\ref{lemma_local_form_h}.  
Differentiating in $t$, we find that $\eta$ satisfies equation (\ref{eqn_hessian})
on $M$ \footnote{An alternative approach is to apply the Lie derivative with respect to $V$ to equation
(\ref{eqn_Ric_h}), making use of formula (3.2) of \cite{KR09}: $L_V \mathrm{Ric}_h = -(n-2)\mathrm{Hess}_h \sigma - \Delta \sigma \cdot h.$} (even though the local flows of $V$ may not be globally defined).
Following the same arguments as in section \ref{sec_conf_changes}, we have:
\begin{observation}
\label{sigma_u}
The non-constant function $\sigma$ satisfies (\ref{eqn_hessian}), and the local and global classification results
(Lemmas \ref{lemma_local_form_h}, \ref{lemma_local_form_f}, \ref{lemma_ft_fx} and
Theorems \ref{thm_local_classification} and \ref{thm_compact_case}) hold in the present case,
with $u$ replaced by $\sigma$.
\end{observation}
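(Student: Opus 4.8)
The plan is to reduce the conformal-field statement to the already-established diffeomorphism classification by the standard device of ``integrating'' the infinitesimal condition. First I would record the two preliminary reductions that are already spelled out in the preamble to Observation \ref{sigma_u}: the field $V$ is also conformal for $h=e^{-\frac{2f}{n-2}}g$, with conformal factor $\sigma=\eta-\frac{c}{n-2}$, and $\sigma$ is non-constant precisely because $\eta$ is non-constant (the two differ by a constant). So it suffices to show that $\sigma$ satisfies the Obata--Brinkmann equation \eqref{eqn_hessian} with respect to $h$, after which Lemmas \ref{lemma_hessian}, \ref{lemma_local_form_h}, \ref{lemma_local_form_f}, \ref{lemma_ft_fx} and Theorems \ref{thm_local_classification}, \ref{thm_compact_case} all go through verbatim with $u$ replaced by $\sigma$, since those results only used that $u$ is a non-constant solution of \eqref{eqn_hessian} together with the GQE structure equation \eqref{eqn_Ric_h}, both of which hold here.

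To prove that $\sigma$ solves \eqref{eqn_hessian}, I would take the local flow $\phi_t$ of $V$ near a point $p$, write $\phi_t^*g=w_t^{-2}g$ for a smooth one-parameter family $w_t$ with $w_0\equiv 1$, and set $u_t=w_t e^{C(t)/(n-2)}$ where $\phi_t^*f=f+C(t)$ (the additive constant $C(t)$ exists because $D_Vf$ is constant, so $C'(t)=c$ and $C(0)=0$). Then $\phi_t^*h=u_t^{-2}h$ and $u_0\equiv1$. Because $\phi_t$ preserves the GQE structure ($D_V\alpha=0$ and $D_Vf$ constant), each $\phi_t$ is a structure-preserving conformal diffeomorphism of $(M,h)$ to itself, so by the computation in the proof of Lemma \ref{lemma_local_form_h} each $u_t$ satisfies \eqref{eqn_hessian} with respect to $h$: namely $\mathrm{Hess}_h u_t=\frac{\Delta_h u_t}{n}h$. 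Differentiating this identity in $t$ at $t=0$ and using $u_0\equiv1$, $\partial_t u_t|_{t=0}=\partial_t w_t|_{t=0}+\frac{c}{n-2}$, together with the standard fact that $\partial_t w_t|_{t=0}$ is (up to sign and the normalization already in force) exactly $\eta$ — this comes from differentiating $\phi_t^*g=w_t^{-2}g$ and comparing with $L_Vg=2\eta g$ — one gets $\mathrm{Hess}_h\,\dot u_0=\frac{\Delta_h\dot u_0}{n}h$, i.e. $\sigma$ (which differs from $\dot u_0$ by a constant) solves \eqref{eqn_hessian}. A cleaner alternative, which I would mention and is recorded in the footnote, is to apply $L_V$ directly to \eqref{eqn_Ric_h}: using $L_V\mathrm{Ric}_h=-(n-2)\mathrm{Hess}_h\sigma-(\Delta_h\sigma)h$ and $L_V(df\otimes df)=0$, $L_Vh=2\sigma h$, $D_V\alpha=0$, one obtains $-(n-2)\mathrm{Hess}_h\sigma-(\Delta_h\sigma)h=(L_VQ+2Q\sigma)h$, which is pointwise proportional to $h$; tracing gives \eqref{eqn_hessian} for $\sigma$.

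The main obstacle, and the only subtle point, is justifying that $\sigma$ solves \eqref{eqn_hessian} \emph{globally on all of $M$} even though the flow $\phi_t$ of $V$ need not be complete. This is handled by the observation that \eqref{eqn_hessian} for $\sigma$ is a local differential identity: the derivation above is valid on the domain of any local flow, and such domains cover $M$, so the identity propagates to all of $M$. The footnote's Lie-derivative argument makes this manifest, since $L_V$ of a globally-defined tensor is globally defined regardless of completeness of $V$. Once \eqref{eqn_hessian} holds for the non-constant function $\sigma$ on $(M,h)$, I would simply invoke the cited lemmas and theorems with $u\rightsquigarrow\sigma$, which is the content of Observation \ref{sigma_u}; no further work is needed.
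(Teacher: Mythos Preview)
Your proposal is correct and follows essentially the same approach as the paper: you integrate the conformal field to its local flow, apply the computation from Lemma \ref{lemma_local_form_h} to each $u_t$, differentiate in $t$ to obtain \eqref{eqn_hessian} for $\sigma$, and note (as the paper does parenthetically and in the footnote) that the identity is local so completeness of $V$ is irrelevant. Your treatment is in fact slightly more careful than the paper's, spelling out the sign and constant bookkeeping and the globality argument explicitly.
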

Consequently, $h$ is of the form (locally or globally)
\begin{equation}
\label{eqn_h_sigma}
 h = dt^2 + \sigma'(t)^2 g_N
\end{equation}
for $t \in I$. We similarly define $f=f(t)$ (resp., $f=f(x)$) to mean $\nabla f$ is parallel (resp. orthogonal) to $\nabla \sigma$.

We are therefore led to study conformal fields on a warped product over a one-dimensional base.  
We fix notation for $V$ by writing 
\[ V = v_0(t, x) \frac{\partial}{\partial t} +V_t, \] 
where $v_0$ is some function on $I \times N$, and $V_t$ is the projection of $V$ onto the factor $\{t\} \times N$.  
Some general facts regarding this case are collected in the statement below, which follows immediately from
Proposition \ref{prop_conf_fields_app} in appendix \ref{app_conf_fields}.
\begin{proposition}
\label{prop_conf_fields}
A vector field $V$ satisfies
\[ L_V h = 2 \sigma h,\]
with $h$ given by (\ref{eqn_h_sigma})
 if and only if 
\begin{enumerate}
\item $V_t$ is a conformal field for $g_N$ for each $t$ with $L_{V_t} g_N = 2 \omega_t g_N$,
\item  $ \frac{\partial}{\partial t}\left(\frac{v_0}{\sigma'}\right)  = \frac{\omega_t}{\sigma'}$, and 
\item $\frac{\partial V_t}{\partial t} = -\frac{1}{(\sigma')^2}\nabla^N v_0.$
\end{enumerate}
Moreover $\sigma= \frac{v_0\sigma''}{\sigma'}   + \omega_t = \frac{\partial v_0}{\partial t}.$
\end{proposition}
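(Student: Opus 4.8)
The plan is to prove Proposition~\ref{prop_conf_fields} by directly computing the Lie derivative $L_V h$ for $h = dt^2 + \sigma'(t)^2 g_N$ and comparing the result component-by-component against $2\sigma h$. Write $V = v_0 \frac{\partial}{\partial t} + V_t$, decomposing tangent vectors on $I \times N$ into the $dt$-direction and the $N$-directions. I would organize the verification by testing $L_V h$ on the three types of vector-field pairs: $(\frac{\partial}{\partial t}, \frac{\partial}{\partial t})$, $(\frac{\partial}{\partial t}, X)$ for $X$ tangent to $N$, and $(X, Y)$ for $X, Y$ tangent to $N$. Since the statement says the calculation follows immediately from Proposition~\ref{prop_conf_fields_app} in the appendix, the honest framing is that this is a specialization: I would first state that the appendix proposition handles the general warped product $dt^2 + \psi(t)^2 g_N$ and that we merely substitute $\psi = \sigma'$.

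The key computational steps are as follows. Using the standard formula $(L_V h)(Y,Z) = V(h(Y,Z)) - h([V,Y],Z) - h(Y,[V,Z])$, together with the product structure (the coordinate fields $\frac{\partial}{\partial t}$ and the lifts of coordinate fields on $N$ all commute in the obvious way, and $h(\frac{\partial}{\partial t}, X) = 0$ for $X$ tangent to $N$):

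First, the $(\frac{\partial}{\partial t}, \frac{\partial}{\partial t})$ component gives $(L_V h)(\frac{\partial}{\partial t}, \frac{\partial}{\partial t}) = 2\frac{\partial v_0}{\partial t}$, so matching with $2\sigma h(\frac{\partial}{\partial t}, \frac{\partial}{\partial t}) = 2\sigma$ yields $\sigma = \frac{\partial v_0}{\partial t}$. Second, the mixed component $(L_V h)(\frac{\partial}{\partial t}, X)$ for $X$ tangent to $N$ produces a term from $V_t$ (namely $h(\frac{\partial}{\partial t}, [v_0 \frac{\partial}{\partial t} + V_t, X])$, whose only surviving piece is $\sigma'^2$ times the $N$-inner-product of $\frac{\partial V_t}{\partial t}$ with $X$ — wait, more carefully one gets the $\frac{\partial}{\partial t}$-derivative of $V_t$ paired via $\sigma'^2 g_N$) plus a term $\frac{\partial}{\partial t}$ applied along $N$ to $v_0$, i.e. $g_N(\nabla^N v_0, X)$; setting this to zero (since $h(\frac{\partial}{\partial t}, X) = 0$) gives item (3): $\frac{\partial V_t}{\partial t} = -\frac{1}{(\sigma')^2}\nabla^N v_0$. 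Third, the $N$-tangent component: $(L_V h)(X,Y) = v_0 \frac{\partial}{\partial t}(\sigma'^2) g_N(X,Y) + \sigma'^2 (L_{V_t} g_N)(X,Y)$, and matching with $2\sigma \sigma'^2 g_N(X,Y)$ forces $L_{V_t} g_N = 2\omega_t g_N$ for $\omega_t := \sigma - v_0 \sigma''/\sigma'$, which is item (1) together with the final displayed identity $\sigma = \frac{v_0 \sigma''}{\sigma'} + \omega_t$. Finally, item (2) is obtained by cross-differentiating: take $\frac{\partial}{\partial t}$ of $\sigma = \frac{v_0\sigma''}{\sigma'} + \omega_t$, or equivalently combine the relations already derived; one computes $\frac{\partial}{\partial t}(v_0/\sigma') = (\frac{\partial v_0}{\partial t} \sigma' - v_0 \sigma'')/(\sigma')^2 = (\sigma \sigma' - v_0 \sigma'')/(\sigma')^2 = \omega_t/\sigma'$.

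I do not expect any single step to be a serious obstacle — the whole argument is bookkeeping with the Koszul-type formula for $L_V h$ on a warped product — but the step most prone to sign or factor errors is the mixed $(\frac{\partial}{\partial t}, X)$ computation, where one must correctly track how $[V, X]$ splits into base and fiber parts and how the fiber part gets weighted by $\sigma'^2$; this is where item (3) comes from and it is the least routine of the four. Since the paper explicitly defers the full verification to Proposition~\ref{prop_conf_fields_app} in the appendix, I would keep the main-text proof to one or two lines: observe that substituting the warping function $\psi(t) = \sigma'(t)$ into the general statement proved in the appendix yields exactly items (1)--(3) and the formula for $\sigma$, and refer the reader there for the coordinate computation sketched above.
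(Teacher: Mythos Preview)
Your proposal is correct and follows essentially the same approach as the paper. The paper's proof of this proposition consists of the single sentence that it follows immediately from Proposition~\ref{prop_conf_fields_app} with $u = \sigma'$; the appendix proof computes $L_V h$ on the three types of vector-field pairs exactly as you outline, obtaining $\sigma = \partial v_0/\partial t$ from the $(\partial_t,\partial_t)$ component, item~(3) from the mixed component, and item~(1) together with the definition $\omega_t = \sigma - v_0 u'/u$ from the $(X,Y)$ component, with item~(2) then following algebraically from these---just as in your derivation.
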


We consider separately the cases in which $f=f(t)$ and $f=f(x)$, with the goal of classifying the structures of $g$ 
and $V$, both locally and globally.

\subsection{$f=f(t)$ case}
\label{sec_conf_fields_f(t)}
The first observation is that $f$ is constant when  $c=0$. 

\begin{proposition}
\label{lemma_c=0}
If $f=f(t)$ and $D_V f = 0$, then $f$ is constant.
\end{proposition}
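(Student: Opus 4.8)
The plan is to work in the warped-product coordinates $h = dt^2 + \sigma'(t)^2 g_N$ guaranteed by Observation~\ref{sigma_u}, and to exploit the fact that in the $f=f(t)$ case both $\nabla f$ and $\nabla\sigma$ point in the $\partial/\partial t$ direction. The key structural input is Proposition~\ref{prop_conf_fields}, which tells us $\sigma = \partial v_0/\partial t$, and the hypothesis $D_V f = 0$, which I would rewrite in coordinates. Since $f = f(t)$, we have $D_V f = v_0\, f'(t)$, so the hypothesis says $v_0\, f'(t) \equiv 0$.

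First I would argue that $v_0$ cannot vanish identically. Indeed, from Proposition~\ref{prop_conf_fields} we have $\sigma = \partial v_0/\partial t$; if $v_0 \equiv 0$ then $\sigma \equiv 0$, contradicting that $\sigma$ is non-constant (the conformal field is non-homothetic). So $\{v_0 \neq 0\}$ is a nonempty open set $W$. On $W$ the equation $v_0 f'(t) = 0$ forces $f'(t) = 0$, hence $f$ is locally constant on $W$ — and since $f = f(t)$ depends only on $t$, $f$ is constant on the whole $t$-interval corresponding to $W$, i.e. on an open "slab" $\{t \in J\} $ for some open $J \subset I$. The remaining task is to propagate $f' \equiv 0$ from this slab to all of $I$. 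The natural approach here is to consider the open set $\{f' \neq 0\}$ in the $t$-variable; on its closure the argument above shows $v_0$ must vanish, and then I would use the relation $\sigma = \partial v_0/\partial t$ together with part (2) of Proposition~\ref{prop_conf_fields} (which controls $\partial_t(v_0/\sigma')$ in terms of $\omega_t$) to derive a contradiction with $\sigma$ being non-constant, unless $\{f' \neq 0\}$ is empty.

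The main obstacle I anticipate is this last propagation step: showing that $f' \neq 0$ somewhere is genuinely incompatible with the conformal-field equations, rather than just locally so. The cleanest route is probably a connectedness/continuity argument: let $A = \{t \in I : f'(t) = 0\}$, which is closed; we have shown $A$ contains a nonempty open slab, so it suffices to show $A$ is open. Near a point $t_0$ with $f'(t_0) = 0$ but $f' \not\equiv 0$ nearby, pick $t_i \to t_0$ with $f'(t_i) \neq 0$; then $v_0(t_i, x) = 0$ for all $x$, so by continuity $v_0(t_0, x) = 0$. Now $v_0$ vanishes on a sequence of slices accumulating at $t_0$, which should force $\partial v_0/\partial t(t_0, \cdot) = 0$, i.e. $\sigma(t_0) = 0$; but combined with the structure of solutions of (\ref{eqn_hessian}) — $\sigma$ itself satisfies the Hessian equation and its zero set is where $\sigma$ has a critical point, which is isolated by Lemma~\ref{lemma_hessian} — one gets that $\sigma$ vanishes on a neighborhood of $t_0$, contradicting non-constancy. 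Alternatively, one can differentiate $v_0 f' = 0$ directly and combine with $\sigma = \partial_t v_0$ to see that wherever $f' = 0$ on a set with nonempty interior, a short ODE argument shows $v_0$ and $f'$ cannot simultaneously "turn on." I would present whichever of these is shortest; in either case the heart of the matter is that the single scalar constraint $v_0 f'(t) = 0$, together with $\sigma = \partial_t v_0$ non-constant, leaves no room for $f'$ to be nonzero anywhere.
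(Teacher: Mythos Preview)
Your approach uses the same two key ingredients as the paper --- the identity $D_Vf = v_0\,f'(t)$ and the relation $\sigma = \partial v_0/\partial t$ from Proposition~\ref{prop_conf_fields} --- but you have inverted the logic in a way that makes the argument much longer than necessary. The paper proves the contrapositive in three lines: if $f$ is non-constant, pick an open $t$-interval $I$ on which $f'(t)\neq 0$; then $v_0f'=0$ forces $v_0\equiv 0$ on $I\times N$, whence $\sigma=\partial_t v_0\equiv 0$ on $I$, contradicting the fact that the zeros of $\sigma$ are isolated (equivalently, that $\sigma'\neq 0$ on this coordinate chart). No propagation or connectedness argument is needed.

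By starting instead from the set $\{v_0\neq 0\}$ and trying to propagate $f'=0$, you have created extra work, and your sketch of that propagation contains a genuine soft spot: the step ``$v_0$ vanishes on a sequence of slices accumulating at $t_0$, which should force $\partial v_0/\partial t(t_0,\cdot)=0$'' is not justified --- vanishing on an accumulating sequence does not by itself force the derivative to vanish at the limit point, and the subsequent claim that $\sigma$ then vanishes on a \emph{neighborhood} of $t_0$ does not follow either. Your route can be salvaged (for instance: the complement of the $t$-projection of $\{v_0\neq 0\}$ is where $v_0(\cdot,x)\equiv 0$ for all $x$; if this complement had nonempty interior then $\sigma$ would vanish on an interval, contradiction; hence the projection is dense and $f'\equiv 0$ by continuity), but it is simpler to adopt the paper's direct contrapositive.
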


\begin{proof}
Suppose $I$ is an open interval on which $f'(t) \neq 0$.   The condition $D_V f = 0$ is equivalent 
to $v_0(t,x) f'(t) =0$, so $v_0=0$ on $I$. From Proposition \ref{prop_conf_fields}, $\sigma = \frac{\partial v_0}{\partial t}=0$ 
on $I$.  This contradicts the fact that the zeros of $\sigma$ are isolated.  
\end{proof}
The following corollary is a  special case. 

\begin{corollary}
\label{lemma_sigma_no_crit}
Suppose $\sigma$ has a critical point at $p \in M$.  Then $f$ is constant on any polar coordinate neighborhood of
$p$.
\end{corollary}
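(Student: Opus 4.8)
The plan is to derive Corollary~\ref{lemma_sigma_no_crit} directly from Proposition~\ref{lemma_c=0} together with the local structure theory of Lemma~\ref{lemma_local_form_f} applied to $\sigma$ (via Observation~\ref{sigma_u}). First I would observe that, since $p$ is a critical point of $\sigma$ and $\alpha(p)\neq\frac{1}{n-2}$, case (1) of Lemma~\ref{lemma_local_form_f} applies to $\sigma$: on a polar coordinate neighborhood $U$ of $p$ the metric $h$ is a type (II) warped product $h = dt^2 + \sigma'(t)^2 g_{S^{n-1}}$ and, crucially, $f=f(t)$ on $U$. So the hypothesis $f=f(t)$ is automatic near a critical point of $\sigma$; this is exactly the content of Case~A of the proof of Lemma~\ref{lemma_local_form_f}, where the $(n-1)$-dimensional tangent space to the sphere fiber is forced into an eigenspace of $\mathrm{Ric}_h$, so $\nabla f$ must be parallel to $\partial/\partial t$.

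Next I would invoke the constraint that $V$ preserves the GQE structure, which in the conformal-field setting means $D_V f = c$ is constant. I want to conclude $c=0$ on $U$ so that Proposition~\ref{lemma_c=0} applies. The way to see this is to use the behavior of $V$ at the critical point $p$ of $\sigma$. Writing $V = v_0(t,x)\frac{\partial}{\partial t} + V_t$ as in Proposition~\ref{prop_conf_fields}, the identity $D_V f = c$ becomes $v_0(t,x) f'(t) = c$ (since $f=f(t)$ on $U$, the $V_t$ part contributes nothing). In polar coordinates centered at $p$ the warping function $\sigma'(t)$ vanishes at $t=0$, and smoothness of $V$ at the center forces $v_0$ to vanish at $p$ as well (a vector field on $B_R(0)$ with a well-defined value at the origin cannot have a nonzero radial component there, since the radial direction is undefined at the center). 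Evaluating $v_0 f' = c$ at $t=0$ then gives $c=0$.

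With $c = D_V f = 0$ established on the polar neighborhood, Proposition~\ref{lemma_c=0} applies directly on $U$ (whose hypotheses are precisely $f=f(t)$ and $D_V f = 0$), and yields that $f$ is constant on $U$. That is exactly the assertion of the corollary. I expect the main obstacle to be the justification that $c=0$, i.e., making rigorous the claim that the radial component $v_0$ of a smooth vector field must vanish at the center of a polar coordinate chart; one should phrase this carefully, perhaps by noting that $\sigma$ itself satisfies \eqref{eqn_hessian} so $\nabla\sigma$ vanishes at $p$, and then using that $\sigma = \partial v_0/\partial t$ from Proposition~\ref{prop_conf_fields} together with $v_0(p)=0$ — alternatively, one can argue that if $c\neq 0$ then $f'(t) = c/v_0(t,x)$ would blow up as $t\to 0$, contradicting that $f$ extends smoothly across $p$ (as guaranteed by case (1) of Lemma~\ref{lemma_local_form_f}), giving a clean contradiction-based proof that avoids delicate smoothness bookkeeping.
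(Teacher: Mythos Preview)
Your proposal is correct and follows essentially the same route as the paper: establish $f=f(t)$ on the polar neighborhood via case~(1) of Lemma~\ref{lemma_local_form_f} (through Observation~\ref{sigma_u}), then show $c=D_Vf=0$, then invoke Proposition~\ref{lemma_c=0}. The paper's argument for $c=0$ is slightly more direct than yours: since $f=f(t)$ is a smooth radial function at $p$, smoothness forces $df(p)=0$, whence $c=D_Vf(p)=0$ immediately---this sidesteps any need to analyze the behavior of $v_0$ at the singular point of the polar chart.
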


\begin{proof}
If $d\sigma(p)=0$, then $h$ admits polar coordinates about $p$ and $f=f(t)$.  
By smoothness, $df(p)=0$.  Since $D_V f$ is constant, it is identically zero. 
\end{proof}

Now we may prove Theorem \ref{thm_compact_case_conformal_fields} from the introduction, restated
below for the reader's convenience.
\begin{theorem}
\label{thm_compact_case_conformal_fields2}
Suppose $(M,g,f,\alpha,\lambda)$ is a complete GQE manifold,
with $\alpha \neq \frac{1}{n-2}$, that admits a structure-preserving non-homothetic 
conformal field: $L_V g = 2\eta g$.  If $\eta$ has a critical point (e.g., if $M$ is compact),
then $f$ is constant and $(M,g)$ is isometric to a simply-connected space form.
\end{theorem}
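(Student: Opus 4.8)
The plan is to reduce everything to the case where $f$ is constant, and then to invoke the classical structure theory for complete Einstein manifolds carrying a gradient conformal field. First I would record that, by Observation \ref{sigma_u}, the function $\sigma = \eta - \frac{c}{n-2}$ is a non-constant solution of (\ref{eqn_hessian}) with respect to $h = e^{-\frac{2f}{n-2}}g$, and that it has the same critical points as $\eta$, hence a critical point $p$. Since $\alpha(p)\neq\frac{1}{n-2}$, Lemma \ref{lemma_local_form_f}(1) (with $u$ replaced by $\sigma$, as permitted by Observation \ref{sigma_u}) shows that near $p$ the metric $h$ is a type (II) warped product $dt^2 + \sigma'(t)^2 g_{S^{n-1}}$ with $f=f(t)$. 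In particular the $f=f(x)$ case cannot occur anywhere (Lemma \ref{lemma_ft_fx}, since $\sigma$ has a critical point), so we are in the $f=f(t)$ regime. Smoothness of $f$ at the pole $p$ forces $df(p)=0$; since $D_V f\equiv c$ is constant, this gives $c=0$ (this is exactly Corollary \ref{lemma_sigma_no_crit}), and then Proposition \ref{lemma_c=0} yields that $f$ is constant on $M$.

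With $f$ constant, the GQE equation (\ref{eqn_GQE}) collapses to $\mathrm{Ric}_g = \lambda g$, and Schur's lemma (valid since $n\geq 3$) upgrades $\lambda$ to a constant, so $(M,g)$ is Einstein. Moreover $h$ is now a constant multiple of $g$, so $\sigma$ is a non-constant solution of the Obata-type equation $\mathrm{Hess}_g\,\sigma = \frac{\Delta_g\sigma}{n}g$ on the \emph{complete} manifold $(M,g)$, and it has a critical point. By Tashiro's theorem — equivalently, Lemma \ref{lemma_f(t)_global} with $f$ trivial — $(M,g)$ is globally a warped product over a one-dimensional base of type (II) or (III). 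Since $g$ is complete, in type (II) the ball has infinite radius; hence $M$ is diffeomorphic to $\R^n$ or to $S^n$, in particular simply connected, and $g = dt^2 + v(t)^2 g_{S^{n-1}}$ with $v=\sigma'$ positive off the pole(s) and $v(0)=0$ (and $v(R)=0$ in type (III)).

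Finally I would impose the Einstein condition on this warped product. From the Ricci formula (\ref{eqn_Ric_h_WP}) with round fiber $g_{S^{n-1}}$, equating the two eigenvalues of $\mathrm{Ric}_g$ forces $v''=-\frac{\lambda}{n-1}v$ together with a first-order relation of the form $(v')^2 = 1 - \frac{\lambda}{n-1}v^2$ (after the standard normalization of $g_{S^{n-1}}$). The only solutions extending smoothly across the pole(s) of a complete type (II) or (III) metric are, up to scaling, $v(t)=t$, $v(t)=\frac{1}{\sqrt k}\sin(\sqrt k\,t)$, and $v(t)=\frac{1}{\sqrt k}\sinh(\sqrt k\,t)$ with $k>0$, which give precisely the round metrics on $\R^n$, $S^n$, and $\mathbb{H}^n$ — all simply-connected space forms. (Alternatively, at this stage one may simply cite Kanai's classification \cite{Kanai} of complete Einstein manifolds admitting a non-homothetic conformal field; cf.\ remark \ref{rmk_kanai}.) This proves the theorem.

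I expect the only genuinely delicate point to be this last step: pinning down constant curvature from the Einstein condition on the warped product requires the boundary analysis of $v$ at the pole(s), and one must be a little careful that the ODE admits no exotic complete rotationally-symmetric Einstein solutions. Everything preceding it is an assembly of the earlier lemmas (Observation \ref{sigma_u}, Lemmas \ref{lemma_local_form_f}, \ref{lemma_ft_fx}, \ref{lemma_f(t)_global}, Corollary \ref{lemma_sigma_no_crit}, Proposition \ref{lemma_c=0}) once one notices that $\sigma$ plays the role of $u$.
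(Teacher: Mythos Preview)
Your proof is correct and follows essentially the same route as the paper: use the critical point of $\sigma$ to force $f=f(t)$ near $p$, deduce $c=0$ and hence $f$ constant, then conclude via the classification of complete rotationally-symmetric Einstein metrics. The only minor difference is in how you globalize ``$f$ constant'': the paper argues the polar neighborhood $U$ covers $M$ (up to a point), whereas you invoke Lemma \ref{lemma_ft_fx} to get $f=f(t)$ everywhere and then apply Proposition \ref{lemma_c=0}; your final ODE analysis just spells out what the paper records as ``well-known''.
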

\begin{proof}
$(M,h)$ admits a polar coordinate neighborhood $U$ about a critical point $p$ of $\sigma$ with $f=f(t)$ on $U$, so that $g$
is rotationally-symmetric with pole $p$.  By Corollary \ref{lemma_sigma_no_crit}, $f$ is constant on $U$.
In the compact case, $U$ covers $M$ except for a point, so $f$ is constant; in the non-compact case, $U=M$,
and $f$ is constant.  Thus $g$ is a Einstein.  Complete, rotationally-symmetric Einstein manifolds are well-known
to be the simply-connected space forms.
\end{proof}

Thus, we restrict to the case in which $M$ is non-compact and $\sigma$ has no critical points; from the previous
results, we may also assume $c\neq 0$, so that $f'$ never vanishes.  We assume $h$ is of the form (\ref{eqn_h_sigma}) on $U = I \times N$, and
where $\sigma'>0$ on $I=(a,b)$.  We  have that  $v_0 = \frac{c}{f'(t)}$ and in particular,  $v_0$ is a function of 
only $t$ and never vanishes.  Corollaries \ref{cor_v_0(t)} and \ref{cor_fields_integrated_eqns} 
imply that $V_t$ is independent of $t$, $\omega:=\omega_t$ is constant, and $v_0$ and $\sigma$ solve:
\begin{eqnarray*}
\sigma &=& \sigma''\left(A + \omega \int \frac{dt}{\sigma'}\right) + \omega\\
v_0 &=& \sigma'\left( A + \omega \int \frac{dt}{\sigma'}\right).
\end{eqnarray*}
for some constant $A$.  Defining $r(t) = \int \frac{dt}{\sigma '}$, an increasing function of $t$, these equations become 
\begin{eqnarray} \label{eqn_sigma_r}  \sigma &=& \sigma''(A+\omega r) + \omega\\
\label{eqn_v_r} v_0 &=& \sigma'(A + \omega r).\end{eqnarray} 

Note that we have not yet used the GQE structure; doing so yields the following. 
\begin{lemma} 
\label{Lemma_ConfField_Local}  
Suppose $(U,h)$, $V$ and $\sigma$ are as above.
If $(U,h,f,\alpha, \lambda)$ is a GQE manifold, $V$  preserves the GQE structure, and 
$f=f(t)$, then 
\begin{enumerate}
\item $\sigma$ is a solution to  (\ref{eqn_sigma_r}) for some constants $A$ and $\omega$, 
\item $V = v_0(t) \frac{\partial}{\partial t}+ V_0$ where $v_0$ is given in terms of $\sigma$ by (\ref{eqn_v_r}) and is non-zero on $(a,b)$,  and $V_0$ is a fixed homothetic field for $g_N$ with expansion factor $\omega$, 
\item $f(t) = \int \frac{c}{v_0(t)} dt$,  and 
\item $\alpha=K_1 +  K_2 \mu(x) $ where $K_i$ is are explicit constants determined by $A$, $\omega$, $\sigma$, $c$, and $n$
(see (\ref{eqn_alpha})),  and $\mathrm{Ric}_{g_N} = \mu(x) g_N$.   
\end{enumerate}
Conversely, if $A, \omega, \sigma, V, f, \alpha, g_N, K_1, K_2$ and $c$ satisfy (1)--(4), then
$(U,h,f,\alpha, \lambda)$ is a GQE manifold with structure-preserving conformal field $V$.
\end{lemma}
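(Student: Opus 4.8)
The plan is to obtain items (1)--(3) directly from the discussion preceding the statement, to derive item (4) by substituting the warped-product form of $h$ into the GQE equation and simplifying via a first integral of (\ref{eqn_sigma_r}), and to prove the converse by running these computations backwards. Items (1) and (2) are exactly what Corollaries \ref{cor_v_0(t)} and \ref{cor_fields_integrated_eqns} give in the case at hand: $V_t\equiv V_0$ is $t$-independent, $\omega:=\omega_t$ is a constant (so $V_0$ is a homothetic field for $g_N$ with factor $\omega$), $\sigma$ solves (\ref{eqn_sigma_r}), $v_0$ is given by (\ref{eqn_v_r}), and $v_0=c/f'$ is nowhere zero on $(a,b)$ because $c\ne0$. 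Item (3) is immediate: $f=f(t)$ gives $D_Vf=v_0f'$, so $v_0f'=c$ and $f=\int(c/v_0)\,dt$.

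For item (4) I would write $h=dt^2+v(t)^2 g_N$ with $v=\sigma'$ and split the GQE equation (\ref{eqn_Ric_h}) into its $\partial/\partial t$ and $N$-tangential components using the warped-product Ricci formula (\ref{eqn_Ric_h_WP}). Since $f=f(t)$, $df\otimes df$ has no $N$-component, so the $N$-part forces $\mathrm{Ric}_{g_N}=(Qv^2+vv''+(n-2)(v')^2)\,g_N$; the left side being $t$-independent, this says $g_N$ is Einstein, $\mathrm{Ric}_{g_N}=\mu(x)g_N$ (with $\mu$ constant when $n\ge4$, by Schur), and pins down $Q=(\mu(x)-vv''-(n-2)(v')^2)/v^2$. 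The $\partial/\partial t$-component then reproduces (\ref{eqn_f_prime_Ric}): $\bigl(\tfrac1{n-2}-\alpha\bigr)(f')^2=\bigl((n-2)((v')^2-vv'')-\mu\bigr)/v^2$. Substituting $f'=c/v_0$ and $v_0=v\,(A+\omega r)$ and abbreviating $B=A+\omega r$ gives $\tfrac1{n-2}-\alpha=\tfrac{B^2}{c^2}\bigl((n-2)((v')^2-vv'')-\mu\bigr)$.

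The crux is the identity $(v')^2-vv''=E/B^2$ for a constant $E$. To see it, rewrite (\ref{eqn_sigma_r}) as $\sigma-\omega=\sigma''B$, multiply by $2\sigma'$, and use $B'=\omega/\sigma'$ to recognize both sides as exact derivatives, integrating to the first integral $(\sigma')^2B=\sigma^2-\omega\sigma-E$. Differentiating this (cleanest in the variable $r$, under which $d\sigma/dr=(\sigma')^2=v^2$) and re-substituting the first integral collapses $(v')^2-vv''$ to $E/B^2$. Hence $\tfrac1{n-2}-\alpha=\tfrac{(n-2)E}{c^2}-\tfrac{B^2\mu}{c^2}$, which is the asserted formula (\ref{eqn_alpha}) with $K_1=\tfrac1{n-2}-\tfrac{(n-2)E}{c^2}$; and the coefficient of $\mu$ is a genuine constant because either $\omega=0$, so $B\equiv A$, or else $\omega\ne0$, in which case $L_{V_0}\mathrm{Ric}_{g_N}=0$ together with $\mathrm{Ric}_{g_N}=\mu g_N$ and $L_{V_0}g_N=2\omega g_N$ forces $\mu\equiv0$ (certainly so when $n\ge4$), whence $\alpha\equiv K_1$. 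Verifying this first integral and reconciling the apparent $t$-dependence is, I expect, the main obstacle.

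For the converse, I would reverse each step. Given data obeying (1)--(4), set $h=dt^2+(\sigma')^2 g_N$, define $Q$ by the displayed formula, and define $\lambda$ by inverting (\ref{Qeqn}); both components of (\ref{eqn_Ric_h}) then hold --- the $N$-part by construction of $Q$ and the Einstein condition, the $\partial/\partial t$-part because the first integral again yields $(v')^2-vv''=E/B^2$ and (4) supplies $\tfrac1{n-2}-\alpha$ in the matching form --- so by Proposition \ref{prop_Ric_h}, $g=e^{2f/(n-2)}h$ is GQE. Next Proposition \ref{prop_conf_fields} applies: its condition (1) is our (2); its condition (2) is $\partial_t(v_0/\sigma')=B'=\omega/\sigma'$; its condition (3) is trivial since $V_0$ and $v_0$ are $t$-independent along $N$; and the conformal factor equals $\partial_t v_0=\sigma''B+\omega=\sigma$ by (\ref{eqn_sigma_r}). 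Thus $L_Vh=2\sigma h$; since $D_Vf=v_0f'=c$ and $D_V\alpha=0$ (using $V_0\mu=-2\omega\mu$, which vanishes in both branches), $V$ preserves the GQE structure, and $L_Vg=L_V(e^{2f/(n-2)}h)=2\bigl(\sigma+\tfrac c{n-2}\bigr)g$ displays $V$ as a non-homothetic conformal field on $(M,g)$.
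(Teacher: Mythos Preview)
Your argument is correct and follows essentially the same route as the paper: items (1)--(3) come from the preceding discussion and the appendix corollaries, and item (4) is obtained by comparing the warped-product Ricci formula with (\ref{eqn_Ric_h}), then invoking the first integral of (\ref{eqn_sigma_r}) --- your constant $E$ is exactly $-K$ in the paper's notation, and your identity $(v')^2-vv''=E/B^2$ is the same computation the paper carries out with $\sigma'''/\sigma'-(\sigma'')^2/(\sigma')^2$.

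One small point: in the $\omega\neq0$ branch you conclude $\mu\equiv0$ from $L_{V_0}\mathrm{Ric}_{g_N}=0$, which yields $D_{V_0}\mu+2\omega\mu=0$; combined with Schur's lemma this forces $\mu=0$ when $n\ge4$, but for $n=3$ (so $g_N$ is a surface and $\mu$ need not be constant) this ODE alone does not give $\mu\equiv0$. The paper closes this gap by citing the classical fact (Kobayashi--Nomizu) that a Riemannian manifold admitting a non-isometric homothetic field is flat, whence $\mu\equiv0$ regardless of dimension. Your converse is more detailed than the paper's, which simply asserts it.
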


\begin{remark} In particular,  $\alpha$ is constant if $n>3$.  The proof will also show that $\alpha$ is constant if $n=3$ and $\omega \neq 0$.\end{remark} 

We have already established (1)--(3) above.  Before proving (4),  we note the following fundamental fact about solutions to  (\ref{eqn_sigma_r}). 

\begin{proposition} A function $\sigma$ solves (\ref{eqn_sigma_r}) if and only if the quantity
\begin{equation}
\label{eqn_K}
K = (A+\omega r)(\sigma')^2 - \sigma(\sigma - \omega) 
\end{equation}
is constant.  
\end{proposition}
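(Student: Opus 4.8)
The plan is to verify the ``only if'' direction by a direct computation and then observe the converse is immediate. First I would recall that $r(t) = \int \frac{dt}{\sigma'}$, so $r' = \frac{1}{\sigma'}$, and that $\sigma$ solving \eqref{eqn_sigma_r} means $\sigma - \omega = \sigma''(A + \omega r)$. Starting from the proposed conserved quantity $K = (A+\omega r)(\sigma')^2 - \sigma(\sigma-\omega)$, I would differentiate with respect to $t$ using the product rule:
\begin{equation*}
K' = \omega r' (\sigma')^2 + (A+\omega r)\cdot 2\sigma'\sigma'' - \sigma'(\sigma - \omega) - \sigma(\sigma' - \omega\cdot 0),
\end{equation*}
being careful that $\omega$, $A$ are constants. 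Using $r' = 1/\sigma'$ the first term is $\omega\sigma'$, and grouping the terms containing $(A+\omega r)\sigma''$ against $\sigma(\sigma-\omega)$-type terms should collapse everything to a multiple of $\sigma'$ times $\bigl[(A+\omega r)\sigma'' - (\sigma - \omega)\bigr]$ plus a leftover. The point is that $K' = 0$ identically is equivalent (after dividing by $\sigma'$, which is nonzero on the interval in question) to exactly the ODE \eqref{eqn_sigma_r}, giving the claimed equivalence in one stroke.

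More precisely, I expect the computation to yield $K' = \sigma'\bigl[ \omega + 2(A+\omega r)\sigma'' - 2\sigma + \omega \bigr]$ or a similar expression; the coefficient of $(A+\omega r)$ inside must be $2\sigma''$, and the remaining terms must reduce to $-2(\sigma - \omega)$, so that $K' = 2\sigma'\bigl[(A+\omega r)\sigma'' - (\sigma - \omega)\bigr]$. Since $\sigma' \neq 0$ wherever the warped-product coordinates are valid (this is built into the setup of \eqref{eqn_h_sigma} and the hypothesis that $\sigma$ has no critical points in the relevant region), $K' \equiv 0$ if and only if $(A+\omega r)\sigma'' = \sigma - \omega$, which is precisely \eqref{eqn_sigma_r}. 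Conversely, if $\sigma$ satisfies \eqref{eqn_sigma_r} then the bracket vanishes pointwise, so $K' \equiv 0$ and $K$ is constant.

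The only genuine subtlety — and the step most likely to need care — is the algebraic bookkeeping in the differentiation: one must track that the $\omega r'(\sigma')^2$ term produces exactly the extra $\omega\sigma'$ needed to combine with $-\sigma\sigma'$ and $-\sigma'(\sigma-\omega)$ into the clean $-2\sigma'(\sigma-\omega)$, and that no stray term survives. A secondary point worth a sentence is the domain caveat: the equivalence is genuinely pointwise on any interval where $\sigma' \neq 0$, which matches the context in which the proposition is invoked. Everything else is routine, so I would keep the written proof to the short differentiation followed by dividing out $\sigma'$.
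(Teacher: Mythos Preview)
Your proposal is correct and follows essentially the same approach as the paper: differentiate $K$ with respect to $t$, use $r' = 1/\sigma'$, and observe that $K' = 2\sigma'\bigl[(A+\omega r)\sigma'' + \omega - \sigma\bigr]$, from which the equivalence follows since $\sigma' \neq 0$. Your anticipated algebra is right on the mark, and the paper's proof is in fact just the two-line version of exactly this computation.
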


\begin{remark} When $\omega = 0$ this is the well-known fact that $A (\sigma')^2 - \sigma^2$ is constant for 
solutions to $\sigma'' = \frac{\sigma}{A}$. \end{remark}

\begin{proof} 
Differentiate with respect to $t$ and use $\frac{dr}{dt} = \frac{1}{\sigma'}$:
\begin{eqnarray*} 
\frac{d}{dt} \left( (A+\omega r)(\sigma')^2 - \sigma(\sigma - \omega) \right) &=& \omega \frac{dr}{dt} (\sigma')^2 + 2(A+\omega r)
       \sigma' \sigma'' - 2 \sigma \sigma' + \omega \sigma' \\
&=& 2 \sigma' \left( (A+\omega r) \sigma'' + \omega - \sigma \right).
\end{eqnarray*}
\end{proof}

\begin{proof}[Proof of Lemma \ref{Lemma_ConfField_Local}] 
In order to have a GQE structure,  the Ricci curvature of $h$ must be given both from the warped product formula
(\ref{eqn_Ric_h_WP}) and from (\ref{eqn_Ric_h}), leading to:
\begin{equation}
\label{eqn_sigma_f}
 \left(\frac{1}{n-2}-\alpha\right)f'(t)^2= -(n-2)\left( \frac{\sigma'''(t)}{\sigma'(t)} - \frac{\sigma''(t)^2}{\sigma'(t)^2}\right)  - \frac{\mu(x) }{\sigma'(t)^2}  ,
\end{equation}
We rewrite (\ref{eqn_sigma_r}) as 
\[ \sigma'' = \frac{\sigma- \omega}{A+ \omega r}. \]
Differentiating this equation with respect to $t$ yields:
\[ \frac{\sigma'''}{\sigma'} = \frac{1}{A+\omega r} - \frac{\omega \sigma''}{(\sigma')^2(A+\omega r)} \]
Substituting into formula (\ref{eqn_sigma_f}) gives
\begin{eqnarray*}
\left(\frac{1}{n-2}-\alpha\right)f'(t)^2 &=& -(n-2) \left( \frac{\sigma'''}{\sigma'} - \frac{(\sigma'')^2}{(\sigma')^2} \right) - \frac{\mu(x) }{\sigma'(t)^2} \\
&=&  -(n-2) \left( \frac{(\sigma')^2(A + \omega r) - \sigma(\sigma - \omega) }{(\sigma')^2(A+ \omega r)^2}\right)- \frac{\mu(x) }{\sigma'(t)^2}. 
\end{eqnarray*}
On the other hand, 
\[ f'(t) = \frac{c}{v_0 }= \frac{c}{ \sigma'(A+\omega r)}, \]
and  $(A+\omega r)(\sigma')^2 - \sigma(\sigma - \omega) = K$ is constant, implying
\[ \alpha = \frac{1}{n-2} + \frac{(n-2)K + \mu(x)(A+ \omega r)^2}{c^2}.\]
However, if $\omega \neq 0$, in order for $g_N$ to admit a non-Killing homothetic field, it must be flat 
(cf. pg. 242 of \cite{KN}).  Therefore we have:
\begin{equation}
\label{eqn_alpha}
\alpha = \left\{ \begin{array}{cc} \frac{1}{n-2} + \frac{(n-2)K + \mu(x)A^2}{c^2},  &  \omega = 0 \\\\  \frac{1}{n-2} + \frac{(n-2)K}{c^2}, & \omega \neq 0. \end{array} \right.
\end{equation}
\end{proof}
 
We separately analyze the cases in which $\omega = 0$ and $\omega \neq 0$.  If $\omega = 0$, then $A \neq 0$,
and the possible solutions to (\ref{eqn_sigma_r}) are (up to shifting $t$ and rescaling $V$ and $\sigma$):
$\sigma(t) = \cos(\kappa t)$, $\sigma(t)=e^{\kappa t}$, $\sigma(t)=\sinh(\kappa t)$, or $\sigma(t)=\cosh(\kappa t)$, 
where $\kappa=\sqrt{\frac{1}{|A|}}$.  These all produce local examples.

We are interested in determining when it is possible to construct an example with $g$ complete.  
To simplify notation, we assume $\kappa=1$.

\begin{example}
\label{example_cosh}
Suppose $\sigma(t) = \sinh (t)$, and 
\begin{align*}
h &= dt^2 + \cosh(t)^2 g_N,\\
f &= \int_0^t \frac{dr}{\cosh(r)},\\
V &= \cosh(t)\frac{\partial}{\partial t} + X,
\end{align*}
where $N$ is any complete space with $\mathrm{Ric}_{g_N} = \mu g_{N}$, with Killing field $X$ (possibly zero).  One can 
readily check that $V$ is a conformal field for $g= e^{\frac{2f}{n-2}} h$ (with $\omega=0$) with expansion factor $\eta=\sinh(t) + \frac{1}{n-2}$, 
that $D_V f=1$, and that $g$ is complete (since $f$ is bounded).  Moreover, choosing $\alpha$ so that
\[\alpha = \frac{1}{n-2} + n-2 + \mu\]
assures that $(M,g,f,\alpha, \lambda)$ is a GQE manifold for some $\lambda$.
\end{example}

\begin{example}
A similar example occurs with $\cosh(t)$ replaced with $e^t$ and
\[\alpha = \frac{1}{n-2} +\mu.\]
However, in this case, $f$ is given by $-e^{-t}$ (up to a constant), and the conformal metric $g= e^{\frac{2f}{n-2}} h$ is necessarily incomplete.
\end{example}

\begin{example}
Suppose $\sigma(t)=\cosh(t)$, so that $\sigma$ has a critical point at $t=0$.  Then $h$ is defined only on $(0,\infty)$
(or its negative), and the arc length with respect to $g$ is given up to constants by
\[ s(t) = \int_1^t \exp\left(\frac{c}{n-2} \int_1^z \frac{dy}{\sinh(y)}\right) dz.\]
However, $\lim_{t \to 0^-} s(t)$ is finite, so that $g$ is incomplete.  A similar argument applies if $\sigma(t)=\cos(t)$.
\end{example}

Next, we move on to the case in which $\omega \neq 0$. Perform the change of variables $r = \int_0^t \frac{dt}{\sigma'(t)}$.  
Since $ \frac{d\sigma}{dr} = \left( \frac{d\sigma}{dt} \right)^2,$  (\ref{eqn_K})  becomes
\begin{eqnarray*}
\frac{d\sigma}{dr} = \frac{K + \sigma(\sigma-\omega)}{A+\omega r}.
\end{eqnarray*}
Separating variables and completing the square produces:
\begin{eqnarray*} 
\int \frac{d\sigma}{ K - \frac{\omega^2}{4} +(\sigma - \frac{\omega}{2})^2} &=& \int \frac{dr}{A+\omega r}  \\
&=& \frac{1}{\omega} \ln|C(A+\omega r)|,
\end{eqnarray*}
for some constant $C>0$. 
Let $B = K - \frac{\omega^2}{4}$.  There are three cases depending on the sign of $B$. 
\begin{itemize}
\item If $B=0$ then $\sigma(r) - \frac{\omega}{2} = \frac{-\omega}{\ln|C(A + \omega r)| }$. 
\item If $B>0$ then $\sigma(r)  - \frac{\omega}{2} = \sqrt{B} \tan \left( \sqrt{B}\ln |C(A + \omega r)|  \right)$.
\item If $B<0$ then 
\begin{align*}
\sigma(r)  - \frac{\omega}{2} &= \sqrt{-B} \tanh \left( \sqrt{-B}\ln |C(A + \omega r)|  \right)\\
 &= \sqrt{-B} \frac{|C(A+\omega r)|^{2\sqrt{-B}}-1}{|C(A+\omega r)|^{2\sqrt{-B}}+1}.
\end{align*}

\end{itemize}

Computing the derivative of $f$ with respect to $r$ using $f'(t)=\frac{c}{v_0(t)}$ and (\ref{eqn_v_r}) gives
\[ \frac{df}{dr} = \frac{c}{A+\omega r}.\] So $f(r) = c\ln(D|A+\omega r|)$ for a constant $D>0$.  

Thus, we have completely determined the local structure of $g,f$ and $V$ in the $\omega \neq 0$ case.
Conversely, given constants $\omega \neq 0$, $c\neq 0$, $C>0, D>0$, $A,B$, we can use the above formulas for 
$f(r)$ and $\sigma(r)$ to construct local examples; the parameter $t$ may be recovered by 
$t(r) = \int \sqrt{\sigma'(r)} dr$.  Next, we are interested in analyzing which of these examples is complete.

We begin with a function $\sigma(r)$ of one of the three forms above, defined on a maximal interval $I$ 
such that $\frac{d \sigma}{dr}>0$. 
To simplify calculations we assume that $\omega=1$ by rescaling $V$ and $\sigma$; $A=0$ by shifting $s$; 
$D=1$ by shifting $f$; and $r>0$ by symmetry. In each of the following cases, $f(r)= c\ln(r)$.
\begin{itemize}
\item  If $B=0$, then
\begin{eqnarray*}
\sigma(r)&=& \frac{1}{2} - \frac{1}{\ln(Cr)}, \\
\frac{d \sigma}{dr} &=& \frac{1}{r \ln(Cr)^2}.
\end{eqnarray*}
$\sigma$ is undefined at $r=1/C$, so we consider $I=(0,1/C)$ or $(1/C, \infty)$.  
The arc-length parameter for $g$ is given by 
\begin{eqnarray*}
s(r)=\int e^{\frac{f}{n-2}} dt &=& \int  r^{\frac{c}{n-2}}  \left( \frac{1}{r^{1/2}\ln(Cr)}\right) dr \\
&=& \int \frac{r^{\frac{2(c+1)-n}{2(n-2)}}}{\ln(Cr)} dr.
\end{eqnarray*}
By analyzing the limiting behavior of $s(r)$ at $r=0^+, 1/C^{\pm}$ and $\infty$, we find that $g$ is complete
with $I=(0,1/C)$ if and only if $c \leq -\frac{n-2}{2}$ and 
with $I=(1/C, \infty)$ if and only if $c \geq -\frac{n-2}{2}$.

\item If $B>0$, then 
\begin{eqnarray*}
\sigma(r)&=& \frac{1}{2} + \sqrt{B}\tan(\sqrt{B}\ln(Cr)), \\
\frac{d \sigma}{dr} &=& \frac{B\sec^2(\sqrt{B}\ln(Cr))}{r}.
\end{eqnarray*}
We take the interval $I=(\frac{1}{C}e^{\frac{-\pi/2+k}{\sqrt{B}}},\frac{1}{C}e^{\frac{\pi/2+k}{\sqrt{B}}})$ for 
any integer $k$. $t$ is given by 
\[ t = \int \frac{\sqrt{B}\sec(\sqrt{B}\ln(Cr))}{\sqrt{r}}dr, \]
which implies that $t$ is defined on $(-\infty, \infty)$.  Since $f$ is bounded in this case, $g$ is complete. 

\item If $B<0$, then 
\begin{eqnarray*}
\sigma(r) &=& \frac{1}{2}+\sqrt{-B} \frac{|Cr|^{2\sqrt{-B}}-1}{|Cr|^{2\sqrt{-B}}+1},\\
\frac{d \sigma}{dr} &=& \frac{-4B|Cr|^{2\sqrt{-B}-1}}{(|Cr|^{2\sqrt{-B}}+1)^2},
\end{eqnarray*}
and we take $I=(0,\infty)$. The arc-length with respect to $g$ is:
\begin{equation}
\label{eqn_arc_length}
s(r)=\int \frac{2\sqrt{-B}r^{\frac{c}{n-2}} (Cr)^{\sqrt{-B}-1/2}}{(Cr)^{2\sqrt{-B}}+1}.
\end{equation}
For no values of $B<0,C>0,c\neq 0$ does $|s(r)|$ limit to infinity at $r=0$ and $r=\infty$; 
thus metrics of this form are incomplete.
\end{itemize}

At this point we have a full understanding of the $f=f(t)$ case, in both the $\omega=0,\omega\neq0$ subcases.
For future reference, we analyze the completeness of the conformal field $V$.
\begin{lemma}
\label{lemma_completeness}
If $(M,g)$ is complete and non-compact in the $f=f(t)$ case, then the conformal field $V$ is not complete.
\end{lemma}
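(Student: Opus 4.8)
The plan is to use the decoupled form of the integral curves of $V$ established above and to reduce the completeness of $V$ to an autonomous first-order ODE for $\sigma$ along the flow, whose solutions turn out to leave the relevant $t$-interval in finite time.

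Recall that in the present setting (non-compact, $f=f(t)$) there are no critical points of $\sigma$, $c\neq 0$, $h=dt^2+\sigma'(t)^2g_N$ on $I\times N$ with $\sigma'>0$ on $I=(a,b)$, and $V=v_0(t)\frac{\partial}{\partial t}+V_0$, where $v_0=\frac{c}{f'(t)}$ is a nonvanishing function of $t$ alone and $V_0$ is a fixed homothetic field on $(N,g_N)$. Thus an integral curve $(t(\tau),x(\tau))$ of $V$ satisfies the decoupled system $\dot t=v_0(t(\tau))$, $\dot x=V_0(x(\tau))$, so if $V$ were complete then every maximal solution of the scalar ODE $\dot t=v_0(t)$, with image in $I$, would be defined on all of $\R$; since $g$ is a global type-(I) warped product here, the curve leaves $M$ as soon as $t$ leaves $I$. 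First I would rewrite $v_0$ intrinsically: by (\ref{eqn_v_r}) and the conserved quantity (\ref{eqn_K}) we have $v_0=\sigma'(A+\omega r)$ and $(A+\omega r)(\sigma')^2=K+\sigma^2-\omega\sigma$, hence $v_0=\frac{\sigma^2-\omega\sigma+K}{\sigma'}$ (legitimate since $\sigma'\neq 0$).

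Next I would observe that along an integral curve of $V$, $\frac{d}{d\tau}\sigma(t(\tau))=\sigma'(t)\,\dot t=\sigma^2-\omega\sigma+K=\big(\sigma-\tfrac{\omega}{2}\big)^2+B$, where $B=K-\tfrac{\omega^2}{4}$ as in the classification above. Since $v_0$ never vanishes, $t(\tau)$ is strictly monotone and hence converges to an endpoint of $I$, and the elapsed time to reach it equals $\int\frac{d\sigma}{(\sigma-\omega/2)^2+B}$, taken from $\sigma(t_0)$ to the limiting value $\sigma$ attains there. Now I would invoke the completeness of $g$: by the classification of complete examples above, the cases with $B<0$ do not occur — for $\omega\neq0$ this was shown via (\ref{eqn_arc_length}), and for $\omega=0$ the $B<0$ solutions are $\sigma=\cos$ and $\sigma=\cosh$, which have critical points and so are excluded here — so $B\geq 0$. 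If $B>0$ the integrand $\frac{1}{(\sigma-\omega/2)^2+B}$ is bounded and decays like $\sigma^{-2}$, so the integral to either endpoint value of $\sigma$ (finite or $\pm\infty$) converges and the flow of $V$ escapes $I$ in finite time; if $B=0$ then $\int\frac{d\sigma}{(\sigma-\omega/2)^2}$ diverges only at $\sigma=\tfrac{\omega}{2}$, and since $\sigma$ is strictly monotone on $I$ that value is the limit of $\sigma$ at at most one endpoint, so at the other endpoint the flow again escapes in finite time. In both cases $V$ is incomplete.

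The hard part is the final bookkeeping: matching, in each surviving case, the sign of $B$, the range of $\sigma$ over $I$, and which endpoint (if any) has limiting value $\tfrac{\omega}{2}$. But the conserved-quantity identity $\dot\sigma=(\sigma-\tfrac{\omega}{2})^2+B$ along the flow collapses this into the elementary fact that $\int\frac{d\sigma}{(\sigma-\omega/2)^2+B}$ converges at every endpoint when $B>0$ and at all but at most one endpoint when $B=0$, which is exactly what forces the flow of $V$ to leave $I$, hence $M$, in finite time.
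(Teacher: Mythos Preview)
Your argument is correct in spirit and, in one respect, cleaner than the paper's: rewriting $v_0$ via the conserved quantity $K$ to get the autonomous equation $\dot\sigma=(\sigma-\tfrac{\omega}{2})^2+B$ along the flow gives a unified treatment of the $\omega=0$ and $\omega\neq0$ cases, whereas the paper argues case-by-case (subtracting off the $V_0$ part to reduce to $v_0(t)\partial_t$, then checking $\cosh(t)\partial_t$ is incomplete on $\R$ when $\omega=0$, and rewriting as $r\partial_r$ when $\omega\neq0$). Both proofs ultimately lean on the prior classification of which $\sigma$ yield a complete $g$.

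There is, however, a small gap in your justification for excluding $B<0$ when $\omega=0$. You say $\sigma=\cos$ and $\sigma=\cosh$ ``have critical points and so are excluded here,'' but that is not quite right: on the interval $I$ where $\sigma'>0$, neither has a critical point (for $\cosh$ one has $I\subset(0,\infty)$; for $\cos$, $I$ is an interval where $\sin<0$). What actually rules these out is the completeness of $g$: the paper's example analysis (immediately preceding this lemma) shows that for $\sigma=\cosh$ the $g$-arclength $s(t)$ has a finite limit as $t\to0^+$, so $g$ is incomplete, and similarly for $\sigma=\cos$. This matters, because for $B<0$ the integral $\int\frac{d\sigma}{(\sigma-\omega/2)^2+B}$ can diverge at \emph{both} endpoints of the range of $\sigma$ (e.g.\ $\sigma=\cos$ on $(\pi,2\pi)$ gives $\sigma\to\pm1$ and divergence at both ends), so your $B\ge0$ dichotomy genuinely needs this exclusion. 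Once you replace ``have critical points'' with ``give incomplete $g$ by the arclength computations above,'' your proof goes through.
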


\begin{proof}
Suppose $V$ is complete.  By Lemma \ref{Lemma_ConfField_Local}, $V$ is of the form $V = v_0(t) \frac{\partial}{\partial t}+ V_0$, 
where $V_0$ is a fixed homothetic field for $g_N$.  If $g$ is complete, so is $g_N$.  It follows that $V_0$ is a complete field
(see p. 234 of \cite{KN}) on $N$ and extends naturally to a complete vector field on $M$.  Then 
$V-V_0 = v_0(t) \frac{\partial}{\partial t}$ is a complete field on $M$ and therefore on $\R$.  We analyze the two cases.

If $\omega=0$, then $v_0(t) \frac{\partial}{\partial t}$ = $\cosh(t)\frac{\partial}{\partial t}$, 
which is not a complete field on $\R$ by elementary considerations.  This is a contradiction.

If $\omega \neq 0$, we can write $v_0 \frac{\partial}{\partial t}$ as $(A+\omega r) \frac{\partial}{\partial r} 
= r \frac{\partial}{\partial r}$.  The flow of this vector 
field at time $\epsilon$ is given by scaling $r$ by $e^\epsilon$.  Such flows are globally well-defined only on 
$\R$, $(-\infty, 0)$, and $(0, \infty)$.  None of the complete examples we considered above were defined on such a subset,
again leading to a contradiction.
\end{proof}

\subsection{$f=f(x)$ case}
\label{sec_conf_fields_f(x)}
We also analyze the case of a conformal field in the $f=f(x)$ setting, so that $n \geq 4$ and $Q$ is constant
(by Observation \ref{sigma_u} and Theorem \ref{thm_local_classification}).  Without loss of generality, assume $f$ is non-constant.  We prove:
\begin{proposition}
\label{prop_f(x)_cases}
If the $f=f(x)$ case occurs, then 
$\sigma$ solves
\begin{equation}
\label{eqn_ode_sigma}
\sigma''' = -\frac{Q}{n-1} \sigma'.
\end{equation}
Moreover, $D_{V_t} f = c$ and $D_{V_t} \alpha=  0$, and either:
\begin{enumerate}
 \item $V_t$ is a non-homothetic conformal field on $g_N$ for some $t$, or else
 \item $V_t$ is a Killing field on $g_N$, independent of $t$, $v_0=v_0(t)$ is a constant multiple of $\sigma'(t)$,
and $\sigma'(t)$ is non-constant.
\end{enumerate}
\end{proposition}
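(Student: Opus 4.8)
The plan is to read off everything from Proposition~\ref{prop_conf_fields} together with the GQE equation for the fibre metric $g_N$, which is available here because Observation~\ref{sigma_u} permits us to invoke Lemma~\ref{lemma_local_form_f}(2b) with $\sigma$ in place of $u$: thus $Q$ is a constant, $\alpha=\alpha(x)$, and $g_N$ satisfies $\mathrm{Ric}_{g_N}=\left(\frac{1}{n-2}-\alpha\right)df\otimes df+Pg_N$ for a constant $P$. The ODE~(\ref{eqn_ode_sigma}) is then immediate: comparing the $\left(\frac{\partial}{\partial t},\frac{\partial}{\partial t}\right)$-component of $\mathrm{Ric}_h$ given by the warped-product formula~(\ref{eqn_Ric_h_WP}) with the one given by~(\ref{eqn_Ric_h})---the $df\otimes df$ term contributing nothing since $f=f(x)$---yields $-(n-1)\frac{v''}{v}=Q$ with $v=\sigma'$, which is exactly $\sigma'''=-\frac{Q}{n-1}\sigma'$. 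Next, write $V=v_0\frac{\partial}{\partial t}+V_t$; since $f$ and $\alpha$ are functions of $x$ alone, $D_Vf=D_{V_t}f$ and $D_V\alpha=D_{V_t}\alpha$, so the structure-preserving hypotheses give $D_{V_t}f=c$ and $D_{V_t}\alpha=0$.

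For the dichotomy, recall from Proposition~\ref{prop_conf_fields}(1) that $V_t$ is a conformal field of $g_N$ for every $t$, with $L_{V_t}g_N=2\omega_t g_N$. If $\omega_t$ is non-constant on $N$ for some $t$ we are in case~(1) and there is nothing further to prove, so assume every $\omega_t$ is constant in $x$, i.e.\ each $V_t$ is homothetic for $g_N$. I claim each $V_t$ is in fact Killing. Indeed, if $\omega_{t_0}\neq0$ for some $t_0$, then $V_{t_0}$ is a non-Killing homothetic field on $(N,g_N)$, which forces $g_N$ to be flat (cf.\ p.~242 of~\cite{KN}) and hence $\mathrm{Ric}_{g_N}\equiv0$; but since $n\geq4$ we have $\dim N=n-1\geq3$, so the tensor $\left(\frac{1}{n-2}-\alpha\right)df\otimes df+Pg_N$ cannot vanish identically---if $P\neq0$ its rank is at least $n-2\geq2$ everywhere, while if $P=0$ it is nonzero at any point where $df\neq0$, and such a point exists because $f$ is non-constant and $\alpha\neq\frac{1}{n-2}$---a contradiction. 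Hence $\omega_t\equiv0$ and every $V_t$ is Killing.

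It remains to extract the precise form of $V$ and $\sigma$ in case~(2). With $\omega_t\equiv0$ the ``Moreover'' clause of Proposition~\ref{prop_conf_fields} reads $\sigma=\frac{v_0\sigma''}{\sigma'}$. If $\sigma''$ vanished on an open $t$-interval then $\sigma$ would be constant there, so $\sigma'$ would vanish there as well, producing an open set of critical points of $\sigma$ and contradicting the isolation of critical points in Lemma~\ref{lemma_hessian}; hence $\sigma'$ is non-constant. Next, Proposition~\ref{prop_conf_fields}(2) with $\omega_t=0$ gives $\frac{\partial}{\partial t}\!\left(\frac{v_0}{\sigma'}\right)=0$, so $v_0=\sigma'(t)\,\rho(x)$ for some function $\rho$ on $N$; then $\sigma=\frac{\partial v_0}{\partial t}=\sigma''(t)\,\rho(x)$, and since $\sigma''\not\equiv0$ and the left side depends only on $t$, $\rho$ must be a constant $A$ (with $A\neq0$, else $\sigma\equiv0$). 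Thus $v_0=A\sigma'(t)$ depends only on $t$, whence Proposition~\ref{prop_conf_fields}(3) gives $\frac{\partial V_t}{\partial t}=-\frac{1}{(\sigma')^2}\nabla^N v_0=0$, so $V_t$ is independent of $t$; together with the previous paragraph this is precisely case~(2). The main obstacle is the implication ``homothetic $\Rightarrow$ Killing'' above: it is the one place requiring both the external rigidity theorem for homothetic fields and the full strength of the GQE constraint on $g_N$ (and the only place where $\alpha\neq\frac{1}{n-2}$ is genuinely invoked); the rest is routine bookkeeping with Proposition~\ref{prop_conf_fields}.
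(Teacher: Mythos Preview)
Your proof is correct and uses the same ingredients as the paper's: Proposition~\ref{prop_conf_fields}, the fibre equation from Lemma~\ref{lemma_local_form_f}(2b), and the ``non-Killing homothety forces $g_N$ flat'' step followed by a rank comparison against $\mathrm{Ric}_{g_N}=\left(\frac{1}{n-2}-\alpha\right)df\otimes df+Pg_N$. Your organization is slightly more streamlined---you dispose of $\omega_t\neq 0$ once at the outset and then read off $v_0=A\sigma'(t)$, whereas the paper first handles a separate $\sigma'$-constant sub-case and invokes the homothetic argument a second time after establishing $v_0=v_0(t)$---but the logic is otherwise the same.
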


Examples of case (2) are found using section \ref{sec_f(x)}. After the proof, we demonstrate that case (1) may occur as well.
\begin{proof}
First, by Lemma \ref{lemma_local_form_f} and Observation \ref{sigma_u},
$\sigma$ satisfies (\ref{eqn_ode_sigma}) and $\alpha=\alpha(x)$.
Additionally, since $D_V f = c$ and $f=f(x)$, we have $D_{V_t} f = c$ for each $V_t$, and 
likewise $D_{V_t} \alpha = D_V \alpha = 0$.  

By Proposition \ref{prop_conf_fields}, for each $t$, $V_t$ is a conformal field 
with expansion factor $\omega_t(x)$ on $N$.  If any $\omega_t(\cdot)$ is non-constant on $N$, 
$V_t$ is non-homothetic on $N$, and we are in case (1).

Otherwise, $\omega$ depends only on $t$.  By (\ref{eqn_ode_sigma}), if $\sigma'$ is constant, then 
$Q=0$.  
If $\omega_t \neq 0$ for some $t$, then $g_N$ admits a homothetic field that is non-isometric and so $g_N$ is flat.  
Combining (\ref{eqn_Ric_h}) and (\ref{eqn_Ric_h_WP}) shows that $\left(\frac{1}{n-2}-\alpha\right)df \otimes df$
is pointwise proportional to $g_N$.  By comparing rank, it follows (since $\alpha \neq \frac{1}{n-2})$ that $f$ is constant, a contradiction.
We conclude that $\omega_t$ is identically zero.  But Proposition \ref{prop_conf_fields} 
and the constancy of $\sigma'$ imply $\sigma \equiv 0$, a contradiction.

Thus, we may assume $\sigma'$ is non-constant, so that
$\sigma''(t)$ vanishes only for isolated $t$ by (\ref{eqn_ode_sigma}).
From $\sigma= \frac{v_0}{\sigma'} \sigma''  + 
\omega_t$ of Proposition \ref{prop_conf_fields}, we see that $v_0=v_0(t)$. Then by Corollary \ref{cor_v_0(t)},
$V_t$ is independent of $t$ and $\omega_t=\omega$ is constant in $t$ and $x$.
If $\omega\neq 0$, $g_N$ admits a homothetic field that is not Killing, and the same argument
as above leads to a contradiction.  Thus $\omega=0$, and 
Proposition \ref{prop_conf_fields} implies $v_0$ is a constant multiple of $\sigma'(t)$.
\end{proof}

We demonstrate that the first case of Proposition \ref{prop_f(x)_cases} can occur, at least locally.
\begin{example}
\label{ex_varying_conf_field}
Suppose that
$(K,h_K)$ is some $n$-manifold ($n \geq 3$) satisfying
\[\mathrm{Ric}_{h_K} = \left(\frac{1}{n-2}-\alpha_K\right) df \otimes df - (n-1) h_K\]
for a non-constant function $f:K \to \R$ and some constant $\alpha_K$ (cf. Example \ref{example_product}).  
Define $M = \R^2 \times K$ with coordinates $(t,y)$ on $\R^2$ and metric 
\[h_M = dt^2 + \cosh^2 t \left(dy ^2+\cosh^2 y\, h_K\right),\]
which satisfies
\[\mathrm{Ric}_{h_M} = \left(\frac{1}{n}-\alpha_M\right) df \otimes df - (n+1) h_M\]
for an appropriate constant $\alpha_M$. Note the $t$-level sets each admit a conformal field
$\cosh(y) \frac{\partial}{\partial y}$ with expansion factor $\sinh(y)$.  We  define
a vector field $V$ on $M$ by:
\[V = \left(\cosh(t)\sinh(y)\int_0^t \frac{\varphi(z)}{\cosh(z)}dz\right)
\frac{\partial}{\partial t} + \varphi(t) \cosh(y) \frac{\partial}{\partial y},\]
where
\[\varphi(t)=\sin(2\arctan(\tanh(t/2))).\]
Direct calculation (using Proposition \ref{prop_conf_fields_app}) shows that $V$ is a conformal field of $h$ with expansion
factor $\sigma(t,y) = \sinh(y)\left(\sinh(t) \int_0^t \frac{\varphi(y)}{\cosh(y)}dr + \varphi(t)\right)$;
its restriction $V_t$ to a level set of $t$
is a conformal field with expansion factor $\omega_t(y)=\varphi(t)\sinh(y)$; in particular, $V_t$ is non-homothetic for almost all $t \in \R$.

Moreover, $V$ preserves the GQE structure of $(M, e^{\frac{2f}{n}} h_M)$: $D_V f=0$, since $f$ is a function on $K$, and $D_V \alpha_M = 0$ since $\alpha_M$ is constant.
\end{example}

\begin{remark}
In the above example, $h_M$ admits a warped product structure with respect to the level sets of $\sigma$, by our
classification theorem.  However, we point out this
structure is not apparent from the expression of $h_M$ in coordinates $t, y$.
\end{remark}

\subsection{Complete conformal fields}
Here we prove the generalization of the theorem of Yano and Nagano on complete conformal fields on Einstein spaces stated
in the introduction.

\begin{proof}[Proof of Theorem \ref{thm_complete_case_conformal_fields}]
Suppose $(M,g,f,\alpha,\lambda)$ is a complete GQE manifold equipped with a structure-preserving non-homothetic
conformal vector field $V$: $L_V g = 2\eta g$.  Assume $V$ is complete.  If $\eta$ has a critical point, then by
Theorem \ref{thm_compact_case_conformal_fields2}, $(M,g)$ is a space form and $f$ is constant.  However, the round sphere is the only
space form admitting a complete non-homothetic conformal field.

Otherwise, $M$ is non-compact, $\sigma=\eta-\frac{c}{n-2}$ has no critical points, and the work of sections \ref{sec_conf_fields_f(t)} and \ref{sec_conf_fields_f(x)} applies.
If $f=f(t)$, then Lemma \ref{lemma_completeness} implies that $V$ is incomplete, a contradiction. Thus $f=f(x)$ on $M$.

In this case, since $g$ is complete, Theorem \ref{thm_local_classification} and Observation \ref{sigma_u} imply that $h$ 
is  one-dimensional warped product  defined for all $t \in \R$.  Since $\sigma'$ solves (\ref{eqn_ode_sigma}) 
and has no zeros we conclude that $\sigma'(t)$ is (up to an overall scaling of $\sigma$ and of $V$, 
and a translation of $t$), 
equal to $1$, $e^{\kappa t}$ or $\cosh(\kappa t)$, where $\kappa = \sqrt{\frac{-Q}{n-1}}$.    
If $\sigma' \equiv 1$, then by Proposition \ref{prop_conf_fields}, $\sigma' = \omega_t$, which implies $\omega_t$
depends only on $t$.  This contradicts part (2) of Proposition \ref{prop_f(x)_cases}.

On the other hand, the following argument, which is an adaptation of  Yano-Nagano's argument in the Einstein case, shows that $\nabla \sigma$ must  be a complete field if $V$ is complete.  This is a contradiction, since $e^{\kappa t} \frac{\partial}{\partial t}$  and $\cosh(\kappa t) \frac{\partial}{\partial t}$ are not complete fields on $\mathbb{R}$. 

Computing the Laplacian on the warped product $h$ gives:
\begin{align*}
 L_{\nabla \sigma} h &= 2\frac{\Delta \sigma}{n} h\\
&= 2\sigma'' h.
\end{align*}
However, by Proposition \ref{prop_f(x)_cases}, $\sigma'' = -\frac{Q}{n-1}\sigma + c_0$, where $Q$ and $c_0$ are constants.
In particular, $W=\frac{Q}{n-1} V + \nabla \sigma$ satisfies $L_W h = 2 c_0 h$.
In the metric $g$,
\begin{align*}
 L_W g = \left(\frac{2D_W f}{n-2} + c_0\right)g.
\end{align*}
However, $D_W f$ is constant, since $D_V f$ is constant and $\nabla \sigma$ is orthogonal to $\nabla f$.  It
follows that $W$ is a homothetic field for the complete metric $g$, and so $W$ is complete (see p. 234 of \cite{KN}).
Since the set of complete conformal fields on a Riemannian manifold forms a Lie algebra, $\nabla \sigma$ is complete, a 
contradiction to the form of $\sigma$.
\end{proof}

\section{Gradient Ricci solitons and $m$-quasi-Einstein metrics}
\label{sec_solitons}
In this section we specialize to the case where $\alpha$ and $\lambda$ are constant, first obtaining some rigidity for 
the function $Q$ of Proposition \ref{prop_Ric_h}.   

\begin{proposition} 
If $(M,g,f)$ is a complete gradient Ricci soliton or a complete $m$-quasi-Einstein manifold, 
then $Q$ is constant if and only if $f$ is constant. 
\label{prop_Q_f}
\end{proposition}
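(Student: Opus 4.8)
The plan is to analyze the structure equations that force $Q$ to be constant and show that this forces $\nabla f$ to vanish. Recall that for a gradient Ricci soliton ($\alpha = 0$, $\lambda$ constant) the rescaled metric $h = e^{-\frac{2f}{n-2}}g$ satisfies $\mathrm{Ric}_h = \frac{1}{n-2}\,df \otimes df + Q h$ with $Q$ given by \eqref{Qeqn}, namely $Q = \frac{1}{n-2}\left(\Delta_g f - |\nabla f|_g^2 + (n-2)\lambda\right)e^{\frac{2f}{n-2}}$; the $m$-quasi-Einstein case is the same with $\alpha = -\frac{1}{m}$ and the coefficient on $df \otimes df$ replaced by $\frac{1}{n-2}+\frac{1}{m}$, still a positive constant. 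One direction is trivial: if $f$ is constant then $\Delta_g f = |\nabla f|^2_g = 0$ and $e^{\frac{2f}{n-2}}$ is constant, so $Q$ is constant. For the converse, suppose $Q$ is constant (and nonconstancy of $f$, for contradiction).

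First I would use the well-known conserved quantities for gradient Ricci solitons: on a gradient Ricci soliton one has $\Delta_g f + |\nabla f|_g^2 = $ (pointwise) controlled, and more precisely the classical identity $R_g + |\nabla f|_g^2 - 2\lambda f = $ const and $R_g + \Delta_g f = n\lambda$ (contracted soliton equations). Combining these, $|\nabla f|^2_g = 2\lambda f - R_g + \text{const}$ and $\Delta_g f = n\lambda - R_g$, so $\Delta_g f - |\nabla f|^2_g = (n-2)\lambda + R_g - 2\lambda f - \text{const}$. Plugging into \eqref{Qeqn}, the condition that $Q$ be constant becomes a single scalar equation relating $R_g$, $f$, and $e^{\frac{2f}{n-2}}$. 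The idea is then to differentiate this relation along $\nabla f$: using the soliton identity $\nabla R_g = 2\,\mathrm{Ric}_g(\nabla f, \cdot)$ and $\mathrm{Ric}_g(\nabla f,\nabla f) = \lambda |\nabla f|^2_g - \frac{1}{2}\langle \nabla |\nabla f|^2_g, \nabla f\rangle$, one gets a first-order ODE for the relevant quantities restricted to a trajectory of $\nabla f$, which should force $\nabla f \equiv 0$ on an open set, hence everywhere by analyticity of solitons (Ivey, or the fact that solitons are real-analytic in harmonic coordinates). An alternative, perhaps cleaner, route: since $Q$ constant forces $\mathrm{Ric}_h = \frac{c\,df\otimes df + Qh}{}$ with $c, Q$ constant, the metric $h$ is itself essentially a (generalized) quasi-Einstein metric with \emph{constant} coefficients; one can then apply the warped-product analysis — if $\nabla f \ne 0$ somewhere, Lemma \ref{lemma_local_form_f} (or its proof) gives $h$ a warped-product form $dt^2 + v(t)^2 g_N$ with $g_N$ Einstein and $f = f(t)$, turning the problem into an ODE system in $t$ that one shows has no bounded-geometry complete solution with nonconstant $f$ unless the curvature conditions degenerate.

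The main obstacle I expect is the completeness input: the statement is false without it (locally there are plenty of GQE warped products with $Q$ constant and $f$ nonconstant — indeed Corollary \ref{cor_GQE_conformal} and the examples of section \ref{sec_f(x)} produce exactly such $h$). So the proof must genuinely use that $(M,g,f)$ is complete, which for solitons means invoking the structure theory: completeness forces $f$ to grow at a controlled rate (quadratically for shrinkers, by Cao--Zhou), the potential function is either constant or proper with the right sublevel-set geometry, and the scalar curvature is bounded below. Pinning down exactly which completeness consequence kills the nonconstant-$f$ solutions of the ODE — and handling the steady and expanding cases (and the $m$-quasi-Einstein case, where the analogous structure results of Case--Shu--Wei and Kim--Kim apply) uniformly — is the delicate part; I would expect the argument to split into the subcases $\lambda > 0$, $\lambda = 0$, $\lambda < 0$ (and $m$-quasi-Einstein separately), each reduced to showing a certain ODE invariant cannot stay in the allowed range unless $f' \equiv 0$.
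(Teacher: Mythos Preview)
Your starting point is right---Hamilton's identity is exactly what the paper uses---but you miss a key algebraic simplification that makes the argument far shorter than you anticipate. When you combine $R_g + \Delta_g f = n\lambda$ with $R_g + |\nabla f|_g^2 - 2\lambda f = \text{const}$, the scalar curvature \emph{cancels}: you get $\Delta_g f - |\nabla f|_g^2 = -2\lambda f + c$ for a constant $c$ (this is Hamilton's identity in its cleanest form). Your formula still carries an $R_g$, which is why you then reach for $\nabla R_g = 2\,\mathrm{Ric}_g(\nabla f,\cdot)$, ODEs along flow lines, and analyticity. None of that is needed: plugging the clean identity into \eqref{Qeqn} gives
\[
Q = \tfrac{1}{n-2}\bigl(-2\lambda f + c + (n-2)\lambda\bigr)\,e^{\frac{2f}{n-2}},
\]
which is a function of $f$ alone. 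If $f$ is nonconstant its range is a nondegenerate interval, and a (linear)$\times$(exponential) function of one real variable is constant on an interval only when both the linear and exponential parts are trivial; this forces $\lambda = 0$ and $c = 0$ by elementary calculus. So there is no case split over the sign of $\lambda$: constancy of $Q$ with $f$ nonconstant already pins you to the steady case with $\Delta_g f = |\nabla f|_g^2$.

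The completeness input is then much more specific than growth estimates or analyticity. From the traced steady soliton equation $R_g + \Delta_g f = 0$ you get $R_g = -|\nabla f|_g^2 \le 0$, while Chen's theorem gives $R_g \ge 0$ on a complete steady soliton; hence $\nabla f \equiv 0$. For the $m$-quasi-Einstein case the paper runs the identical scheme with the Kim--Kim identity $\Delta_g f - |\nabla f|_g^2 = m(\lambda - \mu e^{2f/m})$ in place of Hamilton's; the same one-variable argument forces $\lambda = \mu = 0$, and then Case's nonexistence theorem finishes. Your proposed alternative route through the warped-product lemmas is both unnecessary and somewhat circular here, since this proposition is precisely what the paper uses (Corollary~\ref{cor_soliton_f(t)}) to exclude the $f=f(x)$ warped-product case for solitons.
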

\begin{proof}
First note that if $f$ is constant, the same is true for $Q$ by definition.  Now we prove the converse.

\paragraph{\emph{Gradient Ricci soliton case:}}
If $g$ is a gradient Ricci soliton ($\alpha=0$ and $\lambda$ constant)  we have the following formula due to Hamilton (see Proposition 1.15 of \cite{Chowetc} for a proof):
\begin{equation}
\label{Sol_int}  \Delta f - |\nabla f|^2 =  -2\lambda f + c, 
\end{equation}
for some constant $c$.  Plugging into the formula for $Q$ (\ref{Qeqn}), we obtain 
\[ Q = \frac{1}{n-2} \left( - 2 \lambda f + c +(n-2) \lambda \right) e^{\frac{2f}{n-2}}. \]
From this, one can see that if $dQ$ vanishes identically and $df \neq 0$ at some point, then
$\lambda = c = 0$.  Then we have 
\[ \Delta f - |\nabla f|^2 = 0 \]
Moreover $\Delta f = - R$ from the trace of the soliton equation, where $R$ is the scalar curvature of $g$.
In particular,
\[ - R - |\nabla f|^2 = 0. \]
However, Chen has shown that if $\lambda =0$ then  $R \geq 0$ \cite{Chen} (cf. \cite{Yokota}, \cite{Zhang}),
implying $R=|\nabla f| = 0$ when $c=0$.

\vspace{5mm}
\paragraph{\emph{$m$-quasi-Einstein case:}}
If $g$ is $m$-quasi-Einstein ($m>0$, $\alpha = \frac{-1}{m}$ and $\lambda$ constant) we have the 
equation proven by Kim--Kim  \cite{KimKim} that 
\[ \Delta f - |\nabla f|^2 = m \left( \lambda - \mu e^{\frac{2f}{m}}\right) \] for some constant $\mu$,
which gives
\[ Q = \frac{1}{n-2} \left( (n+m-2) \lambda  - \mu e^{\frac{2f}{m}} \right) e^{\frac{2f}{n-2}}.\]
If $dQ$ vanishes identically and $df\neq 0$ at some point, then $\lambda= \mu = 0$.
By a result of Case, $f$ is constant \cite{Case_nonexistence}.
\end{proof}

\begin{corollary}
\label{cor_soliton_f(t)}
Suppose $(M,g,f)$ is a complete gradient Ricci soliton or a complete $m$-quasi-Einstein manifold.  If $(M,g,f)$
admits a non-homothetic structure-preserving conformal diffeomorphism or conformal field, then 
only case (\ref{eqn_g_f(t)}) in Theorem \ref{thm_local_classification} may occur.
\end{corollary}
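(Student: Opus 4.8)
The plan is to reduce the corollary to two results already established: the dichotomy of Theorem~\ref{thm_local_classification} (together with Observation~\ref{sigma_u} in the conformal-field case) and the $Q$-rigidity of Proposition~\ref{prop_Q_f}. First I would observe that under either hypothesis $\alpha$ is a constant, namely $\alpha \equiv 0$ for a gradient Ricci soliton and $\alpha \equiv -\frac{1}{m}$ for an $m$-quasi-Einstein manifold, so in particular $\alpha \neq \frac{1}{n-2}$. Since $g$ is complete, Theorem~\ref{thm_local_classification} applies globally and puts $g$ in one of the two forms (\ref{eqn_g_f(t)}) or (\ref{eqn_g_f(x)}); in the conformal-field case I would instead invoke Observation~\ref{sigma_u}, which transfers the local and global classifications verbatim with $u$ replaced by $\sigma$. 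It therefore remains only to exclude case (\ref{eqn_g_f(x)}).

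Next I would analyze the $f=f(x)$ case and extract the crucial fact that the function $Q$ of Proposition~\ref{prop_Ric_h} is constant. For a conformal diffeomorphism this is exactly part (2b) of Lemma~\ref{lemma_local_form_f}; for a conformal field it is recorded at the start of Section~\ref{sec_conf_fields_f(x)} and is again a consequence of Observation~\ref{sigma_u} and Theorem~\ref{thm_local_classification}. With $Q$ constant on $M$, I would apply Proposition~\ref{prop_Q_f}: a complete gradient Ricci soliton or complete $m$-quasi-Einstein manifold on which $Q$ is constant must have $f$ constant.

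Finally, I would note that once $f$ is constant the conformal factor $e^{\frac{2f}{n-2}}$ in (\ref{eqn_g_f(x)}) is constant, so after the change of base variable $s \mapsto e^{\frac{f}{n-2}}s$ the metric $g$ is already of the form (\ref{eqn_g_f(t)}) (cf.\ the remark following Theorem~\ref{thm_local_classification}). Hence only (\ref{eqn_g_f(t)}) can occur, which is the assertion. I do not expect any genuine obstacle: all of the analytic content is packaged in Proposition~\ref{prop_Q_f}, which in turn rests on Hamilton's integral identity (respectively the Kim--Kim identity) together with the curvature estimate of Chen and the nonexistence result of Case. The only point demanding a little care is verifying that the conformal-field case is actually covered, i.e.\ that the implication ``$f=f(x)\Rightarrow Q$ constant'' survives the passage from $u$ to $\sigma$; Observation~\ref{sigma_u} is precisely the statement that it does.
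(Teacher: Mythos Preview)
Your proposal is correct and follows essentially the same route as the paper: in the $f=f(x)$ case one invokes Lemma~\ref{lemma_local_form_f} (or Observation~\ref{sigma_u} for conformal fields) to conclude $Q$ is constant, then applies Proposition~\ref{prop_Q_f} to force $f$ constant, whereupon the two cases of Theorem~\ref{thm_local_classification} coincide. Your write-up is in fact a bit more explicit than the paper's (e.g.\ checking $\alpha\neq\frac{1}{n-2}$ and spelling out why constant $f$ collapses (\ref{eqn_g_f(x)}) to (\ref{eqn_g_f(t)})), but the argument is the same.
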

\begin{proof}
In the $f=f(x)$ case, $Q$ is constant by Lemma \ref{lemma_local_form_f} (and Observation \ref{sigma_u}
in the case of a conformal field).  Then $f$ is constant, so we may say without loss of generality
that $f=f(t)$.
\end{proof}

When the constant $\lambda$ is nonnegative, we also have the following. 

\begin{proposition} \label{prop_nosplitting}
If a complete gradient Ricci soliton or complete $m$-quasi-Einstein metric $(M,g)$ of the form (\ref{eqn_g_f(t)}),
\[g=  ds^2 + v(s)^2g_N,\]
with $g_N$ Einstein has $\lambda \geq 0$, then  either $g$ is rotationally-symmetric (on $\mathbb{R}^n$ or $S^n$),
or $v$ is constant and $g$ is the product metric on $\mathbb{R} \times N$. 
\end{proposition}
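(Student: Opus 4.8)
The plan is to reduce the equations to two ODEs in the base variable $s$ and then exploit $\lambda\ge 0$ through known rigidity for solitons of nonnegative scalar curvature. Since the hypothesis that $g=ds^2+v(s)^2g_N$ has the form (\ref{eqn_g_f(t)}) already includes $f=f(s)$, the curvature identity (\ref{eqn_Ric_h_WP}) together with the warped-product Hessian formulas turns $\mathrm{Ric}+\mathrm{Hess}\,f+\alpha\,df\otimes df=\lambda g$ into
\[
-(n-1)\frac{v''}{v}+f''+\alpha(f')^2=\lambda ,\qquad \frac{\mu-vv''-(n-2)(v')^2}{v^2}+f'\frac{v'}{v}=\lambda ,
\]
where $\mathrm{Ric}_{g_N}=\mu g_N$. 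If $v$ vanishes somewhere then, by Definition \ref{def_wp} and completeness, $g$ is of type (II) or (III), hence rotationally symmetric on $\mathbb R^n$ or $S^n$, and we are done; so I may assume $v>0$ everywhere, which by completeness puts us in the type (I) case $s\in\mathbb R$ with $g_N$ complete. It remains to prove that $v$ is constant.

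In the $m$-quasi-Einstein case ($\alpha=-\tfrac1m$) the equation says that the $(n+m)$-dimensional Bakry--\'Emery Ricci tensor $\mathrm{Ric}+\mathrm{Hess}\,f-\tfrac1m df\otimes df$ equals $\lambda g$. If $\lambda>0$, the Bakry--\'Emery form of Myers' theorem forces $M$ to be compact, contradicting the type (I) assumption; so here $\lambda=0$, and then the nonexistence result of Case \cite{Case_nonexistence} (the one used in Proposition \ref{prop_Q_f}) gives that $f$ is constant. Hence $g$ is Ricci-flat, the first ODE yields $v''=0$, and a positive affine function on $\mathbb R$ is constant, so $v$ is constant and $g$ is the product metric on $\mathbb R\times N$ with $N$ Ricci-flat.

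For a gradient Ricci soliton ($\alpha=0$) with $\lambda\ge 0$ the scalar curvature is nonnegative by Chen's theorem (cf.\ \cite{Chen}, \cite{Zhang}). The warped-product formula gives $R=(n-1)v^{-2}\big(\mu-2vv''-(n-2)(v')^2\big)$, so $R\ge 0$ forces $vv''\le\mu/2$. If $\mu\le 0$, then $v$ is concave and positive on all of $\mathbb R$, hence constant, and the ODEs then force $\mu=0$ with $N$ Ricci-flat --- the product case. If $\mu>0$, then $(N,g_N)$ is compact by Myers' theorem, so $M=\mathbb R\times N$ and the curve $s\mapsto(s,y_0)$ is a line (the projection onto the $\mathbb R$-factor is $1$-Lipschitz). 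Invoking the known splitting theorems for complete Ricci solitons containing a line --- the shrinking case, and the steady case, in which $|\nabla f|^2=C_0-R$ is bounded so the splitting still applies --- one concludes that $(M,g)$ is a Riemannian product $\mathbb R\times N'$ with $N'$ compact. In the steady case $N'$ is a compact steady soliton, hence Ricci-flat by Hamilton's theorem, so $g$ is Ricci-flat and we finish as before. In the shrinking case the potential on the product has the form $\tfrac\lambda2 t^2+f_{N'}$, and since $f$ is non-constant and (being a soliton potential) real-analytic, its critical set is a union of entire fibers $\{s_i\}\times N$; this can equal $\{0\}\times\mathrm{Crit}(f_{N'})$ only when $f_{N'}$ is constant, so $N'$ is Einstein and $R$ is constant. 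Finally, $R'=-2(n-1)f'v''/v$ together with real-analyticity of $g$ forces $v''\equiv 0$ (the alternative $f'\equiv 0$ would make $g$ Einstein and hence, by Myers, compact), so $v$ is constant.

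The step I expect to be the main obstacle is the shrinking sub-case with $\mu>0$: the elementary ODE analysis stalls there, since a priori $v$ could oscillate, so one must import external structure --- compactness of the fiber via Myers, the splitting theorem for shrinkers containing a line, and the triviality of compact steady solitons via Hamilton. The genuinely delicate point is then reconciling the resulting Riemannian-product splitting with the warped-product coordinates, which is exactly where comparing the critical sets of $f$ is needed.
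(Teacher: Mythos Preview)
Your route diverges from the paper's and carries two genuine gaps. In the $m$-quasi-Einstein case with $\lambda=0$, Case's nonexistence theorem \cite{Case_nonexistence} requires the Kim--Kim characteristic constant $\mu\le 0$ in addition to $\lambda\le 0$; you have only $\lambda=0$ and have not checked the former, so the conclusion that $f$ is constant is unjustified as stated. The paper instead applies the Bakry--\'Emery Cheeger--Gromoll splitting of Fang--Li--Zhang \cite{FLZ} to the $s$-line, which handles all $\lambda\ge 0$ uniformly for finite $m$. In the shrinking-soliton sub-case with $\mu>0$, the splitting you invoke is not free: the $m=\infty$ Bakry--\'Emery splitting needs $f$ bounded above or $|\nabla f|$ bounded, neither of which holds for shrinkers (where $f$ grows quadratically), so ``known splitting theorems'' is too vague here. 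Even granting a splitting along the $s$-line, your critical-set and analyticity argument is unnecessary: the Busemann function of that line is $s$ itself, so the product factor is $\{s=0\}$ and $ds^2+v(s)^2g_N = ds^2+v(0)^2g_N$ forces $v$ constant directly.

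The paper's soliton argument sidesteps all of this by a trick worth noting: since the warped-product soliton ODEs for $(v,f)$ depend only on the Einstein constant $\mu$ of $g_N$, not on $g_N$ itself, one may replace $g_N$ by the space form with the same Einstein constant without altering whether $(v,f)$ solves the system. The resulting $g$ is then locally conformally flat, and one invokes the dichotomy of Chen, Zhang, and Cao--Chen \cite{Chen,ZZhang,CaoChen} (an LCF gradient soliton with $\lambda\ge 0$ has positive curvature operator or is a product) together with Toponogov's classical splitting to rule out positive curvature in the presence of the $s$-line. Your concavity argument for $\mu\le 0$ is correct and pleasant, but for $\mu>0$ the paper's LCF reduction is what makes the proof short.
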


\begin{proof}
The result  follows from the work of various authors.  
The main observation is that a complete metric of type (I) (see Definition \ref{def_wp}) of the form 
(\ref{eqn_g_f(t)}) contains a line in the $s$-direction: a geodesic defined on $(-\infty, \infty)$ that is minimizing 
on all its sub-segments. 

In the $m$-quasi-Einstein case,  a version of the Cheeger--Gromoll splitting theorem holds if $\lambda \geq 0$  \cite{FLZ}.    Therefore, if  $g$ is not a product it then must be 
rotationally-symmetric (i.e., type (II) or (III)).   In fact, if $\lambda>0$, $M$ must be compact \cite{Qian}.

If $(M,g)$ is a gradient Ricci soliton, we may, without loss of generality, replace  $g_N$ with
a space form of the same dimension and  with the same Einstein constant.  In particular, $(M,g)$ is now locally conformally flat.
Locally conformally flat gradient Ricci solitons with $\lambda \geq 0$ are classified \cite{CaoChen}, 
however, we do not need the entire argument in this special case.  Indeed, by the work of 
Chen \cite{Chen}  and Zhang \cite{ZZhang}  (cf. Proposition 2.4 of \cite{CaoChen})  
a locally conformally flat gradient Ricci soliton with $\lambda \geq 0$  either has positive curvature operator or is a product.  
However, by the classical splitting theorem of Toponogov, a space with positive curvature cannot contain a 
line, so a type (I) gradient Ricci soliton with $\lambda \geq 0$ must be a product. 
\end{proof}

We now prove our main result on Ricci solitons stated in the introduction.
\begin{proof}[Proof of Theorem \ref{thm_solitons}]

The first  claim  that $g_1$ and $g_2$ are metrics of the form (\ref{eqn_g_f(t)}) follows from Theorem \ref{thm_local_classification} and Corollary \ref{cor_soliton_f(t)}.

When $g_1$ is a complete shrinking or steady soliton, we also know from Proposition \ref{prop_nosplitting} that $g_1$ is either 
rotationally-symmetric (on $\mathbb{R}^n$ or $S^n$) or a product.  From the work of 
Kotschwar \cite{Kotschwar08} and Bryant \cite{Bryant}, the only complete  rotationally-symmetric gradient 
Ricci solitons with $\lambda_1 \geq 0$ are the round sphere, flat $\mathbb{R}^n$,  and the Bryant soliton.  In the flat case 
there are two  rotationally-symmetric gradient Ricci soliton structures with $f=f(s)$  on $g_1= ds^2 + s^2 g_{S^{n-1}}$:  
one where  $f$ is constant  and $\lambda_1 = 0$ and the other where $f$ is the Gaussian density,  
$f = \frac{\lambda_1}{2} s^2 + b$.   We will refer to the solitons in the first case as  flat Euclidean solitons 
and to the second case as  flat Gaussian solitons. 

   If $g_1$ is a product 
$\mathbb{R} \times N$ and $f_1=f_1(t)$, we have that 
$\mathrm{Hess}  f_1 = f_1'' dt^2$, so that $g_N$ must be Einstein.    By \cite{Ivey} any non-trivial compact gradient 
Ricci soliton is shrinking, so the compact result follows from the complete one. (In the trivial case
in which $f_1$ is constant, we can appeal to Theorem \ref{thm_compact_case}.)

The next claim, that if $g_2$ is also a soliton, then both spaces are round spheres or $\phi$ is stereographic projection, 
appears at the end of the section as Corollary \ref{cor_solitons}.

Finally, we prove that a complete gradient Ricci soliton $(M,g,f)$ admitting a 
non-homothetic, structure-preserving conformal field $V$ is Einstein with $f$ constant.
If $M$ is compact, then the first part of the proof, applied to the flow of $V$, implies that $f$ is constant.  
Thus, we assume $M$ is non-compact and $f$ is non-constant and appeal to the classification
derived in section \ref{sec_conf_fields_f(t)}. Since we consider a complete Ricci soliton, only the $f=f(t)$ case occurs by Corollary \ref{cor_soliton_f(t)}.
There are a couple cases to consider, in which $f$ and $\sigma$ are known explicitly.

Suppose $\omega=0$. By translating $t$ and rescaling $\sigma$, we have that 
$\sigma(t) = \frac{1}{\kappa}\sinh(\kappa t)$,
$f' = \frac{c}{A \cosh(\kappa t)}$ for nonzero constants $\kappa,c$ and $A$, and 
\[h = dt^2 + \cosh^2(\kappa t) g_N.\]
We compute $\mathrm{Ric}_h\left(\frac{\partial}{\partial t},\frac{\partial}{\partial t}\right)$ 
using both (\ref{eqn_Ric_conformal}) and (\ref{eqn_Ric_h_WP}) to show that
\[\lambda = e^{-\frac{2f}{n-2}} \left( -\kappa^2(n-1) - \frac{1}{n-2}(f')^2\right)- \frac{1}{n-2} (\Delta_g f - |\nabla f|^2_g),\]
where $g=e^{\frac{2f}{n-2}} h$ satisfies $\mathrm{Ric}_g + \mathrm{Hess}_g f = \lambda g$.
Next, using the conformal relation between $g$ and $h$, we find
\[\Delta_g f - |\nabla f|^2_g = e^{-\frac{2f}{n-2}} \Delta_h f,\]
and, by computing the Laplacian on a warped product, 
\[\Delta_h f = f'' + \frac{\kappa(n-1)f'\sinh(\kappa t)}{\cosh(\kappa t)}.\]
Thus,
\[\lambda = e^{-\frac{2f}{n-2}} \left( -\kappa^2(n-1) - \frac{1}{n-2}\left(f'' + (f')^2
+\frac{\kappa(n-1)f' \sinh(\kappa t)}{\cosh(\kappa t)}\right) \right).\]
Elementary analysis shows that $\lambda$ is non-constant.

Finally, suppose $\omega \neq 0$.  In this case, $g_N$ admits a homothetic field that is not Killing,
so $g_N$ is flat. Working in the variable $r = \int_0^t \frac{dt}{\sigma'(t)}$, we have
\[h = dt^2 + \sigma'(t)^2g_N = \sigma'(t)^2 (dr^2 + g_N).\]
The metric $g$ is given by
\[g=e^{\frac{2f(r)}{n-2}} \sigma'(r)(dr^2 + g_N).\]
Let $\varphi = \frac{f}{n-2} + \frac{1}{2} \log \sigma'(r)$, so
that $g=e^{2\varphi} (dr^2+g_N)$.
We use this conformal relation to find:
\begin{align*}
\mathrm{Ric}_g\left(\frac{\partial}{\partial r}, \frac{\partial}{\partial r}\right) &= -(n-1)\varphi''\\
\mathrm{Hess}_gf \left(\frac{\partial}{\partial r},\frac{\partial}{\partial r}\right) &= f''-\varphi'f',
\end{align*}
where all derivatives are with respect to $r$. In particular, if $\mathrm{Ric}_g + \mathrm{Hess}_g f=\lambda g$, then
\[\lambda =-(n-1)\varphi'' + f''-\varphi'f'.\]
Using $f'(r)=\frac{c}{r}$, straightforward computations show
\[\lambda = -\frac{n-1}{2}\left(\frac{\sigma'''}{\sigma'} - \frac{(\sigma'')^2}{(\sigma')^2}\right)
      -\frac{c\sigma''}{2r\sigma'} - \frac{c(c-1)}{(n-2)r^2}.\]
If $B=0$, we have $\sigma(r)= \frac{1}{2} - \frac{1}{\ln (Cr)}$.  If
$B>0$, then $\sigma(r) = \frac{1}{2} + \sqrt{B}\tan(\sqrt{B} \ln(Cr))$.
Elementary analysis shows that $\lambda$ is not constant in either case.

We conclude that $f$ must in fact be constant, so that $(M,g)$ is Einstein.  
\end{proof}

\begin{remark}
\label{rmk_kanai}
As an addendum to the proof of the last statement: Kanai showed a complete Einstein space admitting a non-homothetic 
conformal field belongs to the 
following list, up to rescaling \cite{Kanai} (cf. Theorem 2.7 of \cite{KR09}): a round sphere, Euclidean space, 
hyperbolic space, a warped product $ds^2 + e^{2s}g_N$ (where $N$ is Ricci-flat), or a warped product 
$ds^2 + \cosh^2(s) g_N$ (where $N$ has Einstein constant $-(n-2)$).
\end{remark}

We close with examples of Ricci solitons $g_1$ admitting structure-preserving conformal changes
to GQE metrics $g_2$, making use of Corollary \ref{cor_GQE_conformal}.  
These examples will be used in the proof of Corollary \ref{cor_solitons}.
\begin{example} [Product soliton]
We consider the case in which $g_1 = ds^2 + g_N$, where $g_N$ is Einstein with Einstein constant $\lambda_1$.  
$f_1=f_1(s)$ is necessarily of the form 
\[ f_1(s) = \frac{\lambda_1}{2} s^2 + a s + b, \]
for some constants $a$ and $b$.  Assume $f_1$ is not constant.

First we consider the case $\lambda_1 = 0$.  By adding a constant to $f_1$, we may assume $f_1(s) = as$, $a\neq 0$. Then we have
\begin{equation*}
h_1 = e^{\frac{-2as}{n-2}} g_1 = dt^2 + e^{\frac{-2as}{n-2}}  g_N, 
\end{equation*}
where $dt =e^{\frac{-as}{n-2}} ds$. 
$u$ is a solution to $\frac{du}{dt} = e^{\frac{-as}{n-2}}$, which implies that 
$\frac{du}{ds} = e^{\frac{-2as}{n-2}}$.
So $u(s) = -\frac{n-2}{2a}e^{\frac{-2as}{n-2}} + C$ and  we have 
\begin{equation*}
g_2  = K u^{-2} g_1= K \left( d\tau^2  + u^{-2} g_N \right), 
\end{equation*}
where $K$ is a positive constant and $\tau(s) = \int u(s)^{-1} ds$. 

If $a$ and $C$ have different signs, then $|u|>0$ for all $s$, giving a global conformal change $g_2 = Ku^{-2} g_1$.
Note that $\tau$ is always either bounded above or below (depending on the sign of $a$), so $g_2$ is not complete.  
If $a$ and $C$ have the same sign, then $u$ has a zero, and the conformal change is not global.

In the case $\lambda_1 \neq 0$, by shifting $s$ and adding a constant to $f_1$ we can assume that 
$f_1(s) = \frac{\lambda_1}{2} s^2$.  Then we have 
\begin{equation*}
h_1 = e^{\frac{-\lambda_1 s^2}{n-2}} g_1 = dt^2 + e^{\frac{-\lambda_1 s^2}{n-2}}  g_N,
\end{equation*}
where $dt =e^{\frac{-\lambda_1 s^2}{2(n-2)}} ds$. 
We have  $\frac{du}{ds} = e^{\frac{-\lambda_1 s^2}{n-2}}$, 
so that  $u(s) = C + \int_0^s e^{\frac{-\lambda_1 p^2}{n-2}} dp $ 
and  $g_2 =  K u^{-2} g_1$.  By Corollary \ref{cor_GQE_conformal}, $g_2$ is a gradient Ricci almost soliton with 
potential $f=f_1$.

Note that when $\lambda_1>0$, $u(s)$ is bounded  which implies that we can choose $C$ large enough so that $u$ does not 
vanish and that  $g_2$ is complete if $g_1$ is.  When $\lambda_1 <0$, $u$ will always have a zero, so there is 
no global conformal change. 

In the case $\lambda_1 \geq 0$, we point out that $(g_2,f)$ is not a gradient
Ricci soliton.  To see this, we note $\mathrm{Hess}_{g_1} u = u''(s) ds^2$ and
$\Delta_{g_1} u = u''(s)$ and compute
(where prime denotes a derivative with respect to $s$):
\[\mathrm{Ric}_{g_2} = (n-1)\left(\frac{u''}{u} -
\frac{(u')^2}{u^2}\right)ds^2 + \left(\lambda_1 + \frac{u''}{u} -
(n-1)\frac{(u')^2}{u^2}\right)g_N\]
and
\[\mathrm{Hess}_{g_2} f = \left(f'' + \frac{f'u'}{u}\right) ds^2 - \frac{f' u'}{u} g_N.\]
In order for $\mathrm{Ric}_{g_2} + \mathrm{Hess}_{g_2} f$ to equal
$\lambda_2 g_2$, we must have:
\[\lambda_2=(n-1)\left(uu'' - (u')^2\right) + \left(\lambda_1 u^2 + \lambda_1 s
uu'\right)\]
in the case $\lambda_1 > 0$, and 
\[\lambda_2=(n-1)\left(uu'' - (u')^2\right) + auu'\]
in the case $\lambda_1 = 0$.  However, in either case, one can explicitly show that $\lambda_2$ is not constant.
\end{example}

\begin{example}[Bryant Soliton]
The Bryant soliton is the unique (up to rescaling) rotationally-symmetric, steady, gradient Ricci soliton.  We write 
this metric as 
\[ g_1 = ds^2 + w(s)^2 g_{S^{n-1}}\]
for $s \geq 0$, where $w(0)=0$, $w'(0)=1$, and $w(s)>0$ for $s>0$.  We also have $w = O(s^{1/2})$, $w' = O(s^{-1/2})$, $w'' = O(s^{-3/2})$,
 the scalar curvature $R$ is $O(s^{-1})$ for $s$ large, and the sectional curvature is everywhere positive
 (see \cite{Bryant} or Chapter 1, section 4 of \cite{Chowetc}).  From (\ref{Sol_int}) we have 
\[ R + |\nabla f|^2 = c. \]
for some positive constant $c$. Thus, $f' \rightarrow \pm \sqrt{c}$ at infinity, so  $f = O(s)$.  Since $g_1$ has positive curvature,  $\mathrm{Hess} f$ is negative-definite
and we conclude $f' \rightarrow -\sqrt{c}$ at infinity. 
Now,
\[ h_1 = dt^2 + \left( e^{\frac{-f}{n-2}} w\right)^2 g_{S^{n-1}},\]
where $\frac{dt}{ds} = e^{-\frac{f}{n-2}}$, so that 
\[ u(s) = C + \int_0^s e^{\frac{-2f(p)}{n-2}} w(p) dp.\]
Since $s\geq 0$, from the asymptotics of $f$ and $w$ we see that $u$ blows up exponentially in $s$.
Thus we have a global conformal change to an incomplete metric $g_2$, provided $C> 0$.

Finally, we ask whether $g_2=u^{-2} g_1$ is also a Ricci soliton with potential $f$.
Assume $\mathrm{Ric}_{g_2} + \mathrm{Hess}_{g_2} f = \lambda_2 g_2$.  Direct calculation of the $ds^2$ component
of this equation leads to:
\begin{equation} \label{eqn:Rotsym} (n-1)\left(\frac{u''}{u}-\frac{w''}{w}+\frac{u'w'}{uw}-\frac{(u')^2}{u^2}\right) + f'' + \frac{f'u'}{u} = \lambda_2 u^{-2},\end{equation}
where all derivatives are with respect to $s$. Using $\mathrm{Ric}_{g_1} + \mathrm{Hess}_{g_1} f = 0$, we have
$f'' = (n-1)\frac{w''}{w}$.  This simplification leads to:
\[(n-1)(uu''+ uu'w'w^{-1} - (u')^2) + uu'f' = \lambda_2.\]
We show $\lambda_2$ is not constant by examining its asymptotics at $s=0$ and $s \to \infty$.  As $s \to 0^+$:
$f$ limits to $0$ (without loss of generality), $f'$ limits to $0$, $u$ limits to $C$, $u'$ limits to zero, and $u''$ limits to 
$1$.  This implies $\lim_{s \to 0^+} \nu(s) = 2C(n-1)$.
On the other hand, by the above asymptotics on $w$ and $f$, one can show that $\frac{u''}{u}$ and $\frac{u'}{u}$
limit to $\frac{4c}{(n-2)^2}$ and $\frac{2\sqrt{c}}{n-2}$ at infinity, respectively.  It follows that
$\lim_{s \to \infty} \nu(s) u(s)^{-2} = \frac{-2c}{n-2}$,
so that $\lambda_2(s) \to -\infty$ as $s \to \infty$.  In particular, $\lambda_2$ is not constant.
\end{example}

\begin{example}[Flat Gaussian Soliton]
There is one more example of rotationally-symmetric shrinking soliton: the flat Gaussian. In this case the metric is flat 
$\mathbb{R}^n$ written in polar coordinates as 
\[ g_1 = ds^2 + s^2 g_{S^{n-1}} \]
with $f = \frac{\lambda_1}{2}s^2 + b$, $\lambda_1 \neq 0$.  Without loss of generality we assume $b=0$. Then we have 
\[ h_1 = dt^2 + \left( e^{\frac{-\lambda_1 s^2}{2(n-2)}} s\right)^2 g_{S^{n-1}},\]
where $\frac{dt}{ds} = e^{\frac{-\lambda_1 s^2}{2(n-2)}}$, so  $\frac{du}{ds} = se^{\frac{-\lambda_1 s^2}{n-2}}$ and thus
\[ u(s) = C - \frac{(n-2)}{2\lambda_1} e^{\frac{-\lambda_1 s^2}{n-2}}.\]

Considering $g_2 = u^{-2} g_1$, note that when $\lambda_1>0$, $u(s)$ is bounded  which implies that we can choose $C$ large enough so that $u$ does not 
vanish and that $g_2$ is complete.  When $\lambda_1 <0$ and $C>0$ we also obtain a global conformal change, however $g_2$ will be incomplete.  

Finally we determine whether $g_2$ is a Ricci soliton. We know that $\mathrm{Ric}_{g_2} + \mathrm{Hess}_{g_2} f= \lambda_2 g_2$ for a function $\lambda_2$.   Arguing as in the Bryant soliton  example, by equation (\ref{eqn:Rotsym}) we obtain  
\[ (n-1)\left( uu'' + u'u s^{-1} - (u')^2 \right) + \lambda_1 + \lambda_1 s u'u = \lambda_2.\]
Then one can explicitly show that $\lambda_2$ is not constant in this case as well. 

\end{example}

Finally, we prove the following corollary, which completes the proof of Theorem \ref{thm_solitons}.
\begin{corollary}
\label{cor_solitons}
Let $\phi$ be a non-homothetic conformal diffeomorphism between Ricci solitons
$(M_1,g_1,f_1)$ and $(M_2,g_2,f_2)$ such that $\phi^* df_2 = df_1$.  If $(M_1,g_1)$ is complete and either shrinking or steady,
then $f_1$ and $f_2$ are constant, and  either $(M_1, g_1)$ and $(M_2, g_2)$ are both isometric to round spheres, 
or $\phi$ is a stereographic projection  with $(M_1,g_1)$ flat Euclidean space and  $(M_2,g_2)$  a 
round spherical metric with a point removed.
\end{corollary}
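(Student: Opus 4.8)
The plan is to reduce everything to Theorem \ref{thm_solitons} and then eliminate all but the two cases in the statement. Since $\alpha_1\equiv\alpha_2\equiv 0$, the hypothesis $\phi^*df_2=df_1$ says exactly that $\phi$ is a non-homothetic structure-preserving conformal diffeomorphism, so Theorem \ref{thm_solitons} applies: $g_1$ and $g_2$ are of the form (\ref{eqn_g_f(t)}), and $(M_1,g_1,f_1)$ is a round metric on $S^n$, the flat metric on $\mathbb{R}^n$ (with $f_1$ either constant or a Gaussian density), the Bryant soliton, or a product $\mathbb{R}\times N$ with $N$ Einstein of constant $\lambda_1$. By Corollary \ref{cor_soliton_f(t)} we are in the $f=f(t)$ case, and, inspecting the proof of Theorem \ref{thm_local_classification} (and Lemma \ref{lemma_local_form_h}), the conformal factor relating $g_1$ and $g_2$ is a constant multiple of $u^{-2}$, where $u$ is an antiderivative of the warping function of $h_1=e^{-\frac{2f_1}{n-2}}g_1$; moreover $u$ must be nowhere zero on $M_1$, because $\phi$ is a genuine diffeomorphism with positive conformal factor.

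The first step is to rule out every case in which $f_1$ is non-constant. For the Bryant soliton, for the flat Gaussian soliton, and for the product $\mathbb{R}\times N$ with $f_1$ non-constant, the metric $g_2=Ku^{-2}g_1$ is precisely the one computed in the corresponding example of Section \ref{sec_solitons}, where it is shown that the associated $\lambda_2$ is non-constant; this contradicts $(M_2,g_2,f_2)$ being a genuine gradient Ricci soliton. In the product case with $f_1$ non-constant and $\lambda_1=0$ one also checks that the sign of the integration constant making $u$ nowhere zero is forced, so the example indeed applies. For the one remaining non-Einstein possibility, the product $\mathbb{R}\times N$ with $f_1$ constant, $h_1$ is a constant multiple of $g_1=ds^2+g_N$, so $u$ is affine in $s$ and therefore vanishes somewhere, contradicting that $u$ is nowhere zero on $M_1=\mathbb{R}\times N$. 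Hence $f_1$ is constant, $(M_1,g_1)$ is the round sphere or flat Euclidean space, and $\phi^*df_2=df_1=0$ forces $f_2$ constant as well.

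It then remains to analyze the two surviving cases. If $g_1$ is a round metric on $S^n$, then $M_1$, hence $M_2$, is compact, and $\phi$ is a non-homothetic conformal diffeomorphism between the Einstein metrics $g_1$ and $g_2$; Theorem \ref{thm_compact_case} forces both to be rotationally-symmetric metrics on $S^n$, and a rotationally-symmetric Einstein metric on $S^n$ is round, so both $(M_i,g_i)$ are isometric to round spheres. If instead $g_1$ is flat Euclidean space, write $h_1=c\,g_1=dt^2+t^2 g_{S^{n-1}}$ in polar coordinates, so $u(t)=\frac{t^2}{2}+C$ up to a constant factor; the nowhere-zero condition forces $C>0$, and then $g_2=Ku^{-2}g_1=K\left(\frac{t^2}{2}+C\right)^{-2}(dt^2+t^2 g_{S^{n-1}})$ is a round spherical metric with the point at infinity removed, with $\phi$ the corresponding stereographic projection (cf.\ Example \ref{ex_sphere}); this is the second alternative in the statement, completing the argument.

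The step I expect to be the main obstacle is the bookkeeping in the elimination step: one must be sure that in each case from the list of Theorem \ref{thm_solitons} the partner metric $g_2$ is forced to be exactly the rescaling $Ku^{-2}g_1$ analyzed in the relevant example (matching the antiderivative $u$ and the constant $K$ up to the freedom allowed by $\phi$), and that the ``nowhere-zero $u$'' constraint coming from $\phi$ being a diffeomorphism of $M_1$ is invoked correctly to discard the degenerate sub-cases. The computations that $\lambda_2$ is non-constant are already carried out in the examples of Section \ref{sec_solitons}, so no new analysis is needed there.
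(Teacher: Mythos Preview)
Your proof is correct and follows essentially the same approach as the paper: reduce to the list from Theorem \ref{thm_solitons}, eliminate the non-trivial soliton cases via the examples in Section \ref{sec_solitons}, then handle the remaining Einstein cases (sphere and flat Euclidean space) directly. One minor slip: the product $\mathbb{R}\times N$ with $f_1$ constant is Ricci-flat and hence Einstein, not ``non-Einstein'' as you label it, but your affine-$u$ argument for that sub-case is correct and in fact makes explicit a point the paper's proof leaves implicit.
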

\begin{proof}
By Theorem \ref{thm_solitons}, we have that $(M_1, g_1, f_1)$ is a product of $\mathbb{R}$ with an Einstein manifold,
the Bryant soliton, a flat Gaussian soliton, a flat Euclidean space, or a round sphere.  However, the previous examples 
show the conformal transformations associated to the first three cases do not produce a soliton metric.  In the last two 
cases, $f$ is constant so we are in the Einstein case.  From Example \ref{ex_sphere} we can see the only time we have a 
global non-homothetic conformal diffeomorphism  from  a round spherical metric $g_1$ to another Einstein metric $g_2$ is 
when $g_2$ is also a round spherical metric.  A similar analysis shows that the only time we have a global non-homothetic 
conformal diffeomorphism from flat Euclidean space $g_1$ to another Einstein metric is the case of stereographic 
projection where $g_2$ is a round spherical metric with a point removed. 
\end{proof}

\appendix

\section{Conformal Fields on warped products over a one-dimensional base}
\label{app_conf_fields}
Here we collect some calculations for conformal fields of a Riemannian metric $h$ of the form 
\[ h = dt^2 + u(t)^2 g_N\]
on $M=I \times N$, where $I$ is an open interval. Let $V$ be a vector field on $M$.
We  write
\[ V = v_0(t, x) \frac{\partial}{\partial t} +V_t, \] 
where $v_0$ is a function on $M$ and $V_t$ is the projection of $V$ onto the factor $\{t\} \times N$.    
We have the following necessary and sufficient conditions for $V$ to be a conformal field  for $h$.

\begin{proposition}
\label{prop_conf_fields_app}
$V$ is a conformal field for $h$, 
\[ L_V h = 2 \sigma h,\]
if and only if 
\begin{enumerate}
\item $V_t$ is a conformal field for $g_N$ for each $t$: $L_{V_t} g_N = 2 \omega_t g_N$. 
\item  $ \frac{\partial}{\partial t}(v_0 u^{-1})  = \omega_t u^{-1}$
\item $\frac{\partial V_t}{\partial t} = -u^{-2} \nabla^N v_0$,
\end{enumerate}
where $\nabla^N v_0$ is the gradient of $v_0(t,\cdot)$ on $\{t\}\times N$.
Moreover $\sigma= v_0u^{-1} \frac{\partial u}{\partial t}  + \omega_t = \frac{\partial v_0}{\partial t}.$
\end{proposition}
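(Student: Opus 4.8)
The plan is to compute $L_V h$ directly, in a product frame adapted to $M=I\times N$, and match it against $2\sigma h$ one block at a time. First I would choose local coordinates $x=(x^1,\dots,x^{n-1})$ on $N$ and regard the coordinate fields $e_a=\partial/\partial x^a$ as $t$-independent vector fields on $M$; then $[\partial_t,e_a]=0$ and $dt(e_a)=0$, and since $V_t$ has no $\partial_t$-component, $\iota_{V_t}dt\equiv 0$, so the local flow of $V_t$ preserves the slices $\{t\}\times N$. Writing $V=v_0\,\partial_t+V_t$ with $V_t=V^a e_a$ (the $V^a$ depending on $t$ and $x$) and using $L_V h=L_{v_0\partial_t}h+L_{V_t}h$ together with the Cartan/Leibniz identities (in particular $L_X(dt)=d(\iota_X dt)$, $[v_0\partial_t,e_a]=-(e_a v_0)\partial_t$, and $[V_t,\partial_t]=-(\partial_t V^a)e_a$), the whole computation reduces to a short, routine calculation.

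Carrying this out produces three blocks. The $(\partial_t,\partial_t)$ block gives $(L_V h)(\partial_t,\partial_t)=2\,\partial_t v_0$, so $L_V h=2\sigma h$ forces $\sigma=\partial_t v_0$. The mixed block gives $(L_V h)(\partial_t,e_a)=e_a v_0+u^2 g_N(\partial_t V_t,e_a)$, whose vanishing is exactly condition (3), $\partial_t V_t=-u^{-2}\nabla^N v_0$. The spatial block gives $(L_V h)(e_a,e_b)=2uu' v_0\,(g_N)_{ab}+u^2(L_{V_t}g_N)_{ab}$, where the second term is the intrinsic Lie derivative of $g_N$ along the vector field $V_t|_{\{t\}\times N}$ on $N$ (this is where slice-preservation is used); comparing with $2\sigma u^2(g_N)_{ab}$ shows $L_{V_t}g_N$ is pointwise proportional to $g_N$, i.e.\ $V_t$ is a conformal field on $N$ with factor $\omega_t:=\sigma-(u'/u)v_0$ — this is condition (1), and it records $\sigma=(u'/u)v_0+\omega_t$. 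Finally, equating the two expressions $\partial_t v_0=\sigma=(u'/u)v_0+\omega_t$ and using $\partial_t(v_0 u^{-1})=u^{-1}\big(\partial_t v_0-(u'/u)v_0\big)$ yields condition (2), $\partial_t(v_0 u^{-1})=\omega_t u^{-1}$. Conversely, given (1)--(3) one reads the three blocks backwards to recover $L_V h=2\sigma h$ with $\sigma$ as stated, and the two displayed formulas for $\sigma$ are precisely the ``Moreover'' claim.

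I expect the only genuine subtlety to be the bookkeeping around the $t$-dependence of $V_t$: $\partial_t$-derivatives of its spatial components appear both in the mixed block and inside the commutator $[V_t,\partial_t]$, and one must be sure that the tensor appearing in the spatial block is really the Lie derivative of $g_N$ on $N$ and not some larger object on $M$. Isolating this via the observation that $V_t$ is tangent to the slices (hence $\iota_{V_t}dt\equiv0$ and its flow is slice-preserving) is what makes the three blocks decouple cleanly; everything else is a direct application of the standard formulas.
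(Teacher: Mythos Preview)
Your proposal is correct and follows essentially the same approach as the paper: both compute $L_Vh$ block-by-block in product coordinates on $I\times N$ and read off conditions (1)--(3) from the $(\partial_t,\partial_t)$, mixed, and spatial components. The only cosmetic difference is that you split $L_Vh=L_{v_0\partial_t}h+L_{V_t}h$ first and invoke Cartan/Leibniz identities, whereas the paper applies the formula $(L_Vh)(X,Y)=D_Vh(X,Y)-h([V,X],Y)-h(X,[V,Y])$ directly to the full $V$; the resulting three blocks and the derivation of (2) from the two expressions for $\sigma$ are identical.
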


\begin{proof} We compute the Lie derivative of $h$.  Let $(x^1, \ldots, x^{n-1})$ be normal coordinates at some 
$p \in N$, and let $V_t = v_i(t,x) \partial_i$, with the Einstein summation convention in effect for $i=1$ to $n-1$.
Here, $\partial_i = \frac{\partial}{\partial x^i}$ and we let $\partial_t = \frac{\partial}{\partial t}$. 
To begin, we record the following Lie brackets:
\begin{align*}
[V, \partial_t] &= -\frac{\partial v_0}{\partial t} \partial_t - \frac{\partial v_i}{\partial t} \partial_i\\
[V, \partial_j] &= -\frac{\partial v_0}{\partial x^j} \partial_t - \frac{\partial v_i}{\partial x^j} \partial_i.
\end{align*}
Now, at the point $(t,p)$,
\begin{align*}
(L_V h) (\partial_t, \partial_t) &= D_V h(\partial_t, \partial_t) -2 h([V,\partial_t],\partial_t)\\
  &= 2 \frac{\partial v_0}{\partial_t},\\
(L_V h) (\partial_t, \partial_j) &= D_V h(\partial_t, \partial_j)- h([V,\partial_t], \partial_j) - h(\partial_t, [V, \partial_j])\\
  &= u^2 \frac{\partial v_j}{\partial t} + \frac{\partial v_0}{\partial x^j},\\
(L_V h) (\partial_j, \partial_k) &= D_V h(\partial_j, \partial_k)- h([V,\partial_j], \partial_k) - h(\partial_j, [V, \partial_k])\\
  &= 2v_0 uu' \delta_{jk}+ u^2\left(\frac{\partial v_k}{\partial x^j}+\frac{\partial v_j}{\partial x^k} \right)\\
  &= 2v_0 uu' g_N(\partial_j, \partial_k)+u^2 (L_{V_t} g_N) (\partial_j, \partial_k).
\end{align*}
In particular, for arbitrary vector fields $X,Y$ tangent to $\{t\} \times N$,
\begin{align*}
(L_V h) (\partial_t, X) &= u^2 g_N\left(X, \frac{\partial V_t}{\partial t}\right) + D_X v_0,\\
(L_V h) (X,Y) &= 2v_0 uu' g_N(X,Y)+u^2 (L_{V_t} g_N) (X,Y)
\end{align*}
Then $L_V h$ equals $2 \sigma h$ if and only if
\begin{align*}
 \sigma &=  \frac{\partial v_0}{\partial t},\\
u^2 \frac{\partial V_t}{\partial t} &= - \nabla^N v_0,\\
L_{V_t} g_N &= 2 \omega_t g_N,
\end{align*}
where $\omega_t := \sigma - v_0 u^{-1} u'$.  The first equation is equivalent to:
\[ \frac{\partial}{\partial t}(v_0 u^{-1})  = \omega u^{-1}.\]
\end{proof}
Two consequences of this result are the following. 

\begin{corollary} \label{cor_v_0(t)}
If $V$ is a conformal field for $h$ as above, then $v_0 = v_0(t)$ if and only if $V_t$ is a fixed homothetic vector field for $g_N$. 
\end{corollary}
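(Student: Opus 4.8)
The plan is to deduce everything directly from the three equivalent conditions recorded in Proposition \ref{prop_conf_fields_app}, together with the accompanying identity $\sigma = v_0 u^{-1} u' + \omega_t = \partial_t v_0$. Both implications are short once those conditions are in hand; the only point that requires a moment's care is extracting \emph{constancy} (not merely $t$-independence) of the conformal factor $\omega_t$ in the forward direction. Throughout I use that $u = u(t)$ is nowhere zero, since $h = dt^2 + u(t)^2 g_N$ is a genuine Riemannian metric on $I \times N$.

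For the forward direction I would assume $v_0 = v_0(t)$, so that $\nabla^N v_0 \equiv 0$. Condition (3) of Proposition \ref{prop_conf_fields_app} then gives $\partial_t V_t = 0$, i.e. $V_t = V_0$ is a single vector field on $N$ independent of $t$. Since $V_0$ and $g_N$ do not depend on $t$, condition (1) forces the conformal factor $\omega_t$ of $V_0$ to be independent of $t$ as well; call it $\omega$, a function on $N$. Now condition (2), $\partial_t(v_0 u^{-1}) = \omega u^{-1}$, exhibits $\omega = u\,\partial_t(v_0 u^{-1})$ as a function of $t$ alone. A function that is simultaneously a function of $t$ only and a function on $N$ only is constant, so $\omega$ is constant and $V_0$ is a fixed homothetic field for $g_N$.

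For the converse I would assume $V_t = V_0$ is a fixed homothetic field for $g_N$; in particular it is independent of $t$, so $\partial_t V_t = 0$. Condition (3) then reads $0 = -u^{-2}\nabla^N v_0$, and since $u$ is nowhere zero this forces $\nabla^N v_0 \equiv 0$, i.e. $v_0$ is constant on each slice $\{t\}\times N$; that is, $v_0 = v_0(t)$.

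The main (and very mild) obstacle is the observation in the forward direction that $\omega_t$ must be constant in the $N$-variable as well as in $t$; this is the one place where one genuinely combines conditions (1) and (2) rather than invoking either alone. Everything else is an immediate consequence of Proposition \ref{prop_conf_fields_app}.
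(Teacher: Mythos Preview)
Your proof is correct and follows essentially the same approach as the paper: use condition (3) to establish the equivalence between $v_0=v_0(t)$ and $t$-independence of $V_t$, then use (2) to pin down constancy of $\omega$. You are actually more careful than the paper's two-line argument, since you make explicit the role of condition (1) in showing $\omega_t$ is $t$-independent before invoking (2) to show it is $x$-independent; the paper's proof leaves that step implicit.
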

\begin{proof}
Equation (3) of the previous proposition shows that $v_0=v_0(t)$ if and only if $V_t$ is independent of $t$.  In this case, (2) implies that $\omega$ is constant. 
\end{proof}
In fact, we can solve for $v_0$ and $\sigma$ explicitly. 
\begin{corollary} \label{cor_fields_integrated_eqns} 
With notation as above, 
\begin{eqnarray*}
v_0 &=& u(t)  \left(A(x) + \int \frac{\omega_t(x)}{u(t)} dt \right)\\
\sigma &=&u'(t)\left(A(x) + \int \frac{\omega_t(x)}{u(t)} dt\right) + \omega_t(x) 
\end{eqnarray*}
where $A(x)$ is a function on $N$.
\end{corollary}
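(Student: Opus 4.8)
The plan is to integrate the conditions of Proposition \ref{prop_conf_fields_app} in the $t$-variable, treating the point $x \in N$ as a parameter. Condition (2) of that proposition states that
\[ \frac{\partial}{\partial t}\left( v_0 u^{-1} \right) = \omega_t\, u^{-1}. \]
For each fixed $x$, this is an ordinary differential equation in $t$ for the function $t \mapsto v_0(t,x)\,u(t)^{-1}$, whose right-hand side $\omega_t(x)\,u(t)^{-1}$ is a known function of $t$. Integrating in $t$ therefore yields
\[ v_0(t,x)\,u(t)^{-1} = A(x) + \int \frac{\omega_t(x)}{u(t)}\, dt, \]
where $\int \frac{\omega_t(x)}{u(t)}\,dt$ denotes any fixed choice of antiderivative in $t$ and $A(x)$ is the resulting ``constant'' of integration, which a priori depends on $x$ (and is smooth in $x$, since $v_0$, $u$, and $\omega_t$ are). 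Solving for $v_0$ gives the first displayed formula.

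Next I would substitute this expression into the identity $\sigma = v_0 u^{-1} u' + \omega_t$, also recorded in Proposition \ref{prop_conf_fields_app}. This immediately produces
\[ \sigma = u'(t)\left( A(x) + \int \frac{\omega_t(x)}{u(t)}\,dt \right) + \omega_t(x), \]
which is the second formula. Since the alternate identity $\sigma = \partial v_0/\partial t$ is equivalent to condition (2) (as used in the proof of Proposition \ref{prop_conf_fields_app}), no further consistency check is required.

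There is no serious obstacle here: the corollary is a direct restatement of Proposition \ref{prop_conf_fields_app} after performing a single integration. The only points worth flagging in the write-up are the bookkeeping of the arbitrary function $A(x)$ — that different choices of antiderivative in $t$ merely shift $A(x)$, and that smoothness of $A$ follows from smoothness of the remaining data — and the observation that conditions (1) and (3) of the proposition are not needed for these particular formulas, as they constrain $\omega_t$ and the fiber part $V_t$ rather than the $\partial/\partial t$-component of $V$.
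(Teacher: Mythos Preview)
Your proposal is correct and follows essentially the same approach as the paper: integrate condition (2) of Proposition \ref{prop_conf_fields_app} in $t$ to obtain $v_0$, then read off $\sigma$. The only cosmetic difference is that the paper derives the formula for $\sigma$ from the identity $\sigma = \partial v_0/\partial t$ rather than from $\sigma = v_0 u^{-1} u' + \omega_t$, but as you note these are equivalent via condition (2), so the arguments are interchangeable.
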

\begin{proof} 
Integrating equation (2) of the proposition with respect to $t$ gives the formula for $v_0$.  The formula for $\sigma$ follows from 
$\sigma = \frac{\partial v_0}{\partial t}.$
\end{proof}

\begin{bibdiv}

\begin{biblist}
 
 \bib{Boehm}{article}{
   author={B{\"o}hm, Christoph},
   title={Inhomogeneous Einstein metrics on low-dimensional spheres and
   other low-dimensional spaces},
   journal={Invent. Math.},
   volume={134},
   date={1998},
   number={1},
   pages={145--176},
}

\bib{Brinkmann}{article}{
   author={Brinkmann, H. W.},
   title={Einstein spaces which are mapped conformally on each other},
   journal={Math. Ann.},
   volume={94},
   date={1925},
   number={1},
   pages={119--145}
}

\bib{Bryant}{article}{
  author={Bryant, Robert},
  title={Ricci flow solitons in dimension three with SO(3)-symmetries},
  eprint={http://www.math.duke.edu/~bryant/3DRotSymRicciSolitons.pdf}
}

\bib{CaoChen}{article}{
   author={Cao, Huai-Dong},
   author={Chen, Qiang},
   title={On locally conformally flat gradient steady Ricci solitons},
   journal={Trans. Amer. Math. Soc.},
   volume={364},
   date={2012},
   number={5},
   pages={2377--2391}
}

\bib{Case_nonexistence}{article}{
   author={Case, Jeffrey S.},
   title={The nonexistence of quasi-Einstein metrics},
   journal={Pacific J. Math.},
   volume={248},
   date={2010},
   number={2},
   pages={277--284}
}

\bib{CSW}{article}{
   author={Case, Jeffrey},
   author={Shu, Yu-Jen},
   author={Wei, Guofang},
   title={Rigidity of quasi-Einstein metrics},
   journal={Differential Geom. Appl.},
   volume={29},
   date={2011},
   number={1},
   pages={93--100}
}

\bib{Catino}{article}{
   author={Catino, Giovanni},
   title={Generalized quasi-Einstein manifolds with harmonic Weyl tensor},
   journal={Math. Z.},
   volume={271},
   date={2012},
   number={3-4},
   pages={751--756}
}

\bib{Chaki}{article}{
   author={Chaki, M. C.},
   title={On generalized quasi Einstein manifolds},
   journal={Publ. Math. Debrecen},
   volume={58},
   date={2001},
   number={4},
   pages={683--691}
}

\bib{Chen}{article}{
   author={Chen, Bing-Long},
   title={Strong uniqueness of the Ricci flow},
   journal={J. Differential Geom.},
   volume={82},
   date={2009},
   number={2},
   pages={363--382}
}

\bib{Chowetc}{book}{
   author={Chow, Bennett},
   author={Chu, Sun-Chin},
   author={Glickenstein, David},
   author={Guenther, Christine},
   author={Isenberg, James},
   author={Ivey, Tom},
   author={Knopf, Dan},
   author={Lu, Peng},
   author={Luo, Feng},
   author={Ni, Lei},
   title={The Ricci flow: techniques and applications. Part I},
   series={Mathematical Surveys and Monographs},
   volume={135},
   publisher={American Mathematical Society},
   place={Providence, RI},
   date={2007}
}

\bib{FLZ}{article}{
   author={Fang, Fuquan},
   author={Li, Xiang-Dong},
   author={Zhang, Zhenlei},
   title={Two generalizations of Cheeger-Gromoll splitting theorem via
   Bakry-Emery Ricci curvature},
   journal={Ann. Inst. Fourier (Grenoble)},
   volume={59},
   date={2009},
   number={2},

}

\bib{Guan}{article}{
   author={Guan, Daniel Zhuang-Dan},
   title={Quasi-Einstein metrics},
   journal={Internat. J. Math.},
   volume={6},
   date={1995},
   number={3},
   pages={371--379},
 }
 
 \bib{HPW}{article}{
   author={He, Chenxu},
   author={Petersen, Peter},
   author={Wylie, William},
   title={On the classification of warped product Einstein metrics},
   journal={Comm. Anal. Geom.},
   volume={20},
   date={2012},
   number={2},
   pages={271-312}
}

\bib{Ivey}{article} {
    AUTHOR = {Ivey, Thomas},
     TITLE = {Ricci solitons on compact three-manifolds},
   JOURNAL = {Differential Geom. Appl.},
    VOLUME = {3},
      date = {1993},
    NUMBER = {4},
     PAGES = {301--307}
}

\bib{Kanai}{article}{
   author={Kanai, Masahiko},
   title={On a differential equation characterizing a Riemannian structure
   of a manifold},
   journal={Tokyo J. Math.},
   volume={6},
   date={1983},
   number={1},
   pages={143--151}
}

\bib{KimKim}{article} {
    AUTHOR = {Kim, Dong-Soo },
    author = {Kim, Young Ho},
     TITLE = {Compact {E}instein warped product spaces with nonpositive
              scalar curvature},
   JOURNAL = {Proc. Amer. Math. Soc.},
    VOLUME = {131},
      YEAR = {2003},
    NUMBER = {8},
     PAGES = {2573--2576}
}

\bib{KN}{book}{
   author={Kobayashi, Shoshichi},
   author={Nomizu, Katsumi},
   title={Foundations of differential geometry. Vol. I},
   series={Wiley Classics Library},
   note={Reprint of the 1963 original},
   publisher={John Wiley \& Sons Inc.},
   place={New York},
   date={1996},
}
\bib{Kotschwar08}{article}{
   author={Kotschwar, Brett},
   title={On rotationally invariant shrinking Ricci solitons},
   journal={Pacific J. Math.},
   volume={236},
   date={2008},
   number={1},
   pages={73--88}
}

\bib{Kotschwar10} {article} {
    AUTHOR = {Kotschwar, Brett },
     TITLE = {Backwards uniqueness for the {R}icci flow},
   JOURNAL = {Int. Math. Res. Not. },
      YEAR = {2010},
    NUMBER = {21},
     PAGES = {4064--4097},
}

\bib{Kuhnel}{article}{
   author={K{\"u}hnel, Wolfgang},
   title={Conformal transformations between Einstein spaces},
   conference={
      title={Conformal geometry},
      address={Bonn},
      date={1985/1986},
   },
   book={
      series={Aspects Math., E12},
      publisher={Vieweg},
      place={Braunschweig},
   },
   date={1988},
   pages={105--146}
}

\bib{KR09}{article}{
   author={K{\"u}hnel, Wolfgang},
   author={Rademacher, Hans-Bert},
   title={Einstein spaces with a conformal group},
   journal={Results Math.},
   volume={56},
   date={2009},
   number={1-4},
   pages={421--444}
}

\bib{Lichnerowicz}{book}{
   author={Lichnerowicz, Andr{\'e}},
   title={G\'eom\'etrie des groupes de transformations},
   language={French},
   publisher={Travaux et Recherches Math\'ematiques, III. Dunod, Paris},
   date={1958}
}

\bib{Maschler}{article}{
   author={Maschler, Gideon},
   title={Special K\"ahler-Ricci potentials and Ricci solitons},
   journal={Ann. Global Anal. Geom.},
   volume={34},
   date={2008},
   number={4},
   pages={367--380},
}

\bib{Nagano}{article}{
   author={Nagano, Tadashi},
   title={The conformal transformation on a space with parallel Ricci
   tensor. },
   journal={J. Math. Soc. Japan},
   volume={11},
   date={1959},
   pages={10--14},

}

\bib{obata}{article}{
   author={Obata, Morio},
   title={Certain conditions for a Riemannian manifold to be isometric with
   a sphere},
   journal={J. Math. Soc. Japan},
   volume={14},
   date={1962},
   pages={333--340}
}

\bib{OsgoodStowe}{article}{
   author={Osgood, Brad},
   author={Stowe, Dennis},
   title={The Schwarzian derivative and conformal mapping of Riemannian
   manifolds},
   journal={Duke Math. J.},
   volume={67},
   date={1992},
   number={1},
   pages={57--99}
}

\bib{Petersen}{book}{
   author={Petersen, Peter},
   title={Riemannian geometry},
   series={Graduate Texts in Mathematics},
   volume={171},
   edition={2},
   publisher={Springer},
   place={New York},
   date={2006}
}

\bib{PRRS}{article}{
   author={Pigola, Stefano},
   author={Rigoli, Marco},
   author={Rimoldi, Michele},
   author={Setti, Alberto G.},
   title={Ricci almost solitons},
   journal={Ann. Sc. Norm. Super. Pisa Cl. Sci. (5)},
   volume={10},
   date={2011},
   number={4},
   pages={757--799}
}

\bib{Qian}{article}{
   author={Qian, Zhongmin},
   title={Estimates for weighted volumes and applications},
   journal={Quart. J. Math. Oxford Ser. (2)},
   volume={48},
   date={1997},
   number={190},
   pages={235--242},
}

\bib{Tashiro}{article}{
   author={Tashiro, Yoshihiro},
   title={Complete Riemannian manifolds and some vector fields},
   journal={Trans. Amer. Math. Soc.},
   volume={117},
   date={1965},
   pages={251--275}
}

\bib{YanoNagano}{article}{
   author={Yano, Kentaro},
   author={Nagano, Tadashi},
   title={Einstein spaces admitting a one-parameter group of conformal
   transformations},
   journal={Ann. of Math. (2)},
   volume={69},
   date={1959},
   pages={451--461}
}

\bib{Yokota}{article}{
   author={Yokota, Takumi},
   title={Perelman's reduced volume and a gap theorem for the Ricci flow},
   journal={Comm. Anal. Geom.},
   volume={17},
   date={2009},
   number={2},
   pages={227--263}
}

\bib{Zhang}{article}{
   author={Zhang, Shi Jin},
   title={On a sharp volume estimate for gradient Ricci solitons with scalar
   curvature bounded below},
   journal={Acta Math. Sin. (Engl. Ser.)},
   volume={27},
   date={2011},
   number={5},
   pages={871--882}
}

\bib{ZZhang}{article}{
   author={Zhang, Zhu-Hong},
   title={Gradient shrinking solitons with vanishing Weyl tensor},
   journal={Pacific J. Math.},
   volume={242},
   date={2009},
   number={1},
   pages={189--200},
}

\end{biblist}
\end{bibdiv}

\end{document}